\newtheorem{theorem}{Theorem}[section]
\newtheorem{definition}[theorem]{Definition}
\newtheorem{lemma}[theorem]{Lemma}
\newtheorem{corollary}[theorem]{Corollary}
\newtheorem{proposition}[theorem]{Proposition}
\def\worms{{\mathbb W}}
\def\gllam{\ensuremath{{\sf GLP}_{\Lambda}}}
\def\tila{C}
\def\therest{B}
\def\twoprime{D}
\def\threeprime{E}
\newcommand{\bt}{\begin{theorem}}
\newcommand{\et}{\end{theorem}}
\newcommand{\bl}{\begin{lemma}}
\newcommand{\el}{\end{lemma}}
\newenvironment{enumr}{

\begin{enumerate}     }{\end{enumerate}

}
\newcommand{\cW}{\mathcal{W}}
\newcommand{\cL}{\mathcal{L}}
\newcommand{\ga}{\alpha}
\newcommand{\gb}{\beta}
\newcommand{\gw}{\omega}
\newcommand{\mods}{\mathop{\rm mod}}
\newcommand{\glp}{{\ensuremath{\mathsf{GLP}}}\xspace}
\newcommand{\ord}{{\ensuremath{\sf{On}}}\xspace}
\newcommand{\pa}{\ensuremath{{\mathrm{PA}}}\xspace}
\newcommand{\gl}{{\ensuremath{\textup{\textsf{GL}}}}\xspace}
\newcommand{\J}{{\ensuremath{\mathsf{J}}}\xspace}
\newcommand{\ea}{\ensuremath{{\rm{EA}}}\xspace}
\newcommand{\la}{\langle}
\newcommand{\ra}{\rangle}
\newcommand{\NF}{{{\mathbb W}}^\circ}
\def\lb{\left\llbracket}
\def\rb{\right\rrbracket}
\def\fmodels{\xymatrix{
\ar@{|=}[r]^{<\omega}& } }
\def\nmodels{\xymatrix{
\ar@{|=}[r]^{N}& } }
\def\<{\left <}
\def\>{\right >}
\DeclareSymbolFont{AMSb}{U}{msb}{m}{n}
\DeclareMathSymbol{\N}{\mathbin}{AMSb}{"4E}
\DeclareMathSymbol{\Z}{\mathbin}{AMSb}{"5A}
\DeclareMathSymbol{\R}{\mathbin}{AMSb}{"52}
\DeclareMathSymbol{\Q}{\mathbin}{AMSb}{"51}
\DeclareMathSymbol{\I}{\mathbin}{AMSb}{"49}
\DeclareMathSymbol{\C}{\mathbin}{AMSb}{"43}
\begin{document}

\date{\today}
\title{On provability logics with linearly ordered modalities}
\author{Lev D. Beklemishev\footnote{V.A.~Steklov Mathematical Institute, RAS; Moscow M.V. Lomonosov State University; NRU Higher School of Economics; {\tt  bekl@mi.ras.ru }} , David Fern\'{a}ndez-Duque\footnote{Group for Logic, Language and Computation, University of Seville, {\tt dfduque@us.es}}, Joost J. Joosten\footnote{Dept. L\`ogica, Hist\`oria i Filosofia de la Ci\`encia,
Universitat de Barcelona, {\tt jjoosten@ub.edu}}}
\maketitle


\begin{abstract}
We introduce the logics $\gllam$, a generalization of Japaridze's polymodal provability logic $\glp_\omega$ where $\Lambda$ is any linearly ordered set representing a hierarchy of provability operators of increasing strength.

We shall provide a reduction of these logics to
$\glp_{\omega}$ yielding among other things a finitary proof of the normal
form theorem for the variable-free fragment of $\gllam$ and
the decidability of $\gllam$ for recursive orderings
$\Lambda$. Further, we give a restricted
axiomatization of the variable-free fragment of $\gllam$.
\end{abstract}

\section{Introduction}

The provability logic $\glp_\Lambda$ with transfinitely many modalities $\la \ga\ra$, for all ordinals $\ga<\Lambda$,
generalizes the well-known provability logic \glp denoted $\glp_\gw$ in this paper \cite{Japaridze:1986,Boolos:1993:LogicOfProvability}. 
The logic $\glp_\omega$ has been used to carry out a proof-theoretic analysis of Peano Arithmetic and related theories using the approach of provability algebras initiated in \cite{Beklemishev:2004}. A natural next class of theories to analyze with this new approach are predicative theories such as the second order theories of iterated arithmetical comprehension and $\textsf{ATR}_0$. The first necessary step towards analyzing predicative theories with provability algebras was made in \cite{Beklemishev:2005:VeblenInGLP} where logics $\glp_\Lambda$, for an arbitrary ordinal $\Lambda$, were introduced and it was shown that the variable-free fragments of these logics yield a natural ordinal notation system up to the ordinal $\Gamma_0$.

Assuming an ordinal $\Lambda$ to be represented, ordinals of a possibly larger class can be denoted by modal formulas (called \emph{words} or \emph{worms}) of the form
$$\la\ga_1\ra\la\ga_2\ra\dots\la\ga_n\ra\top,$$ where $\ga_i<\Lambda$, identified modulo provable equivalence in $\glp_\Lambda$. The
ordering between two words $A$ and $B$ is naturally defined by
$$A<_0 B\iff \glp_\Lambda\vdash B\to \la 0\ra A.$$ It was shown that
this ordering is a well-ordering, and basic formulas for the
computation of the order types of its initial segments in terms of
Veblen ordinal functions were found in \cite{Beklemishev:2005:VeblenInGLP}.

Since then, the logics $\glp_\Lambda$ and their ordinal notation systems have been studied in much more detail (see
\cite{FernandezJoosten:2012:TuringProgressions,FernandezJoosten:2012:ModelsOfGLP,FernandezJoosten:2012:Hyperations}).
Most importantly, suitable Kripke models for the variable-free fragment of $\glp_\Lambda$ generalizing the so-called Ignatiev model for
$\glp_\gw$ \cite{Ignatiev:1993:StrongProvabilityPredicates} have been
developed. Also, the completeness of $\glp_\Lambda$ w.r.t.\
topological semantics has been proved
\cite{BeklemishevGabelaia:2011:TopologicalCompletenessGLP,Fernandez:2012:TopologicalCompleteness}.
Some of these papers used the normal form results from
\cite{Beklemishev:2005:VeblenInGLP}.

Sections \ref{section:AlternativeAxiomatization} and \ref{section:NormalFormClosedFormulas} of the present paper is in many respects a `recasting' of the part of
\cite{Beklemishev:2005:VeblenInGLP} devoted to the normal forms for
the variable-free fragment of $\glp_\Lambda$ and to its
axiomatizations. The main reason to have such a recasting is that the
exposition in \cite{Beklemishev:2005:VeblenInGLP} was at some places
overly sketchy, to the extent that some parts of the arguments were
only hinted at. The main such omission was the proof of the fact
that the ordering $<_0$ on words was irreflexive, or
equivalently the fact that any individual word was consistent with
$\glp_\Lambda$. Modulo this claim, the rest of the arguments in the
paper were purely syntactical or dealt with ordinal computations. For this consistency result one would naturally use some kind of semantics, which were not available at the time for $\Lambda>\omega$ (but see \cite{FernandezJoosten:2012:ModelsOfGLP}).

Another reason for having a recast of parts of \cite{Beklemishev:2005:VeblenInGLP} is that the authors of \cite{FernandezJoosten:2012:ModelsOfGLP} needed certain results --in particular, Corollaries \ref{theorem:zeroDiamondYieldWorms} and \ref{theorem:CompletenessFollowsFromSoundness} of the current paper -- that follow from the line of reasoning presented in \cite{Beklemishev:2005:VeblenInGLP}. 
However, a proof of these corollaries could not be given without revisiting and sharpening various results from \cite{Beklemishev:2005:VeblenInGLP}.

Moreover, it was remarked in \cite{Beklemishev:2005:VeblenInGLP} that the irreflexivity of $<_0$ follows, for example, from
any arithmetically sound interpretation of $\glp_\Lambda$ w.r.t.\ a sequence of strong provability predicates. Indeed, the
existence of such interpretations was obvious at least for constructive ordinals $\Lambda$. On the other hand, a proof appealing to such an
interpretation is necessarily based on the assumption of soundness of a fairly strong extension of Peano Arithmetic and thus cannot be formalized in Peano Arithmetic itself. For proof-theoretic applications we would like to have an ordering representation whose elementary properties such as  irreflexivity are provable by finitary means (e.g., in Primitive Recursive Arithmetic). Alternative proofs based on the use of Ignatiev-like models or topological models for $\glp_\Lambda$ suffer from the same drawback.

In this paper we remedy this situation and provide a different purely modal finitary proof of irreflexivity based on a reduction of $\glp_\Lambda$ to $\glp_\gw$, for which such a finitary proof is known \cite{BeklemishevJoostenVervoort:2005:FinitaryTreatmentGLP}. We also prove the conservativity of $\glp_\Lambda$ over any of its restrictions to a subset of modalities. This reduction uses the methods of \cite{Beklemishev:2010}.

The exposition of the normal form theorem for variable-free formulas in $\glp_\Lambda$ in this paper is also slightly different from the one in \cite{Beklemishev:2004,Beklemishev:2005:VeblenInGLP}. Namely, the normal forms are defined in a `positive' way, which helps, in particular, to eliminate the assumption of irreflexivity at some places where it is not necessary. Finally, we provide a more restricted axiomatization of the variable-free fragment of $\glp_\Lambda$ than the one in \cite{Beklemishev:2005:VeblenInGLP}.

An additional novelty of this paper is that the results can be stated and proved in a more general context of logics with linearly ordered sets of modalities. Thus, from the outset we introduce and work with a generalization of $\gllam$ to the case when $\Lambda$ is an arbitrary, not necessarily well-founded, linear ordering. So far, proof-theoretic interpretations of such logics have not been investigated; however it seems likely that they can appear, for example, in the study of progressions of theories defined along recursive linear orderings without infinite hyperarithmetical descending sequences (see, e.g., \cite{FefSpe62}).

\section{The logic $\gllam$ and its fragments}

In this section we shall introduce the formal systems that we will
study throughout the paper. Our logics depend on a
parameter, usually denoted $\Lambda$, which is a linear order of the form $\langle|\Lambda|,<\rangle$. They then contain a modality
$[\alpha]$ for each $\alpha\in |\Lambda|$. In analogy to the set-theoretic treatment of ordinals, we will identify $\Lambda$ with an upper bound for its elements and often write $\alpha < \Lambda$ instead of $\alpha \in |\Lambda|$; elements of $|\Lambda|$ will sometimes be called {\em modals}. Note, however, that unlike previous studies of $\gllam$, we allow for $\Lambda$ to be an arbitrary linear order.

We will also introduce some important fragments of $\gllam$. These fragments are easier to work with from a technical point of view,
yet they already contain much of the crucial information about the
full logic, as we shall see.

\subsection{The logics $\gllam$}

The full language ${\cL}_\Lambda$ is built from propositional
variables in a countably infinite set $\mathbb P$ and the constant
$\top$ together with the Boolean connectives $\neg,\wedge$ and a
unary modal operator $[\alpha]$ for each $\alpha\in\Lambda$. As is
customary, other Boolean operators may be defined in the standard
way and we write $\langle\alpha\rangle$ as a shorthand for
$\neg[\alpha]\neg$.

We will use $\mods \phi$ to denote the set of elements of $|\Lambda|$
appearing in $\phi$ and $\max\phi$ to be the
maximum of these modals. We also use $l(\phi)$ to denote the {\em
length} of $\phi$, defined in a standard way, and $w(\phi)$ to be
its {\em width,} that is, the number of modals appearing in
$\phi$.

\begin{definition}[$\gllam$]
Given a linear order $\Lambda=\langle|\Lambda|,<\rangle$, $\gllam$ is the logic over ${\cL}_\Lambda$ given by the
following rules and axioms:

\begin{itemize}
\item All substitution instances of propositional tautologies,

\item For all $\alpha,\beta\in|\Lambda|$ and formulas
$\chi,\psi\in{\cL}_\Lambda$,
\[
\begin{array}{lll}
$(\romannumeral1)$ & [\alpha] (\chi \to \psi) \to ([\alpha] \chi \to  [\alpha]\psi) & \\
$(\romannumeral2)$ & {}[ \alpha ] ([\alpha] \chi \to \chi) \to [\alpha] \chi &\\
$(\romannumeral3)$ & {}[\alpha] \chi \to [\beta] [\alpha] \chi & \mbox{for  $\alpha \leq \beta$}\\
$(\romannumeral4)$ & \langle \alpha \rangle \chi \to [\beta] \langle \alpha \rangle \chi & \mbox{for  $\alpha < \beta$,}\\
$(\romannumeral5)$ & {}[\alpha] \chi \to [\beta] \chi &\mbox{for
$\alpha \leq \beta$}.
\end{array}
\]
\item Modus Ponens and the necessitation rule $\displaystyle\frac{\chi}{[\alpha] \chi}$ for each modality $\alpha \in |\Lambda|$.
\end{itemize}
\end{definition}
This definition contains certain redundancies: Axiom
$(\romannumeral3)$ is clearly derivable in presence of the others,
and necessitation for $0$ would suffice given Axiom
$(\romannumeral5)$. However, it will be convenient to state these
principles separately.


\subsection{Kripke semantics}

Kripke models give us a transparent and convenient interpretation
for many modal logics. A {\em Kripke frame} is a structure
$\mathfrak F=\<W,\<R_\lambda\>_{\lambda<\Lambda}\>$, where $W$ is a
set and $\<R_\lambda\>_{\lambda<\Lambda}$ a family of binary
relations on $W$. A {\em valuation} on $\mathfrak F$ is a function
$\lb\cdot\rb:{\cL}_\Lambda\to \mathcal P(W)$ such that
\[
\begin{array}{lcllcl}
\lb\bot\rb&=&\varnothing\ &\qquad
\lb\neg\phi\rb&=&W\setminus\lb\phi\rb\\
\lb\phi\wedge\psi\rb&=&{}\lb\phi\rb\cap\lb\psi\rb\ &\qquad
\lb\<\lambda\>\phi\rb&=&R^{-1}_\lambda\lb\phi\rb.
\end{array}
\]

A {\em Kripke model} is a Kripke frame equipped with a valuation
$\lb\cdot\rb$. Note that propositional variables may be assigned
arbitrary subsets of $W$. Often we will write $\<\mathfrak
F,\lb \cdot \rb\>,x\Vdash\psi$ instead of $x\in\lb \psi\rb$ or even just $x\Vdash\psi$ if the context allows us to. As usual, $\phi$ is
{\em satisfied} on $\<\mathfrak
F,\lb \cdot \rb\>$ if $\lb\phi\rb\not=\varnothing$,
and {\em valid} on $\<\mathfrak
F,\lb \cdot \rb\>$ if $\lb\phi\rb=W$. The latter case shall be denoted by $\<\mathfrak F,\lb \cdot \rb\> \models \psi$.

We shall also use the notion of frame validity in that $\mathfrak
F , x \models \psi$ denotes that $\<\mathfrak
F,\lb \cdot \rb\>,x\Vdash\psi$ for any valuation $\lb \cdot \rb$. Likewise, $\mathfrak
F  \models \psi$ denotes that $\mathfrak
F , x \models \psi$ for all $x$ in $\mathfrak
F$.

$\gllam$ has no non-trivial Kripke models, but its \emph{variable-free} or {\em closed
fragment} (defined below) does
\cite{Ignatiev:1993:StrongProvabilityPredicates}. We will use a
sublogic $\J_\Lambda$ of $\gllam$ that is sound and complete
w.r.t.\ a suitable class of finite frames called \emph{J-frames}.
The logics $\J_\Lambda$ can be obtained from the given axiomatization of $\gllam$
by replacing the monotonicity axiom schema (v) by the following
schema (derivable in $\gllam$):
$$[\alpha] \phi \to [\ga] [\gb] \phi, \text{for  $\alpha \leq
\beta$}.$$ This system has been introduced in
\cite{Beklemishev:2010} just for the language $\cL_\gw$. Although it
is easy to see that the Kripke model completeness theorem for $\J_\gw$
proved in \cite{Beklemishev:2010} holds more generally, we will
actually use it only for the logic $\J_\gw$.

A Kripke frame is called a \emph{$J_\Lambda$-frame} if, for all
$\gb<\ga<\Lambda$,
\begin{itemize}
\item $R_\ga$ is a conversely well-founded, transitive ordering relation on
$W$;
\item $\forall x,y\:( x R_\ga y \Rightarrow \forall z\:(xR_\gb z \Leftrightarrow
yR_\gb z))$;
\item $\forall x,y\:( xR_\ga y \ \mathop{\&}\  yR_\gb z \Rightarrow xR_\ga
z)$.
\end{itemize}
A $\J_\Lambda$-frame is called \emph{finite} if so is the set of its nodes $W$. A \emph{$\J_\Lambda$-model} is a Kripke model based on a $\J_\Lambda$-frame.

The following is proved in \cite{Beklemishev:2010} for $\Lambda=\gw$, but holds more generally with the same proof.
\begin{proposition}\
\begin{enumerate}
\item If $\J_\Lambda\vdash\phi$ then $\phi$ is valid in all $J_\Lambda$-models;
\item
If $\J_\Lambda\nvdash\phi$ then $\phi$ is not valid in some finite $J_\Lambda$-model.
\end{enumerate}
\end{proposition}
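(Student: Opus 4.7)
The plan is to prove both statements by adapting the argument for $\J_\omega$ from \cite{Beklemishev:2010}; the crucial observation enabling the adaptation is that any single formula $\phi$ mentions only finitely many modals, so the whole argument takes place inside a finite sub-order of $\Lambda$ and the specific order type of $\Lambda$ plays no role beyond linearity.

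For part (1), I would proceed by a routine induction on derivations. The base cases reduce to checking that each axiom schema is valid on every $\J_\Lambda$-frame $\mathfrak F$. Axioms (i) and (ii) use, respectively, normality and the conjunction of converse well-foundedness and transitivity of $R_\alpha$ (the standard semantic verification of L\"ob's axiom). Axioms (iii) and (iv) follow from the first J-frame interaction condition, which forces $R_\alpha$- and $R_\beta$-successors to agree on $R_\gamma$-successors for $\gamma$ strictly smaller than the larger of $\alpha, \beta$. The replacement schema $[\alpha]\phi\to[\alpha][\beta]\phi$ follows from the composition condition $x R_\alpha y \wedge y R_\beta z \Rightarrow x R_\alpha z$. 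Modus ponens and necessitation preserve validity by the usual argument.

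For part (2), given $\J_\Lambda\nvdash\phi$, I would build a finite countermodel via a filtration / finite canonical model construction. Let $\Sigma$ be a suitable finite subformula-closed set containing $\phi$; let $M$ be the finite set of modals occurring in $\Sigma$. Let $W$ be the collection of maximal $\J_\Lambda$-consistent subsets of $\Sigma$. For each $\alpha \in M$ one defines $R_\alpha$ so that $w R_\alpha v$ iff (i) every $[\alpha]\psi \in w$ places $\psi$ and $[\alpha]\psi$ in $v$, (ii) every $\langle\alpha\rangle\psi\in v$ is in $w$, and (iii) some designated $[\alpha]\psi \in v \setminus w$ witnesses a strict decrease of rank. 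The last clause, the standard Segerberg trick exploiting L\"ob's axiom, ensures converse well-foundedness on the finite set $W$; for modals $\alpha\notin M$ one can either set $R_\alpha$ to be the appropriate relation inherited from the nearest smaller element of $M$ or leave it empty without affecting truth in $\Sigma$. The truth lemma then follows by induction on $\psi\in\Sigma$, with the modal case using maximal consistency and the definition of $R_\alpha$.

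The main obstacle is verifying, for the constructed relations, that the two interaction conditions of the J-frame definition hold simultaneously with the truth lemma; this is precisely where axioms (iii), (iv) and the replacement for (v) are used in the canonical model. Once the truth lemma is established on the resulting finite J-frame, any $w \in W$ containing $\neg \phi$ gives the required finite countermodel. Since this entire argument depends on $\Lambda$ only through the finite suborder $M$ with its induced linear order, it goes through verbatim for arbitrary linearly ordered $\Lambda$, which justifies the claim that the $\Lambda=\omega$ proof of \cite{Beklemishev:2010} lifts to the general case.
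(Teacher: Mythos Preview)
Your approach coincides with the paper's: the proposition is not proved in the paper but is attributed to \cite{Beklemishev:2010} with the remark that the $\omega$-argument goes through unchanged for arbitrary linear $\Lambda$, which is precisely your observation that any given formula involves only a finite suborder.

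The soundness outline is fine. In the completeness sketch the overall shape is right (finite filtration over a subformula-closed set, a strict-increase witness for converse well-foundedness, truth lemma), but your definition of $R_\alpha$ mentions only the modality $\alpha$ itself. That suffices for unimodal $\mathsf{GL}$ but not for $\J$: with clauses (i)--(iii) alone one cannot verify the second J-frame condition (that $R_\alpha$-related worlds share their $R_\gamma$-successors for every $\gamma<\alpha$), and indeed you flag this as ``the main obstacle'' without actually resolving it. What is missing is a further clause in the definition of $w R_\alpha v$ requiring $w$ and $v$ to agree on every formula in $\Sigma$ whose outermost modality is some $\gamma<\alpha$; axioms (iii) and (iv) are exactly what allow the existence lemma to survive this stronger constraint, and the J-frame interaction conditions then become immediate from the definition rather than an obstacle to be overcome afterward. (Your clause (ii), incidentally, is redundant: it follows from clause (i) once $\Sigma$ is closed under single negations.)
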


\subsection{Fragments of $\gllam$}\label{section:ClosedFragments}

There are two particular families of sublogics of $\gllam$ which we
will focus on later. The first is the fragment without variables,
which as we shall see is already quite expressive:

\begin{definition}[Closed fragment]
We denote by $\cL^0_\Lambda$ the sublanguage of $\cL_\Lambda$ whose
formulas do not contain propositional variables (only $\top$).

$\gllam^0$ denotes the intersection of $\gllam$ with
$\cL^0_\Lambda$.
\end{definition}

That is, $\glp^0_\Lambda$ is the set of provable formulas of
$\gllam$ that do not contain any propositional variables. It
is clear that any closed formula $\psi$ provable in
$\gllam^0$ can also be proved using proofs and axioms
without variables. For, given a proof $\pi$ of $\psi$, we can
substitute $\top$ (or $\bot$) for the propositional variables that
occur in $\pi$. After substitution we still have a proof of $\psi$.

The second fragment is the restriction to a subset of all
modals, which is especially useful when this subset is finite.

\begin{definition}
For any subset $S\subseteq|\Lambda|$, let $\cL_S$ denote the language
with the set of modalities $\{[\xi]:\xi\in S\}$, and let $\glp_S$ be
the logic\footnote{In principle we should include $\Lambda$ as a second parameter since these logics depend on the specific ordering, but we will let this be given by context.} given by the restriction of the axioms and rules of
$\gllam$ to $\cL_S$.
\end{definition}

As we shall see in Section \ref{section:ReductionForFullGLP}, any
provable formula of $\cL_S$ is also provable within $\glp_S$.
However, this is not as immediate as in the case of the closed
fragment.


\section{Reduction of $\gllam$ to its finite fragments}
\label{section:ReductionForFullGLP}

Here we show that $\gllam$ is conservative over any of its
fragments obtained by restricting the language to a subset of its
modalities.

Clearly, if $S$ is a set of ordinals, $\glp_S$ is only notationally
different from $\glp_\gb$ where $\gb$ is the order type of $S$. More
precisely, let $\xi_\ga$ be the $\ga$-th element of $S$ and let
$\xi(\phi)$ denote the result of replacing in a formula $\phi$ (in
the language $\cL_\gb$) each modality $[\ga]$ by $[\xi_\ga]$.
Similarly, let $\xi^{-1}(\psi)$ denote the inverse operation. Then
the following lemma is obvious.

\bl \label{isom}\
\begin{enumr}
\item $\glp_\gb\vdash\phi$ iff $\glp_S\vdash\xi(\phi)$;
\item
$\glp_S\vdash \psi$ iff $\glp_\gb\vdash\xi^{-1}(\psi)$.
\end{enumr}
\el

The conservation result is now stated as follows.

\bt \label{conserv} Given a linear order $\Lambda$, $S\subseteq|\Lambda|$ and a formula $\phi$ in $\cL_S$,
$\gllam\vdash\phi$ iff $\glp_S\vdash\phi$. \et

\begin{proof}
A proof will proceed in two steps. First, we prove the
conservativity of $\glp_\gw$ over any of its finite fragments.
Secondly, we will use a purely syntactic argument to lift this
result to arbitrary fragments of $\gllam$.

We are going to use the following standard reduction of $\glp_\gw$
to $\J_\gw$ (see \cite{Beklemishev:2010}). Let $\phi$ be a
$\glp_\gw$-formula, and let $\{[m_i]\phi_i:i<s\}$ be all the boxed
subformulas of $\phi$ with $m_i \leq m_j$ whenever $i<j$. Denote:
\[
M^+(\phi):=M(\phi)\land \bigwedge_{i\leq m_s} [i]M(\phi),
\]
where
\[
M(\phi):=\bigwedge_{i< s} \bigwedge_{m_i<j\leq m_s} ([m_i]\phi_i\to [j]\phi_i).
\]

The following result is proved in \cite{Beklemishev:2010} using
Kripke model techniques. Alternative proofs (using the topological
and the arithmetical semantics, respectively) can be found in
\cite{Beklemishev:2011:SimplifiedArithmeticalCompleteness,BeklemishevGabelaia:2011:TopologicalCompletenessGLP}.
The proof in \cite{Beklemishev:2010} has the advantage of being
formalizable in Elementary Arithmetic.\footnote{The formula
$M(\phi)$ is misspelled in \cite{Beklemishev:2010}.}

\bl \label{one} $\glp_\gw\vdash \phi\iff \J_\gw\vdash
M^+(\phi)\to\phi.$ \el

We are going to show here that the formula $M^+$ can be replaced by
a formally weaker one: $N^+(\phi):=N(\phi)\land \bigwedge_{i<s}
[m_i]N(\phi)$ and
$$N(\phi):=\bigwedge_{i<s} \bigwedge_{i<j<s} ([m_i]\phi_i\to [m_j]\phi_i).$$
Notice that $N^+(\phi)$ is in the language of $\phi$.

\bl \label{two} $\glp_\gw\vdash \phi\iff \J_\gw\vdash
N^+(\phi)\to\phi.$ \el

\begin{proof} Suppose $\J_\gw\nvdash N^+(\phi)\to\phi$. Then there is a
finite $\J_\gw$-model $\cW$ with a node $r$ such that $\cW,r\Vdash
N^+(\phi)$ and $\cW,r\nVdash\phi$. Replace each relation $R_k$ in
$\cW$ by $\emptyset$, for all $k\notin S:=\{m_0,\dots,m_{s-1}\}$.
The result is still a $\J_\gw$-model (denoted $\cW'$), and the
forcing of formulas in the language of $\phi$ is everywhere the
same.


Finally, we observe that $M^+(\phi)$ is true at $r$. It is
sufficient to show that each implication $[m_i]\phi_i\to [j]\phi_i$,
for $m_i<j\leq m_s$, holds at each point $x\in \cW'$ reachable from
$r$. We observe that such an $x$ is either $r$ itself or is
reachable by one of the relations $R_{m_i}$, for $i<s$. Since
$r\Vdash N(\phi)\land [m_i]N(\phi)$ we have $x\Vdash N(\phi)$.
Hence, if $j\in S$ we have $x\Vdash [m_i]\phi_i\to [j]\phi_i$ as
required. However, if $j\notin S$ the relation $R_j$ is empty, and
thus $x\Vdash [j]\phi_i$ trivially. Thus, Lemma \ref{two} follows
from Lemma \ref{one}. \end{proof}

For any $S\subseteq \gw$ let $\J_S$ denote the restriction of the
logic $\J_\gw$ to the language $\cL_S$.

\bl  \label{three} For any formula $\phi$ in $\cL_S$,
$\J_\gw\vdash\phi$ iff $\J_S\vdash\phi$. \el

\begin{proof}
Only the (only if) part needs to be proved. Assume
$\J_S\nvdash\phi$. Consider any $\J_\gw$-model $\cW$ in the
restricted language $\cL_S$ such that $\cW\not \models\phi$. For each
$i\notin S$, define a new relation $R_i$ on $\cW$ by letting
$R_i=\emptyset$. The expanded model $\cW'$ is a model of $\J_\gw$
and $\cW'\not \models \phi$. Hence, $\J_\gw\nvdash\phi$.
\end{proof}

From Lemmas \ref{two} and \ref{three} we obtain the conservativity
of $\glp_\gw$ over its fragments.

\begin{corollary} \label{cons-omega}
Let $S\subseteq \gw$ and $\phi$ be a formula in $\cL_S$. Then
$\glp_\gw\vdash\phi$ iff $\glp_S\vdash\phi$.
\end{corollary}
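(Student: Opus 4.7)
The plan is to chain Lemmas \ref{two} and \ref{three}, after checking that the auxiliary formula $N^+(\phi)$ is not only in the restricted language $\cL_S$ but already provable in $\glp_S$ whenever $\phi\in\cL_S$. The direction $\glp_S\vdash\phi\Rightarrow\glp_\gw\vdash\phi$ is immediate, since every axiom and rule of $\glp_S$ is also one of $\glp_\gw$; all the content lies in the converse.

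Suppose then that $\glp_\gw\vdash\phi$ with $\phi\in\cL_S$. By Lemma \ref{two}, $\J_\gw\vdash N^+(\phi)\to\phi$. Every modality occurring in $N^+(\phi)$ is the outermost modality $m_i$ of some boxed subformula $[m_i]\phi_i$ of $\phi$, and all such modalities lie in $S$; hence $N^+(\phi)\to\phi\in\cL_S$. Lemma \ref{three} therefore yields $\J_S\vdash N^+(\phi)\to\phi$. Since the monotonicity schema used to axiomatise $\J_\Lambda$ is a theorem of $\gllam$ (derivable from axioms (iii) and (v)), we have $\J_S\subseteq\glp_S$, and in particular $\glp_S\vdash N^+(\phi)\to\phi$.

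It remains to establish $\glp_S\vdash N^+(\phi)$, after which a single application of modus ponens will finish the proof. But each conjunct of $N(\phi)$ has the form $[m_i]\phi_i\to[m_j]\phi_i$ with $i<j$, hence $m_i\le m_j$, and with $m_i,m_j\in S$; that is exactly an instance of axiom (v) of $\glp_S$. Thus $\glp_S\vdash N(\phi)$, and applying necessitation gives $\glp_S\vdash[m_i]N(\phi)$ for each $i<s$. Conjoining these yields $\glp_S\vdash N^+(\phi)$, and modus ponens delivers $\glp_S\vdash\phi$, as required. No genuine obstacle arises here; the only point requiring care is the simultaneous bookkeeping that $N^+(\phi)$ sits in $\cL_S$ (so that Lemma \ref{three} applies) while also being $\glp_S$-provable (so that it can be discharged from the implication supplied by Lemmas \ref{two} and \ref{three}).
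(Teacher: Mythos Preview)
Your argument is correct and is exactly the intended unpacking of the paper's terse ``from Lemmas \ref{two} and \ref{three}'': pass from $\glp_\gw\vdash\phi$ to $\J_\gw\vdash N^+(\phi)\to\phi$, then to $\J_S\vdash N^+(\phi)\to\phi$ since $N^+(\phi)\in\cL_S$, and finally discharge $N^+(\phi)$ inside $\glp_S$ using axiom (v) and necessitation. The paper leaves this last step implicit (it is just the easy direction of the Lemma~\ref{two} equivalence, relativised to $S$), but your explicit verification that $\J_S\subseteq\glp_S$ and $\glp_S\vdash N^+(\phi)$ is precisely what is needed.
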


Now we turn to the general case and prove Theorem \ref{conserv}.
Assume $\phi$ is in $\cL_S$ and $\gllam\vdash\phi$. Let
$R\subseteq|\Lambda|$ be the set of all modals occurring in the
given derivation of $\phi$. The same derivation shows that
$\glp_R\vdash\phi$. Since $R$ is finite, we can assume it is
enumerated by some function $\xi:\{0,\dots,n-1\}\to R$. Let
$\psi:=\xi^{-1}(\phi)$. By Lemma \ref{isom} we obtain $\glp_n\vdash
\psi$ and hence $\glp_\gw\vdash \psi$.

Let $F$ be the set of modals occurring in $\phi$. Obviously,
$F\subseteq R$ and $G:=\xi^{-1}(F)\subseteq \gw$. Therefore, by
Corollary \ref{cons-omega} $\glp_{G}\vdash\psi$. It follows that
$\glp_{\xi G}\vdash \xi(\psi)$, that is, $\glp_F\vdash \phi$. Since
$F\subseteq S$ we conclude that $\glp_S\vdash \phi$, as required.
This completes the proof of Theorem \ref{conserv}.
\end{proof}

For any formula $\phi$ let $\hat{\phi}$ denote $\xi^{-1}(\phi)$,
where $\xi:\{0,\dots,n-1\}\to F$ enumerates the set $F$ of all
modals occurring in $\phi$. Applying Theorem \ref{conserv} to $F$
we obtain the following corollary.

\begin{corollary} For any $\phi$, $\gllam\vdash\phi$ iff
$\glp_n\vdash\hat{\phi}$ iff $\glp_\omega\vdash\hat{\phi}$. \end{corollary}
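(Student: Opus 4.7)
The plan is to obtain the corollary as a direct chain of the three main results already established in this section: Theorem \ref{conserv}, Lemma \ref{isom}, and Corollary \ref{cons-omega}. There is essentially no new content to prove; the task is to line up the hypotheses correctly and, in particular, to make sure that the enumeration $\xi$ used to define $\hat{\phi}$ is order-preserving, so that Lemma \ref{isom} applies.

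First I would let $F$ denote the (finite) set of modals occurring in $\phi$, and apply Theorem \ref{conserv} with $S := F$, which is legitimate since $\phi\in\cL_F$. This yields
\[
\gllam\vdash\phi \iff \glp_F\vdash\phi.
\]
Next, enumerate $F$ in increasing order as $\xi:\{0,\ldots,n-1\}\to F$, where $n=|F|$. Then $\xi$ is an order isomorphism between the finite ordinal $n$ and $F$, so Lemma \ref{isom} applies with $\gb=n$ and $S=F$ and delivers
\[
\glp_F\vdash\phi \iff \glp_n\vdash \xi^{-1}(\phi) \ =\ \hat{\phi}.
\]
This accounts for the first equivalence of the corollary.

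For the second equivalence, I would observe that $\hat{\phi}$ is a formula of $\cL_{\{0,\ldots,n-1\}}$, and that $\{0,\ldots,n-1\}\subseteq\gw$. Hence Corollary \ref{cons-omega} gives
\[
\glp_\gw\vdash\hat{\phi} \iff \glp_{\{0,\ldots,n-1\}}\vdash\hat{\phi},
\]
and the logic $\glp_{\{0,\ldots,n-1\}}$ is literally the same as $\glp_n$ (they have identical axioms and the same language). Combining this with the previous equivalence yields the desired three-way equivalence.

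The only place where any care is needed is the choice of $\xi$: Lemma \ref{isom} is stated for the canonical increasing enumeration of $S$, so one should not just take an arbitrary bijection between $\{0,\ldots,n-1\}$ and $F$. Once that is fixed, every step is a direct invocation of a prior result, and there is no real obstacle.
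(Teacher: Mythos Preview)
Your proposal is correct and follows essentially the same route as the paper: apply Theorem \ref{conserv} with $S=F$ to get $\gllam\vdash\phi\iff\glp_F\vdash\phi$, then Lemma \ref{isom} to translate to $\glp_n\vdash\hat{\phi}$. You are in fact more explicit than the paper, which leaves the equivalence with $\glp_\omega$ unstated; your appeal to Corollary \ref{cons-omega} (together with the identification of $\glp_{\{0,\dots,n-1\}}$ with $\glp_n$) is exactly the right way to fill that in.
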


\begin{proof} By Theorem \ref{conserv}, $\gllam\vdash\phi$ iff
$\glp_F\vdash\phi$, whereas by Lemma \ref{isom} the latter is
equivalent to $\glp_n\vdash\hat{\phi}$.
\end{proof}

By this corollary, the logic $\gllam$ inherits many nice
properties proved for $\glp_\gw$. Let us state a few explicitly. Below, the corollaries follow directly from their counterparts as proven for $\glp_\gw$ \cite{Ignatiev:1993:StrongProvabilityPredicates,Beklemishev:2010:OnCraigInterpolation,Shamkanov:2011:InterpolationProperties}.

\begin{corollary}
$\gllam$ is a decidable logic, provided $\Lambda$ has a recursive presentation.
\end{corollary}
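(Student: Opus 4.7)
The plan is to reduce the decision problem for $\gllam$ to that of $\glp_\omega$ by means of the previous corollary, which states that $\gllam\vdash\phi$ iff $\glp_\omega\vdash\hat{\phi}$, where $\hat{\phi}=\xi^{-1}(\phi)$ and $\xi:\{0,\dots,n-1\}\to F$ enumerates the finite set $F$ of modals occurring in $\phi$. Since $\glp_\omega$ is known to be decidable (this is the classical result of Japaridze, see \cite{Ignatiev:1993:StrongProvabilityPredicates}), it suffices to show that the translation $\phi\mapsto\hat{\phi}$ is effectively computable from $\phi$ under the hypothesis that $\Lambda$ has a recursive presentation.

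First I would make precise what is meant by a ``recursive presentation'' of $\Lambda$: the underlying set $|\Lambda|$ is a recursive (or recursively enumerable) set of natural number codes, and the ordering relation $<$ on $|\Lambda|$ is decidable. Then, given any formula $\phi\in\cL_\Lambda$ (presented by its syntactic code, with modalities labeled by elements of $|\Lambda|$), one can effectively read off the finite set $F=\mods\phi$ of modals occurring in $\phi$, since $F$ is just the set of labels appearing in $\phi$'s syntax tree.

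Next, using the decidability of the order $<$ on $\Lambda$, one can sort $F$ in increasing order to obtain an effective enumeration $\xi:\{0,\dots,n-1\}\to F$ where $n=|F|$. Applying the inverse substitution $\xi^{-1}$ to $\phi$ — that is, replacing each occurrence of a modality $[\xi_i]$ by $[i]$ — is a trivial syntactic operation and produces the formula $\hat{\phi}\in\cL_n\subseteq\cL_\omega$. Hence $\hat\phi$ is effectively computable from $\phi$.

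Finally, having computed $\hat\phi$, we invoke the decision algorithm for $\glp_\omega$ to determine whether $\glp_\omega\vdash\hat\phi$, and by the previous corollary this answer coincides with whether $\gllam\vdash\phi$. There is no real obstacle in this argument; the only substantive ingredient beyond the preceding reduction is the recursiveness of the ordering on $\Lambda$, which is exactly what the hypothesis provides, and the established decidability of $\glp_\omega$.
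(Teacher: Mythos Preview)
Your proposal is correct and follows exactly the approach intended by the paper: reduce to $\glp_\omega$ via the preceding corollary (computing $\hat\phi$ effectively using the recursive order on $\Lambda$) and then invoke the known decidability of $\glp_\omega$. The paper in fact gives no separate proof of this corollary, merely noting that it ``follows directly'' from the reduction and the corresponding result for $\glp_\omega$; you have simply spelled out the details the paper leaves implicit.
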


\begin{corollary}
$\gllam$ enjoys Craig interpolation: If $\psi(\vec p, \vec q)$ and $\phi (\vec q, \vec r)$ are $\cL_{\Lambda}$-formulas with all variables among the distinct variables $\vec p,\vec q, \vec r$ with
$\gllam \vdash \psi(\vec p, \vec q) \to \phi (\vec q, \vec r)$, then there is some formula $\theta( \vec {q} )$ whose variables are all among $\vec q$ such that
\[
\gllam \vdash \Big( \psi(\vec p, \vec q) \to \theta( \vec {q} )\Big) \ \ \wedge \ \ \Big( \theta( \vec {q} ) \to \phi (\vec q, \vec r) \Big).
\]
\end{corollary}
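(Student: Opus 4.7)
The plan is to reduce Craig interpolation for $\gllam$ to Craig interpolation for $\glp_\omega$, which is the result of Shamkanov cited just before the corollary. The reduction mechanism is the preceding corollary: for any formula $\chi$, $\gllam\vdash\chi$ iff $\glp_\omega\vdash\hat\chi$, where $\hat\chi$ is obtained by enumerating the finite set of modals occurring in $\chi$ by a bijection $\xi:\{0,\dots,n-1\}\to F$ and setting $\hat\chi=\xi^{-1}(\chi)$.

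Concretely, assume $\gllam\vdash\psi(\vec p,\vec q)\to\phi(\vec q,\vec r)$. Let $F\subseteq|\Lambda|$ be the finite set of modals occurring in $\psi\to\phi$, and let $\xi:\{0,\dots,n-1\}\to F$ enumerate $F$ in increasing order. Setting $\psi':=\xi^{-1}(\psi)$ and $\phi':=\xi^{-1}(\phi)$, we have $\xi^{-1}(\psi\to\phi)=\psi'\to\phi'$, so the corollary gives $\glp_\omega\vdash\psi'(\vec p,\vec q)\to\phi'(\vec q,\vec r)$. Applying Craig interpolation for $\glp_\omega$, we obtain a formula $\theta'(\vec q)\in\cL_\omega$, with propositional variables among $\vec q$, such that
\[
\glp_\omega\vdash \psi'(\vec p,\vec q)\to\theta'(\vec q) \quad\text{and}\quad \glp_\omega\vdash \theta'(\vec q)\to\phi'(\vec q,\vec r).
\]

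Now define $\theta(\vec q):=\xi(\theta'(\vec q))$. Since every modal appearing in $\theta'$ must appear in both $\psi'$ and $\phi'$ (in particular, it is in the range of $\xi^{-1}$), all modals in $\theta'$ lie in $\{0,\dots,n-1\}$, so the substitution $\xi$ is defined on $\theta'$ and produces a formula in $\cL_\Lambda$ whose modals lie in $F\subseteq|\Lambda|$. Then $\xi(\psi'\to\theta')=\psi\to\theta$ and $\xi(\theta'\to\phi')=\theta\to\phi$, and the modals occurring in these implications are contained in $F$. Applying the corollary in the other direction (via Lemma \ref{isom} and Theorem \ref{conserv}), we conclude $\gllam\vdash\psi\to\theta$ and $\gllam\vdash\theta\to\phi$, as required.

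The argument is essentially a bookkeeping exercise once the reduction corollary is in hand; the only point deserving care is the observation that the $\glp_\omega$-interpolant $\theta'$ uses only modalities common to $\psi'$ and $\phi'$ (hence in the range of $\xi^{-1}$), so that the back-translation via $\xi$ is well defined and preserves the two implications. The substantive content of the proof is of course the Craig interpolation theorem for $\glp_\omega$ itself, which we invoke as a black box.
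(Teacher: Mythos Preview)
Your argument is correct and is precisely the spelled-out version of what the paper leaves implicit (the paper merely asserts that the corollary follows from its $\glp_\omega$ counterpart via the reduction). The one subtlety you rightly flag---that the $\glp_\omega$-interpolant $\theta'$ must use only modalities common to $\psi'$ and $\phi'$, so that $\xi$ is defined on it---requires the full language form of Craig interpolation for $\glp_\omega$, interpolating on modalities as well as on propositional variables; this is indeed what the cited references establish, even though the corollary as phrased here only mentions variables explicitly.
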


\begin{corollary}
$\gllam$ has unique fixpoints: Let $\psi(\vec p, q)$ be a formula of $\cL_\Lambda$ where $q$ only occurs under the scope of a modality.  Then, there exists some $\phi (\vec p)$ such that $\psi(\vec p, q/ \phi (\vec p))$ is $\gllam$-provably equivalent to $\phi (\vec p)$. Moreover, this is provable within $\gllam$ itself:
\[
\gllam \vdash \boxdot (q \leftrightarrow \phi (\vec p)) \ \leftrightarrow \ \boxdot (q \leftrightarrow \psi(\vec p, q)).
\]
\end{corollary}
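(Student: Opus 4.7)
The plan is to derive this fixpoint theorem by pulling back the known Sambin--de Jongh fixpoint theorem for $\glp_\omega$ (as in \cite{Shamkanov:2011:InterpolationProperties}) through the translation $\hat{\cdot}$, following exactly the pattern already used above to transfer decidability and Craig interpolation from $\glp_\omega$ to $\gllam$. So the argument consists of three routine ingredients: verifying that the syntactic hypothesis on $\psi$ survives the translation, borrowing a fixpoint in $\cL_\omega$, and translating it back.

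Concretely, I would begin by letting $F\subseteq|\Lambda|$ denote the (finite) set of modals occurring in $\psi(\vec p, q)$, equipped with its increasing enumeration $\xi:\{0,\dots,n-1\}\to F$, and setting $\hat\psi:=\xi^{-1}(\psi)\in\cL_n$. Since $\xi^{-1}$ is a bijective renaming of modalities that commutes with Boolean connectives and sends $[\alpha]$ to $[\xi^{-1}(\alpha)]$, the hypothesis that $q$ appears only under the scope of a modality passes unchanged from $\psi$ to $\hat\psi$. The fixpoint theorem for $\glp_\omega$ then supplies a formula $\phi'(\vec p)\in\cL_n$, taken in the language of $\hat\psi$, such that
\[
\glp_\omega \vdash \boxdot\bigl(q\leftrightarrow \phi'(\vec p)\bigr)\leftrightarrow \boxdot\bigl(q\leftrightarrow \hat\psi(\vec p, q)\bigr),
\]
where $\boxdot\chi:=\chi\wedge[0]\chi$.

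Now define $\phi(\vec p):=\xi(\phi')\in\cL_F\subseteq\cL_\Lambda$. Applying $\xi$ to the displayed equivalence and invoking Lemma \ref{isom} yields
\[
\glp_F \vdash \boxdot^\star\bigl(q\leftrightarrow \phi(\vec p)\bigr)\leftrightarrow \boxdot^\star\bigl(q\leftrightarrow \psi(\vec p, q)\bigr),
\]
where $\boxdot^\star\chi:=\chi\wedge[\min F]\chi$. By axiom (v), $\boxdot^\star\chi$ is provably equivalent in $\gllam$ to $\chi\wedge\bigwedge_{\alpha\in F}[\alpha]\chi$, which recovers the intended reading of $\boxdot$ for the formulas at hand. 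Since $\glp_F\subseteq\gllam$ (or by a direct appeal to Theorem \ref{conserv}), the equivalence lifts to $\gllam$, which is the conclusion of the corollary.

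The one step that is not purely mechanical is the parenthetical claim that the fixpoint theorem for $\glp_\omega$ can be arranged to produce $\phi'$ in the language of $\hat\psi$; this is the standard output of the Sambin-style inductive construction on the depth of $q$-occurrences and should be verified from the cited source before invoking $\xi$. A safe fallback, should one wish to be agnostic about the specific proof, is to accept $\phi'$ in an arbitrary finite subset of $\cL_\omega$ and then apply Theorem \ref{conserv} to shrink the set of modals appearing in $\phi'$ to $\{0,\dots,n-1\}$ before pulling back.
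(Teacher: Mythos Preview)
Your approach is exactly the one the paper intends: the paper offers no proof beyond the blanket remark that ``the corollaries follow directly from their counterparts as proven for $\glp_\omega$,'' and you have correctly spelled out the transfer via $\xi$ and Lemma~\ref{isom}, including the observation that the guardedness of $q$ survives the relabeling and that $\boxdot$ should be read relative to $\min F$ for the formulas at hand.

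One small correction: your proposed fallback does not work as stated. Theorem~\ref{conserv} is a conservativity result about \emph{derivability}, not a tool for replacing a formula by an equivalent one in a smaller signature; if $\phi'$ happened to use modalities outside $\{0,\dots,n-1\}$, you could not ``shrink'' them away without invoking something like interpolation. Fortunately the primary route is solid, since the Sambin--de Jongh construction is syntactic and produces $\phi'$ in the language of $\hat\psi$, so the fallback is never needed.
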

The standard variations of this theorem like unique solutions to simultaneous fixpoints equations also carry directly through to $\gllam$.

\begin{corollary}
$\gllam$ satisfies the uniform interpolation property: for any $\cL_\Lambda$ -formula $\psi (\vec q, \vec r)$ with distinguished variables $\vec q$ there exists a \emph{uniform interpolant}, that is, a formula $\phi (\vec q)$ such that for any $\theta (\vec q)$ we have
\[
\gllam \vdash \psi (\vec q, \vec r) \to \theta (\vec q) \ \ \ \ \Longleftrightarrow \ \ \ \ \gllam \vdash \phi (\vec q) \to \theta (\vec q).
\]
\end{corollary}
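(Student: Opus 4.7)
My plan is to derive this corollary from Shamkanov's uniform interpolation for $\glp_\omega$ \cite{Shamkanov:2011:InterpolationProperties}, using the reduction $\gllam \vdash \phi \iff \glp_\omega \vdash \hat\phi$ supplied by the preceding Corollary, in the same spirit as the other corollaries in this list.

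Given $\psi(\vec q, \vec r) \in \cL_\Lambda$, I would let $F := \mods(\psi)$ with increasing enumeration $\xi : \{0,\ldots,n-1\} \to F$, so that $\hat\psi = \xi^{-1}(\psi) \in \cL_n$. Shamkanov's theorem yields a uniform interpolant $\phi_0(\vec q)$ for $\hat\psi$ in $\glp_\omega$; by refining the argument (either by inspecting the construction to check that no modalities outside $\hat\psi$ need be introduced, or equivalently by running the same proof inside the subsystem $\glp_n$), one arranges $\phi_0 \in \cL_n$. Setting $\phi(\vec q) := \xi(\phi_0(\vec q)) \in \cL_F \subseteq \cL_\Lambda$ then provides the candidate uniform interpolant.

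To verify the property, I would take any $\theta(\vec q) \in \cL_\Lambda$, set $G := F \cup \mods(\theta)$ with increasing enumeration $\eta : \{0,\ldots,m-1\} \to G$, and let $\rho$ be the order-preserving injection with $\eta \circ \rho = \xi$, so that $\eta^{-1}(\psi) = \rho(\hat\psi)$ and $\eta^{-1}(\phi) = \rho(\phi_0)$. Theorem~\ref{conserv}, Lemma~\ref{isom}, and Corollary~\ref{cons-omega} together reduce the target equivalence $\gllam \vdash \psi \to \theta \iff \gllam \vdash \phi \to \theta$ to $\glp_\omega \vdash \rho(\hat\psi) \to \eta^{-1}(\theta) \iff \glp_\omega \vdash \rho(\phi_0) \to \eta^{-1}(\theta)$, which is precisely uniform interpolation for $\rho(\hat\psi)$ in $\glp_\omega$ with interpolant $\rho(\phi_0)$ (the latter coincides with Shamkanov's interpolant by the order-preserving modality renaming, once again via Lemma~\ref{isom}).

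The principal obstacle is the language-restriction step above: arranging that the uniform interpolant of a formula in $\cL_n$ can be taken in $\cL_n$ itself, i.e.\ the modal-signature analogue of Craig interpolation's variable-restriction property. I expect this to follow either from inspecting Shamkanov's sequent-calculus proof (where cut-free derivations do not introduce modalities beyond those of the endsequent), or from a parallel adaptation of that proof uniformly to each fragment $\glp_n$. The auxiliary claim that uniform interpolants are preserved under order-preserving renamings of modalities is then immediate from Lemma~\ref{isom}.
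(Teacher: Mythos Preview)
Your proposal is correct and follows exactly the route the paper intends: the paper gives no explicit proof of this corollary, stating only that it ``follows directly'' from Shamkanov's uniform interpolation theorem for $\glp_\omega$ via the reduction $\gllam\vdash\phi\iff\glp_\omega\vdash\hat\phi$. You have spelled out that reduction carefully and, in doing so, isolated the one genuinely non-trivial step the paper leaves implicit --- namely that the uniform interpolant of a formula in $\cL_n$ may itself be taken in $\cL_n$. Your two suggested routes for this (inspecting Shamkanov's cut-free sequent construction, or running the argument uniformly inside each $\glp_n$) are both reasonable; either suffices, and once it is in hand the rest of your argument goes through exactly as written.
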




\section{Worms and their normal forms}\label{section:AlternativeAxiomatization}

In this section we study {\em worms,} or iterated consistency
satements, which in a sense form the backbone of the logic $\gllam^0$
(recall that $\gllam^0$ is the fragment of $\gllam$ which contains no
propositional variables). Worms directly code the ordinals needed
for a proof-theoretic analysis of formal theories. Moreover, as we
shall see, every closed formula of $\gllam^0$ can be written as a
Boolean combination of worms.

Many of the results presented here appeared originally in
\cite{Beklemishev:2005:VeblenInGLP}. The main difference is that we
employ a different --but equivalent, as we shall see-- definition of
normal forms on worms. We also include more details than in
\cite{Beklemishev:2005:VeblenInGLP} and do not use the irreflexivity
of the $<_\alpha$ relations.

\begin{definition}[Worms]
The set of words, or worms, is a subset of $\cL_\Lambda^0$ denoted by $\worms$
and is inductively defined as $\top \in \worms$, and $A\in \worms \
\Rightarrow \ \langle\alpha\rangle A\in \worms$ where $\alpha$ is a modal.

We write $\alpha \in A$ to indicate that $\alpha$ occurs somewhere
in the word $A$. By $\worms_{\alpha}$ we denote $\{ A \in \worms \mid \beta
\in A \Rightarrow \beta \geq \alpha \}$.
\end{definition}

It is customary to identify a worm $A$ with the sequence of the
modals in $A$. Thus, $\langle 0 \rangle \langle 2 \rangle \top$
will be associated with just $02$ but we shall also employ any
hybrid form like $\la 0 \ra 2$, etc. We will associate $\top$ with
the empty sequence/word $\epsilon$. Worms owe their name to the heroic worm-battle, a variant of the Hydra
battle (see \cite{Beklemishev:2006}), but they may also be called
\emph{words}.

\subsection{Natural orderings on $\worms_{\alpha}$}

On the set of worms one can define natural order relations.

\begin{definition}
For $A, B \in \worms$ we define $A <_{\alpha} B \ :\Leftrightarrow \gllam
\vdash B \to \langle \alpha \rangle A$.
\end{definition}

It is clear by Axiom $(\romannumeral3)$ that $<_{\alpha}$ is
transitive for each $\alpha$ and by Axiom $(\romannumeral5)$, that
$<_{\beta} \subseteq <_{\alpha}$ for $\alpha \leq \beta$. In
\cite{Beklemishev:2005:VeblenInGLP} it is shown that assuming
irreflexivity for $<_{\alpha}$, the orderings $<_{\alpha}$ define a
well-order order on $\worms_{\alpha}$ modulo provable equivalence, provided $\Lambda$ is itself well-ordered. Thus in this case,
given irreflexivity, the elements of $\worms_\ga$ can be associated with
ordinals.

The next lemma is the basis of a large portion of our reasoning and
we shall use it in the remainder of this paper without explicit
mention.

\begin{lemma}\label{theorem:basicLemma}
\begin{enumerate}\

\item\label{lemma:basicLemma1}
For closed formulas $\phi$ and $\psi$, if $\beta < \alpha$, then \\
$\gllam \vdash (\langle \alpha \rangle \phi \wedge \langle \beta
\rangle \psi) \leftrightarrow \langle \alpha \rangle(\phi \wedge
\langle \beta \rangle \psi)$;

\item\label{lemma:basicLemma2}
For closed formulas $\phi$ and $\psi$, if $\beta < \alpha$, then \\
$\gllam \vdash (\langle \alpha \rangle \varphi \wedge [ \beta] \psi)
\leftrightarrow \langle \alpha \rangle(\varphi \wedge [ \beta ]
\psi)$;

\item\label{lemma:basicLemma0}
$\gllam \vdash AB \to A$

\item\label{lemma:basicLemma3}
If $A\in \worms_{\alpha+1}$, then $\gllam \vdash A \wedge \langle \alpha
\rangle B \leftrightarrow A\alpha B$;

\item \label{lemma:basicLemma4}
If $A, B \in \worms_{\alpha}$ and $\gllam \vdash A \leftrightarrow B$,
then\\ $\gllam \vdash A\alpha C \leftrightarrow B \alpha C$.
\end{enumerate}
\end{lemma}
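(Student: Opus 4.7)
For parts 1 and 2, I would argue directly from the modal axioms. Part 1 goes as follows: the forward direction applies axiom (iv) to convert $\la\beta\ra\psi$ into $[\alpha]\la\beta\ra\psi$ (legitimate since $\beta<\alpha$), then combines with $\la\alpha\ra\phi$ by standard K-reasoning. For the converse, monotonicity supplies $\la\alpha\ra\phi$, while the contrapositive of axiom (iii), in diamond form $\la\alpha\ra\la\beta\ra\psi \to \la\beta\ra\psi$ (valid for $\beta\leq\alpha$), supplies $\la\beta\ra\psi$. Part 2 is the dual, swapping the roles of axioms (iii) and (iv).

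Part 3 is a routine induction on $A$: the base $A=\top$ is trivial, and the step $A=\la\gamma\ra A'$ lifts the induction hypothesis $A'B\to A'$ through the outer $\la\gamma\ra$ using necessitation and K. Part 4 is induction on $A\in\worms_{\alpha+1}$: in the step $A=\la\gamma\ra A'$ with $\gamma>\alpha$, the IH rewrites $A\alpha B=\la\gamma\ra(A'\alpha B)$ as $\la\gamma\ra(A'\wedge\la\alpha\ra B)$, and then part 1 (applied with the outer modal $\gamma>\alpha$ and the embedded $\la\alpha\ra B$) identifies this with $\la\gamma\ra A'\wedge\la\alpha\ra B = A\wedge\la\alpha\ra B$.

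Part 5 is the main obstacle. I would proceed by induction on $l(A)+l(B)$, splitting on whether $A$ and $B$ lie in $\worms_{\alpha+1}$. If both do, part 4 gives $A\alpha C\equiv A\wedge\la\alpha\ra C$ and $B\alpha C\equiv B\wedge\la\alpha\ra C$, and the conclusion follows from $\vdash A\leftrightarrow B$. Otherwise, if say $B$ has an $\alpha$-modal, decompose $B = B_1\alpha B_2$ with $B_1\in\worms_{\alpha+1}$ the initial segment before the first $\alpha$-modal and $B_2\in\worms_\alpha$ strictly shorter than $B$; part 4 yields $B\equiv B_1\wedge\la\alpha\ra B_2$ and $B\alpha C\equiv B_1\wedge\la\alpha\ra(B_2\alpha C)$. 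Combined with an analogous decomposition of $A$ and the inductive hypothesis applied to the sub-worms, the required equivalence reduces to consequences of $\vdash A\leftrightarrow B$ obtained via K and necessitation.

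The delicate point in part 5 is that the closed-formula equivalence $A\equiv B$ does not lift by naive substitution to $A[\top\mapsto\phi]\equiv B[\top\mapsto\phi]$ for arbitrary $\phi$, so one cannot simply appeal to a parametric substitution principle; it is only the specific interaction between the substituent $\la\alpha\ra C$ and the constraint that all modals of $A,B$ are $\geq\alpha$ that makes the decomposition via parts 1 and 4 go through. An auxiliary subtlety is the base case $A=\top$: one must argue that $B=\top$ as well, which uses that no nontrivial worm is provable in $\gllam$, a fact available via Theorem~\ref{conserv} and the corresponding property of $\glp_\omega$.
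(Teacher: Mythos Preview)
Your treatment of parts 1--4 coincides with the paper's: part~1 via axioms (iv) and (iii), part~2 by the dual argument, part~3 by induction on $l(A)$, and part~4 by iterating part~1 from the outside in.

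For part~5 the paper's proof is a single clause: it ``follows from Item~\ref{lemma:basicLemma3}.'' The intended argument is simply that part~4 gives $A\alpha C\leftrightarrow A\wedge\langle\alpha\rangle C$ and $B\alpha C\leftrightarrow B\wedge\langle\alpha\rangle C$, whence $A\leftrightarrow B$ immediately yields $A\alpha C\leftrightarrow B\alpha C$. Strictly speaking this invokes part~4 only under the stronger hypothesis $A,B\in\worms_{\alpha+1}$, but that is the only case ever used downstream.

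Your more elaborate induction on $l(A)+l(B)$, meant to handle the stated hypothesis $A,B\in\worms_\alpha$, does not close as written. After decomposing $A=A_1\alpha A_2$ and $B=B_1\alpha B_2$ with $A_1,B_1\in\worms_{\alpha+1}$, the assumption $A\equiv B$ yields only $A_1\wedge\langle\alpha\rangle A_2 \equiv B_1\wedge\langle\alpha\rangle B_2$; it does \emph{not} give $A_1\equiv B_1$ or $A_2\equiv B_2$ separately, so there is no shorter pair of provably equivalent worms to feed to the induction hypothesis. The phrase ``reduces to consequences of $\vdash A\leftrightarrow B$ obtained via K and necessitation'' papers over exactly this gap: necessitation and K let you \emph{prefix} a modality to both sides of $A\equiv B$, but they do not let you thread the suffix $\langle\alpha\rangle C$ through the internal $\alpha$-structure of $A$ and $B$. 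Your side remark about the base case $A=\top$ requiring irreflexivity (via Theorem~\ref{conserv}) is also misplaced: first, $\top\in\worms_{\alpha+1}$, so that case is already covered by the direct appeal to part~4; second, this lemma is deliberately proved \emph{before} irreflexivity in the paper's development and should not depend on it.
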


\begin{proof}
For \ref{lemma:basicLemma1}, we observe that by Axiom
$(\romannumeral4)$ we have $\langle \beta \rangle \psi \to [\alpha]
\langle \beta \rangle \psi$, whence $\langle \alpha \rangle \phi
\wedge \langle \beta \rangle \psi \to \langle \alpha \rangle (\phi
\wedge \langle \beta \rangle \psi)$. For the other direction, we
note that $\langle \alpha \rangle (\phi \wedge \langle \beta \rangle
\psi) \to \langle \alpha \rangle \langle \beta \rangle \psi$ and the
antecedent implies $\langle \beta \rangle \psi$ by Axiom
$(\romannumeral3)$.

The proof of \ref{lemma:basicLemma2} is similar. By Axiom
$(\romannumeral3)$ we see that $[\beta]\psi \to
[\alpha][\beta]\psi$, whence $\langle \alpha \rangle \phi \wedge
[\beta] \psi \to \langle \alpha \rangle(\phi \wedge [\beta]\psi)$.
For the other direction, we use Axiom $(\romannumeral4)$ to get
$\langle \beta \rangle \neg \psi \to [\alpha ]\langle \beta \rangle
\neg \psi$. Thus,
\[
\begin{array}{lll}
\langle \alpha \rangle(\phi \wedge [\beta]\psi) \wedge \langle \beta \rangle \neg \psi& \to & \langle \alpha \rangle \bot\\
& \to & \bot,
\end{array}
\]
whence $\langle \alpha \rangle(\phi \wedge [\beta]\psi) \to [\beta]
\psi$.

Item \ref{lemma:basicLemma0} is proven by induction on the length of
$A$. For zero length we see that $A = \top$. For the inductive case
we reason in $\gllam$ and consider $\langle \alpha \rangle A B$. By a
necessitation on the induction hypothesis we get $[\alpha] (AB \to
A)$. Using Axiom $(\romannumeral1)$, we see that $\langle \alpha
\rangle A B \wedge [\alpha] (AB \to A) \to \langle \alpha \rangle
A$. We shall later see that in general $\nvdash AB \to B$.

Item \ref{lemma:basicLemma3} follows from repeatedly applying
\ref{lemma:basicLemma1} (from outside in), and Item
\ref{lemma:basicLemma4} follows from Item \ref{lemma:basicLemma3}.
\end{proof}
Using the $<_{\alpha}$ relation we can define a normal form for
worms.

\begin{definition}[worm normal form]
A worm $A \in \worms$ is in WNF (worm normal form) iff
\begin{enumerate}
\item
$A = \epsilon$, or

\item
$A$ is of the form $A_k\alpha \ldots \alpha A_1$ with $\alpha =
\min(A)$, $k\geq1$ and $A_i \in \worms_{\alpha+1}$ such that each $A_i$
is in WNF and moreover
$A_{i+1} \leq_{\alpha+1}A_{i}$ for each $i<k$.
\end{enumerate}
\end{definition}

We note that the definition of WNF refers to provability in
$\gllam$ every time it states $A_{i+1} \leq_{\alpha+1}A_{i}$ : recall
that the latter is short for $\gllam  \vdash A_{i} \to \langle \alpha
+1 \rangle A_{i+1}$ or $\gllam\vdash A_i \leftrightarrow A_{i+1}$. In virtue of Theorem
\ref{conserv} we can replace the use of $\gllam$ by its relevant
fragment of finite signature.

\begin{lemma}\label{theorem:WidthOneIsWNF}
Each worm of width one is in WNF.
\end{lemma}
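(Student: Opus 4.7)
The plan is to show this as a direct verification of the WNF definition. First, I would observe that since $w(A)$ counts the distinct modals appearing in $A$, a worm of width one is necessarily nonempty and contains exactly one modal, so it must be of the form $A=\alpha^n=\underbrace{\langle\alpha\rangle\cdots\langle\alpha\rangle}_{n\text{ times}}\top$ for some modal $\alpha$ and some $n\geq 1$.

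Given such an $A$, the natural decomposition is $A = A_{n+1}\,\alpha\, A_n\,\alpha\, \cdots\, \alpha\, A_1$ with $k:=n+1\geq 1$ blocks and each block $A_i$ equal to the empty worm $\epsilon$. I would then run through the four conditions of the second clause of the WNF definition: (i) $\min(A)=\alpha$ holds because $\alpha$ is the only modal in $A$; (ii) the blocks lie in $\worms_{\alpha+1}$ vacuously, since $A_i=\epsilon$ contains no modals at all; (iii) each $A_i=\epsilon$ is in WNF by the base clause of the definition; and (iv) the comparison $A_{i+1}\leq_{\alpha+1} A_i$ reduces in each case to $\epsilon\leq_{\alpha+1}\epsilon$, which holds trivially since $\epsilon$ is $\gllam$-provably equivalent to itself (recall that $\leq_{\alpha+1}$ by definition includes the case of provable equivalence).

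There is no substantive obstacle here; the whole content is to parse the WNF definition on the uniquely-shaped input. The only place where one should take care is the distinction between width and length: the lemma uses the former, so repeating the same modal never increases width, which is exactly why the decomposition with all $A_i=\epsilon$ is both available and forced.
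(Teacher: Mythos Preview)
Your proof is correct and takes essentially the same approach as the paper, which simply remarks that the result is immediate once one conceives $\alpha^n$ as $\epsilon\alpha\epsilon\ldots\epsilon\alpha\epsilon$. You have just unpacked this observation by explicitly verifying each clause of the WNF definition against the decomposition with all blocks equal to $\epsilon$.
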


\begin{proof}
This is immediate if we conceive $\alpha^n$ as
$\epsilon\alpha\epsilon\ldots\epsilon\alpha\epsilon$.
\end{proof}

We emphasize that WNFs on worms are rather similar in form to Cantor
normal forms (CNF) with base $\omega$ on ordinals.  A notable
difference is that where ordinals in CNF have their largest terms on
the left-hand side, worms have their largest ``term" on the
right-hand side.

Lemma \ref{theorem:WormsAreLikeCNFs} below tells us that, in order
to compare two worms in WNF it suffices to compare, just as with
CNFs, the largest non-equal components. As a slight abuse of
notation, we will often write a worm $A$ in the form $A_k \alpha
\ldots A_1$, with the understanding that $A=\epsilon$ when $k=0$ and
$A=A_1$ when $k=1$.

\begin{lemma}\label{theorem:WormsAreLikeCNFs}
Let $A = A_k \alpha \ldots A_1\alpha A'$ be in WNF with $\alpha =
\min(A)$, and each $A_i\in \worms_{\alpha+1}$. Moreover, let $B$ be in
WNF. We have that
\[
\mbox{if } A' <_{\alpha +1} B, \mbox{ then }   A <_{\alpha +1} B.
\]
\end{lemma}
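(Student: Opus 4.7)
I will prove the statement by induction on $k$. When $k=0$ the claim collapses to $A=A'$ and is immediate, so assume $k\geq 1$ and set $\bar A := A_{k-1}\alpha\ldots A_1\alpha A'$, so that $A=A_k\alpha\bar A$. Reading the WNF structure of $A$ (with $A'$ playing the role of the innermost piece in the canonical decomposition), one obtains the chain $A_k\leq_{\alpha+1}A_{k-1}\leq_{\alpha+1}\cdots\leq_{\alpha+1}A_1\leq_{\alpha+1}A'$, and the corresponding WNF conditions are inherited by $\bar A$; in particular $\bar A$ is itself in WNF (a degenerate WNF consisting only of $A'$ when $k=1$). The inductive hypothesis applied to $\bar A$ therefore yields $B\to \langle \alpha+1\rangle\bar A$ from $A'<_{\alpha+1}B$.

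From this I extract the two ingredients needed to reassemble $A$. On the one hand, Axiom (v) (in the contrapositive form $\langle\alpha+1\rangle\bar A\to\langle\alpha\rangle\bar A$) gives $B\to\langle\alpha\rangle\bar A$. On the other hand, transitivity of $\leq_{\alpha+1}$ yields $A_k\leq_{\alpha+1}A'$, and then transitivity of $\langle\alpha+1\rangle$ (which follows from L\"ob's Axiom (ii) via the standard derivation of the $4$-axiom for $[\alpha+1]$), combined with the hypothesis $B\to\langle\alpha+1\rangle A'$, yields $B\to\langle\alpha+1\rangle A_k$. Conjoining these two consequences,
\[
\gllam \vdash B\to \langle\alpha+1\rangle A_k\wedge\langle\alpha\rangle\bar A.
\]

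Now Lemma~\ref{theorem:basicLemma}.\ref{lemma:basicLemma1}, instantiated with $\alpha+1$ in place of its $\alpha$ and $\alpha$ in place of its $\beta$, rewrites the right-hand side as $\langle\alpha+1\rangle(A_k\wedge\langle\alpha\rangle\bar A)$, and Lemma~\ref{theorem:basicLemma}.\ref{lemma:basicLemma3} (applicable because $A_k\in\worms_{\alpha+1}$) identifies the inner conjunction with $A_k\alpha\bar A = A$. Hence $B\to\langle\alpha+1\rangle A$, which is exactly $A<_{\alpha+1}B$.

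The main point of care is bookkeeping the WNF structure when $A_k\alpha$ is peeled off: one must verify that $\bar A$ is genuinely in WNF (especially in the degenerate case $k=1$, where $\bar A = A'\in\worms_{\alpha+1}$) and that the $\leq_{\alpha+1}$-chain is read off correctly so that $A_k\leq_{\alpha+1}A'$ is actually available. Once these points are settled, the remaining steps are routine applications of the basic lemma together with Axioms (ii) and (v).
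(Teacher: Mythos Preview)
Your proof is correct and follows essentially the same induction-on-$k$ strategy as the paper: peel off $A_k$, apply the inductive hypothesis to $\bar A$, and reassemble via Lemma~\ref{theorem:basicLemma}. The one small difference is how you obtain $B\to\langle\alpha+1\rangle A_k$: the paper extracts it from the inductive hypothesis $B\to\langle\alpha+1\rangle\bar A$ by noting $\bar A\to D$ for the leading block $D$ and using the single WNF step $D\geq_{\alpha+1}A_k$, whereas you go directly from the hypothesis $A'<_{\alpha+1}B$ together with the full WNF chain $A_k\leq_{\alpha+1}A'$, which is arguably a touch cleaner since it avoids decomposing $\bar A$ further.
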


\begin{proof}
By induction on $k$. We write $A$ as $A_{k}\alpha \tila$. As
$A_{k}\alpha \tila$ is in WNF and $\alpha = \min(A)$, we see that
necessarily $\tila$ is of the form $\twoprime\alpha \threeprime$
with $\twoprime \in \worms_{\alpha+1}$ and the $\alpha \threeprime$ part
possibly empty. By the IH (or by assumption in case $k=0$), we see
that $ \tila<_{\alpha+1} B $, from which we obtain
\[
\begin{array}{rlll}
B & \to & \langle \alpha +1 \rangle \tila& \\
 & \to & \langle \alpha +1 \rangle (\twoprime \alpha \threeprime) & \\
 & \to & \langle \alpha +1 \rangle \twoprime \wedge \langle \alpha +1 \rangle \tila& \mbox{as $\twoprime \geq_{\alpha+1} A_{k}$}\\
 & \to & \langle \alpha +1 \rangle A_{k} \wedge \langle \alpha +1 \rangle \tila& \\
 & \to & \langle \alpha +1 \rangle A_{k} \wedge \langle \alpha  \rangle \tila& \\
 & \to & \langle \alpha +1 \rangle A_{k}  \alpha  \tila.& \\
 \end{array}
 \]
In other words, $ A_{k}  \alpha  \tila<_{\alpha +1}B$ and we are
done. Note that the proof also works for $A'=\epsilon$ in which case
$A$ is just of the form $\alpha^m$ for some $m\in \omega$.
\end{proof}

Let us introduce some special notation for worms in WNF.

\begin{definition}
We denote $\worms_{\alpha} \ \cap $ WNF by $\NF_\alpha$.
\end{definition}

\begin{lemma}\label{theorem:LinearOrder}
For all $A, B\in \NF_\alpha$, either $A=B$, $A<_{\alpha}B$ or $B<_{\alpha}A.$
\end{lemma}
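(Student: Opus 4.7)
My plan is to prove this by induction on $l(A) + l(B)$. The base case, where both worms are empty, is immediate. If exactly one is empty---say $A$---then $B$ satisfies $B \to \langle \min B \rangle \top \to \langle \alpha \rangle \top = \langle \alpha \rangle A$ by axiom~(v), so $A <_\alpha B$.

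For the inductive step, assume both $A,B$ are non-empty, with $\beta = \min A \leq \gamma = \min B$, and let $A_0, B_0$ denote the rightmost blocks of the WNF decompositions of $A, B$. A useful preliminary observation, derived by iterated application of item~4 of Lemma~\ref{theorem:basicLemma} (the splitting identity $A \wedge \langle \beta \rangle B \leftrightarrow A\beta B$ for $A \in \worms_{\beta+1}$) together with transitivity of $\langle \beta \rangle$, is that $A \to \langle \beta \rangle A_0$; similarly for $B$. In the subcase $\beta < \gamma$, we have $B, A_0 \in \NF_{\beta+1}$, and I apply the inductive hypothesis to $A_0$ and $B$ (of strictly smaller total length). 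Each of the three resulting possibilities is resolved by combining Lemma~\ref{theorem:WormsAreLikeCNFs}, the derived $A \to \langle \beta \rangle A_0$, monotonicity, and transitivity of $\langle \alpha \rangle$: e.g., $A_0 <_{\beta+1} B$ gives $A <_{\beta+1} B$ by Lemma~\ref{theorem:WormsAreLikeCNFs} and hence $A <_\alpha B$, while $B <_{\beta+1} A_0$ or $A_0 \equiv B$ yields $B <_\alpha A$ by chaining $A \to \langle \alpha \rangle A_0 \to \langle \alpha \rangle B$ (any $\langle \alpha \rangle \langle \beta+1 \rangle$ intermediate being collapsed via monotonicity and transitivity).

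In the subcase $\beta = \gamma$, write $A = A^* \beta A_0$ and $B = B^* \beta B_0$ and apply IH to $A_0, B_0 \in \NF_{\beta+1}$. When $A_0$ and $B_0$ are inequivalent, the same techniques as above conclude the trichotomy. The remaining case $A_0 \equiv B_0$ is the crux: using necessitation together with the K-axiom I substitute equivalents inside $\langle \beta \rangle$ to arrange that $A$ and $B$ share a common rightmost block $C$, and then apply IH to $A^*, B^* \in \NF_\alpha$ (both strictly shorter). If $A^* \equiv B^*$ then $A \equiv B$ follows from the substitution identity (item~5 of Lemma~\ref{theorem:basicLemma}); the strict cases reduce to lifting $A^* <_\alpha B^*$ to $A <_\alpha B$.

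This lifting is the principal obstacle: it is a monotonicity property asserting that $<_\alpha$ is preserved under right-concatenation with $\beta C$, and does not follow immediately from the elementary axioms. I expect to handle it by a subsidiary induction---for instance, on the leftmost-block structure of $A^*, B^*$, iteratively peeling off outer blocks via the splitting identity and using axioms~(iii)--(v) to propagate the witness for $\langle \alpha \rangle A^*$ through a $\beta$-successor---or, more expediently, by invoking Theorem~\ref{conserv} to reduce the question to $\glp_\omega$, where the analogous monotonicity for worms is standard.
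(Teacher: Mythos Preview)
Your proposal correctly identifies the overall strategy---compare blocks from the right and invoke Lemma~\ref{theorem:WormsAreLikeCNFs}---but the step you flag as ``the principal obstacle'' is a genuine gap, and neither of your suggested remedies closes it. The right-concatenation monotonicity you need (from $A^*<_\alpha B^*$ to $A^*\beta C<_\alpha B^*\beta C$) does not follow from the elementary splitting identities, and invoking Theorem~\ref{conserv} to import it from $\glp_\omega$ is circular: the corresponding fact for $\glp_\omega$ is itself a consequence of the linearity lemma there (together with irreflexivity, which this section is deliberately avoiding). A model-theoretic argument in $\glp_\omega$ would work but would defeat the finitary, syntactic character of the section.

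The paper sidesteps the lifting entirely by organizing the induction differently. Rather than stripping off a common rightmost block and recursing on the truncations $A^*,B^*$, it locates in one step the least index $n$ (from the right) with $A_n\neq B_n$, so that $A_i=B_i$ for all $i<n$. The inductive hypothesis (on $w(AB)$, not on length) then yields, say, $A_n<_{\beta+1}B_n$. Because the suffix blocks agree, one gets directly
\[
B_n\beta B_{n-1}\cdots\beta B_1 \;\leftrightarrow\; B_n\wedge\beta A_{n-1}\cdots\beta A_1 \;\to\; \langle\beta{+}1\rangle A_n\wedge\beta A_{n-1}\cdots\beta A_1 \;\leftrightarrow\; \langle\beta{+}1\rangle(A_n\beta\cdots\beta A_1),
\]
using only the splitting identity (the prefix $\langle\beta{+}1\rangle A_n$ lies in $\worms_{\beta+1}$). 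A single application of Lemma~\ref{theorem:WormsAreLikeCNFs} absorbs the remaining left blocks of $A$, giving $A<_{\beta+1}B_n\beta\cdots\beta B_1$; the left blocks of $B$ are then handled by the trivial $B_n\beta\cdots\beta B_1\leq_\beta B$. No lifting lemma is required. If you want to rescue your length-induction, the fix is to carry the common suffix along rather than discard it: compare $A_0,B_0$; if they differ, argue exactly as above; if they agree, recurse---but with the explicit goal of producing the first differing block and its $\langle\beta{+}1\rangle$-witness, not merely a $<_\alpha$-comparison of the truncations.
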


\begin{proof}
We may assume that $\alpha \in AB$. For if this were not the case,
we prove the lemma for $A,B \in \worms_{\beta}$ where $\beta=\min(A,B)$
and see that $\gllam \vdash A\to \langle \beta \rangle B$ implies
$\gllam \vdash A\to \langle \alpha \rangle B$. In case $\beta$ does
not exist we have $AB=A=B=\epsilon$.

We will prove the lemma by induction on $w(AB)$. Recall that by our convention, $A_k
\alpha \ldots A_1\alpha A'$ should be understood to denote $A'$ for
$k=0$ and $A_1\alpha A'$ for $k=1$.

For $w(AB)\leq 1$ and $A\neq B$ we see that $ l(A) < l(B)\
\Rightarrow \  A <_{\alpha} B$ thus obtaining our result as either
$l(A) < l(B)$ or $l(B) < l(A)$.

We now consider $w(AB)>1$. Suppose that $A\neq B$. We may assume
that none of $A$ or $B$ is a proper extension of the other, for if,
for example, $B$ were a proper extension of $A$, then $A<_{\alpha}B$
by Axiom $(\romannumeral3)$. Thus, we write $A = A_k\alpha \ldots
A_n \alpha \ldots A_1$ and $B = B_m \alpha \ldots B_n\alpha \ldots
B_1$ where $n$ is the smallest number such that $A_n \neq B_n$. By
the IH we may, w.l.o.g.\ assume that $A_n <_{\alpha+1} B_n$.
Reasoning in \gllam we see that
\[
\begin{array}{lll}
B_n \to \langle \alpha +1 \rangle A_n & \Rightarrow & B_n \wedge \alpha \ldots B_1 \to \langle \alpha +1 \rangle A_n \wedge \alpha \ldots A_1\\
 & \Rightarrow & B_n  \alpha \ldots B_1 \to \langle \alpha +1 \rangle (A_n  \alpha \ldots A_1).\\
\end{array}
\]

By Lemma \ref{theorem:WormsAreLikeCNFs} we conclude that $A
<_{\alpha+1} B_n \alpha \ldots B_1$. As clearly $B_n  \alpha \ldots
B_1\leq_{\alpha} B$ we obtain $A<_{\alpha}B$ as desired.
\end{proof}
Note that it is necessary to require that $A,B \in \worms_{\alpha}$ in
the above lemma: as we shall see, the normal forms $1$ and $01$ are
$<_1$-incomparable. It is easy to see that the proof of the lemma
automatically yields the following corollary.

\begin{corollary}
Consider two worms $A = A_m\alpha \ldots \alpha A_1$ and $B=
B_n\alpha \ldots \alpha B_1$ both in $\NF_\alpha$ with $A_i, B_j \in
\NF_{\alpha+1}$, and not all the $A_i$ nor all the $B_j$ empty. Let
$<^L_{\alpha+1}$ denote the lexicographical ordering on finite
strings over $\worms_{\alpha +1}$ induced by $<_{\alpha +1}$. We have
that
\[
A<_\alpha B \ \Leftrightarrow \ (A_1, \ldots, A_m) <_{\alpha +1}^L
(B_1, \ldots , B_n).
\]
\end{corollary}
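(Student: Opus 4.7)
The plan is to establish the two directions of the biconditional separately, recycling the argument of Lemma~\ref{theorem:LinearOrder} wherever possible. For the $(\Leftarrow)$ direction, assume $(A_1,\ldots,A_m) <^L_{\alpha+1} (B_1,\ldots,B_n)$; by definition of lexicographic order this splits into the \emph{prefix case} ($m<n$ and $A_i=B_i$ for $i\leq m$) and the \emph{first-difference case} (there is a least $j\leq\min(m,n)$ with $A_i=B_i$ for $i<j$ and $A_j<_{\alpha+1}B_j$).

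In the prefix case $B=B_n\alpha\cdots\alpha B_{m+1}\alpha A$, so as a sequence of diamonds $B$ reads $\langle\mu_1\rangle\cdots\langle\mu_k\rangle\langle\alpha\rangle A$ with every $\mu_i\geq\alpha$. To derive $B\to\langle\alpha\rangle A$ I would prove the auxiliary claim: whenever $\phi=\mu_1\cdots\mu_k\alpha\psi$ is a worm with every $\mu_i\geq\alpha$, then $\gllam\vdash\phi\to\langle\alpha\rangle\psi$. This is proved by a short induction on $k$ in which Axiom~(v) weakens the outermost $\langle\mu_1\rangle$ into $\langle\alpha\rangle$, after which the derivable transitivity $\langle\alpha\rangle\langle\alpha\rangle\chi\to\langle\alpha\rangle\chi$ (dual to the 4-schema, available from Axiom~(iii) with $\alpha=\beta$, or directly from L\"ob) collapses the two $\alpha$-diamonds. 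In the first-difference case, the bookkeeping already present in the proof of Lemma~\ref{theorem:LinearOrder} applies without essential change: from $A_j<_{\alpha+1}B_j$ together with the common entries $A_i=B_i$ for $i<j$ one obtains $B_n\alpha\cdots\alpha B_j\to\langle\alpha+1\rangle(A_m\alpha\cdots\alpha A_j)$ by Lemma~\ref{theorem:WormsAreLikeCNFs}, and the rest of the derivation proceeds exactly as in Lemma~\ref{theorem:LinearOrder} to conclude $A<_\alpha B$.

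For the $(\Rightarrow)$ direction I plan to argue by trichotomy. A coordinatewise application of Lemma~\ref{theorem:LinearOrder} to the components $A_i,B_j\in\NF_{\alpha+1}$ shows that $<^L_{\alpha+1}$ is a strict total order on the relevant tuples of normal forms, so exactly one of $(A_i)=(B_j)$, $(A_i)<^L_{\alpha+1}(B_j)$, or $(B_j)<^L_{\alpha+1}(A_i)$ holds. The third possibility, combined with the already-proved $(\Leftarrow)$ direction, yields $B<_\alpha A$, and hence via transitivity of $<_\alpha$ (immediate from Axiom~(iii) with $\alpha=\beta$) the absurdity $A<_\alpha A$; the first possibility forces $A=B$ and then the same absurdity. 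Both are ruled out once irreflexivity is in hand, leaving $(A_i)<^L_{\alpha+1}(B_j)$ as the only option. The only genuinely new piece of work is the auxiliary claim for the prefix case; everything else is a faithful repackaging of the argument of Lemma~\ref{theorem:LinearOrder}.
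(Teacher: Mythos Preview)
Your argument is correct, and the $(\Leftarrow)$ half is precisely what the paper has in mind: the proof of Lemma~\ref{theorem:LinearOrder} is already organised as a case split on the lexicographic comparison of the component tuples, and your prefix/first-difference cases simply make that structure explicit. The paper's own ``proof'' is just the remark that the corollary follows from the proof of that lemma, so on this half you and the paper agree.

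Where you diverge is in the $(\Rightarrow)$ direction, for which you invoke irreflexivity. This is correct and indeed unavoidable---without it one cannot exclude, say, $A<_\alpha A$ while $(A_i)=(A_i)$---but it is worth noting that the corollary sits in Section~\ref{section:AlternativeAxiomatization}, and the paper explicitly asserts at the opening of Section~\ref{section:NormalFormClosedFormulas} that no irreflexivity has been used up to that point. What the proof of Lemma~\ref{theorem:LinearOrder} delivers unaided is really only the $(\Leftarrow)$ implication, which is all that the subsequent decision procedure (Corollary~\ref{theorem:effectiveComparisonOfWorms}) actually requires. Your trichotomy argument makes the hidden dependence of the full biconditional on irreflexivity explicit; since irreflexivity is established independently via the reduction to $\glp_\omega$ (Theorem~\ref{theorem:Irreflexive}) there is no circularity, and your proof is a clean way to close the gap the paper leaves implicit.
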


The above considerations are sufficient to give an effective procedure for deciding the ordering on worms, provided we have a procedure for ordering $\Lambda$.

\begin{definition}
We call a procedure $\Lambda$-effective if it is effective using an
oracle for deciding $\alpha< \beta$ for $\alpha, \beta \in |\Lambda|$.
\end{definition}

\begin{corollary}\label{theorem:effectiveComparisonOfWorms}
There is a $\Lambda$-effective procedure that compares two worms in
$\NF_{\alpha}$.
\end{corollary}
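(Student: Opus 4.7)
The plan is to describe a recursive decision procedure for $<_\alpha$ on $\NF_\alpha$, with recursion on the combined length $l(A) + l(B)$ of the two input worms, whose only oracle calls are pairwise comparisons of modals occurring in $A$ or $B$. Correctness will rest on the lexicographic characterization supplied by the preceding corollary, together with the trichotomy of Lemma \ref{theorem:LinearOrder}.

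The procedure runs as follows. First, dispose of the base cases: output ``equal'' when $A = B$ syntactically (using the oracle to compare their respective modals in order); output $A <_\alpha B$ when $A = \epsilon$ and $B \neq \epsilon$ (symmetrically when the roles are reversed), justified by the fact that any non-empty $B \in \worms_\alpha$ begins with some diamond $\langle \beta \rangle$ with $\beta \geq \alpha$, so $B \to \langle \beta \rangle \top \to \langle \alpha \rangle \top = \langle \alpha \rangle A$ by axiom (v). Otherwise, call the oracle to compute $\gamma := \min(\mods A \cup \mods B)$ by a linear scan, and decompose $A = A_m \gamma \ldots \gamma A_1$ and $B = B_n \gamma \ldots \gamma B_1$ with each component in $\NF_{\gamma+1}$; when $\gamma$ fails to occur in $A$, take $m = 1$ and $A_1 = A$, and analogously for $B$. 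By the preceding corollary, $A <_\gamma B$ is equivalent to $(A_1, \ldots, A_m) <^L_{\gamma+1} (B_1, \ldots, B_n)$, which the algorithm evaluates by recursively comparing the $A_i$ against the $B_i$ from $i = 1$ upward, halting at the first strict inequality or --if all $\min(m,n)$ common entries agree-- returning the shorter tuple as the smaller one. Any strict conclusion established at level $\gamma$ transfers to level $\alpha$ via axiom (v), since $\gamma \geq \alpha$.

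Termination is immediate, as every recursive invocation is made on worms of strictly smaller length: stripping the $\gamma$-separators yields components each of length at most $l(A) - 1$ (respectively $l(B) - 1$). The main subtlety I anticipate is bookkeeping in the decomposition step: the case where $\gamma$ occurs in only one of the two worms must be aligned with the preceding corollary, whose hypothesis asks that not all of the $A_i$ nor all of the $B_j$ be empty. This is absorbed by permitting a single-component tuple when $\gamma \notin \mods A$; the lex comparison then simply recurses on that lone component against the rightmost components of the other tuple, which is precisely what the preceding corollary requires.
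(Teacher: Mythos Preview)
Your approach is essentially the same as the paper's: both extract an algorithm from the proof of Lemma~\ref{theorem:LinearOrder}, decomposing at the minimal modal and recursing on the components. The paper phrases its base case as $w(AB)\le 1$ (compare lengths) rather than your $A=\epsilon$ or $A=B$, but this is cosmetic.

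One small gap to fix in your termination argument: you claim that ``stripping the $\gamma$-separators yields components each of length at most $l(A)-1$ (respectively $l(B)-1$)'', but this is false precisely in the single-component case you discuss at the end, where $\gamma\notin\mods A$ and $A_1=A$ has length $l(A)$. What \emph{does} hold is that $\gamma$ occurs in at least one of $A,B$ (it is their joint minimum and neither is empty at this stage), so in any recursive call on a pair $(A_i,B_i)$ at least one side has lost a $\gamma$-separator, and hence $l(A_i)+l(B_i)<l(A)+l(B)$. That is the correct invariant for your combined-length measure; the paper states the analogous point as ``$l(AB)$ will diminish''. A second, more pedantic point: the lexicographic corollary you invoke carries the side condition ``not all the $A_i$ nor all the $B_j$ empty'', which is not automatically met after your base cases (e.g.\ $A=\gamma^k$ has all components empty). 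The characterization still holds in that situation, but you should either note that the proof of Lemma~\ref{theorem:LinearOrder} already covers it, or handle $w(AB)\le 1$ separately as the paper does.
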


\begin{proof}
The $\Lambda$-effective decision procedure is already present in the
proof. For $w(AB) \leq 1$ deciding whether $A<_{\alpha} B$ amounts
to counting and comparing the number of symbols in $A$ and $B$. For
$w(AB) > 1$ this amounts to checking for first checking equality.
This we can do, as we can pose oracle queries on elements in the
$\langle |\Lambda|,<\rangle$ ordering. If $A \neq B$, we look at the
first (from the right) non-equal term in $A$ and $B$ and recursively
call upon our decision procedure. Note that in this case $l(AB)$
will diminish so we have an effective bound on the amount of calls
on the decision procedure.
\end{proof}

Next we formulate an obvious corollary to lemma
\ref{theorem:LinearOrder} that will be very useful later on.

\begin{corollary}\label{theorem:ComparingWorms}
For each $A,B \in \NF_{\alpha}$, either $\gllam \vdash \alpha A \to
\alpha B$, or $\gllam \vdash \alpha B \to \alpha A$.
\end{corollary}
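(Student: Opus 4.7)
The plan is to deduce this directly from Lemma \ref{theorem:LinearOrder}, which gives us the trichotomy $A = B$, $A <_\alpha B$, or $B <_\alpha A$ for worms in $\NF_\alpha$. I would treat each of the three cases and check that in each one, at least one of the desired implications is provable in $\gllam$.

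In the case $A = B$ both implications hold trivially. In the case $A <_\alpha B$, by definition we have $\gllam \vdash B \to \langle\alpha\rangle A$. Applying necessitation and axiom $(\romannumeral1)$ yields $\gllam \vdash \langle\alpha\rangle B \to \langle\alpha\rangle\langle\alpha\rangle A$. The transitivity of $\langle\alpha\rangle$, which is derivable from the Löb axiom $(\romannumeral2)$ in the usual way (i.e., $\gllam \vdash [\alpha]\phi \to [\alpha][\alpha]\phi$, hence $\gllam \vdash \langle\alpha\rangle\langle\alpha\rangle A \to \langle\alpha\rangle A$), then gives us $\gllam \vdash \langle\alpha\rangle B \to \langle\alpha\rangle A$, that is, $\gllam \vdash \alpha B \to \alpha A$. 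The case $B <_\alpha A$ is completely symmetric and produces $\gllam \vdash \alpha A \to \alpha B$.

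There is essentially no obstacle here; the only step that requires any remark is the appeal to transitivity of $\langle\alpha\rangle$, which is standard for $\gl$-like logics and follows immediately from axiom $(\romannumeral2)$. Since Lemma \ref{theorem:LinearOrder} has already done the real work of linearizing $\NF_\alpha$ under $<_\alpha$, the corollary reduces to observing that $<_\alpha$ and provable implication between $\alpha$-prefixed worms match up as expected.
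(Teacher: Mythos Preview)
Your proof is correct and follows essentially the same route as the paper: invoke the trichotomy of Lemma~\ref{theorem:LinearOrder}, and in the nontrivial case $A<_\alpha B$ pass from $B\to\langle\alpha\rangle A$ to $\langle\alpha\rangle B\to\langle\alpha\rangle\langle\alpha\rangle A$ and then collapse using transitivity. The only cosmetic difference is that you justify $\langle\alpha\rangle\langle\alpha\rangle A\to\langle\alpha\rangle A$ via L\"ob's axiom~$(\mathrm{ii})$, whereas in this axiomatization it is already available directly as the case $\beta=\alpha$ of axiom~$(\mathrm{iii})$.
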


\begin{proof}
All implications in this proof refer to implications inside $\gllam$.
By Lemma \ref{theorem:LinearOrder} we have $A\leq_{\alpha}B$ or
$B<_{\alpha}A$. If $A=B$ the implication is clearly provable. If
$A<_{\alpha} B$, then $B\to \alpha A$ whence $\alpha B \to \alpha
\alpha A$, and $\alpha B \to \alpha A$. Likewise, $B<_{\alpha}A$
implies $\alpha B \to \alpha A$.
\end{proof}

\begin{corollary}\label{theorem:WormsClosedUnderConjunction}
Given worms $A,B \in \NF_{\alpha}$, there is a worm $C\in
\worms_{\alpha}$ with $\gllam \vdash A\wedge B \leftrightarrow C$.
Moreover, we have that ${\mods}(C) \subseteq
{\mods}(AB)$, and $l(C)\leq l(AB)$.
\end{corollary}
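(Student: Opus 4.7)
I plan to proceed by induction on $l(AB)$. If $A = \epsilon$ (resp.\ $B = \epsilon$), take $C := B$ (resp.\ $C := A$). Otherwise both are non-empty, and we let $\gamma := \min {\mods}(AB)$, so $\gamma \geq \alpha$. The argument then splits on whether $\gamma$ appears in one or both of $A, B$.

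Suppose first that $\gamma$ occurs in only one of the worms, say $A$ (the other direction being symmetric). Then $\gamma = \min A$ and $B \in \worms_{\gamma+1}$. Writing the WNF of $A$ as $A = A_k\gamma A'$ with $A_k \in \NF_{\gamma+1}$ the outermost block, Lemma~\ref{theorem:basicLemma}(\ref{lemma:basicLemma3}) gives $A \equiv A_k \wedge \langle\gamma\rangle A'$, and hence $A \wedge B \equiv (A_k \wedge B) \wedge \langle\gamma\rangle A'$. The induction hypothesis applied to $A_k, B \in \NF_{\gamma+1}$ (whose total length is strictly smaller than $l(AB)$) yields a worm $C' \in \worms_{\gamma+1}$ equivalent to $A_k \wedge B$, and a further use of Lemma~\ref{theorem:basicLemma}(\ref{lemma:basicLemma3}) gives $C := C'\gamma A'$.

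If instead $\gamma$ appears in both $A$ and $B$, write $A = A_k\gamma A'$ and $B = B_m\gamma B'$ in WNF, where $A_k, B_m \in \NF_{\gamma+1}$ and $A', B' \in \NF_\gamma$. The IH on $(A_k, B_m)$ produces a worm $C' \in \worms_{\gamma+1}$ equivalent to $A_k \wedge B_m$. The main new step is to collapse $\langle\gamma\rangle A' \wedge \langle\gamma\rangle B'$ into a single $\langle\gamma\rangle$-formula. To do so, I would invoke Lemma~\ref{theorem:LinearOrder} to assume WLOG $A' \leq_\gamma B'$, so either $A' = B'$ or $B' \to \langle\gamma\rangle A'$; combined with the instance $\langle\gamma\rangle\langle\gamma\rangle\phi \to \langle\gamma\rangle\phi$ of Axiom~(iii) (taking $\alpha = \beta = \gamma$), one obtains $\langle\gamma\rangle B' \to \langle\gamma\rangle A'$, hence $\langle\gamma\rangle A' \wedge \langle\gamma\rangle B' \equiv \langle\gamma\rangle B'$. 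Then $A \wedge B \equiv C' \wedge \langle\gamma\rangle B' \equiv C'\gamma B' =: C$.

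The conditions ${\mods}(C) \subseteq {\mods}(AB)$ and $l(C) \leq l(AB)$ follow from routine arithmetic using the IH bounds and the identity $l(A) = l(A_k) + 1 + l(A')$ (and similarly for $B$); for example, in the second case $l(C) \leq l(A_k) + l(B_m) + 1 + l(B') = l(A_k) + l(B) \leq l(AB)$. The main obstacle is precisely the collapse step in this second case, where the linearity of $\leq_\gamma$ on $\NF_\gamma$ guaranteed by Lemma~\ref{theorem:LinearOrder} is essential: without it we would have no way to merge two $\langle\gamma\rangle$-conjuncts into one.
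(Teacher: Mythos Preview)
Your proof is correct and takes essentially the same approach as the paper's: split off the outermost blocks, apply the induction hypothesis to their conjunction, and collapse the two remaining $\langle\gamma\rangle$-tails using linearity of $\leq_\gamma$. The only cosmetic differences are that the paper inducts on the width $w(AB)$ rather than the length, unifies your two cases into one by allowing one of the tails $\alpha A_2$, $\alpha B_2$ to be empty, and cites Corollary~\ref{theorem:ComparingWorms} directly (which is exactly the consequence of Lemma~\ref{theorem:LinearOrder} that you re-derive).
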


\begin{proof}
By induction on $w(AB)$. The base case is trivial. For the inductive
case, we assume w.l.o.g.\ that $\alpha \in AB$ and write $A =
A_1\alpha A_2$ and $B = B_1 \alpha B_2$ with at most one of $\alpha
A_2, \alpha B_2$ empty and $A_1, B_1 \in \NF_{\alpha+1}$. We reason
in \gllam. By the IH, we find some $C_1 \leftrightarrow A_1 \wedge
B_1$. By Corollary \ref{theorem:ComparingWorms} we may assume that
$\alpha A_2 \to \alpha B_2$. Thus, we conclude the proof by
\[
\begin{array}{lll}
A \wedge B & \leftrightarrow & A_1\alpha A_2 \wedge B_1\alpha B_2\\
& \leftrightarrow & A_1\wedge \alpha A_2 \wedge B_1 \wedge \alpha B_2\\
& \leftrightarrow & A_1\wedge B_1 \wedge \alpha A_2 \wedge  \alpha B_2\\
& \leftrightarrow & C_1 \wedge \alpha A_2 \wedge  \alpha B_2\\
& \leftrightarrow & C_1 \wedge \alpha A_2\\
& \leftrightarrow & C_1 \alpha A_2\\
\end{array}
\]
\end{proof}

\begin{corollary}\label{theorem:effectiveWormConjunction}
There is a $\Lambda$-effective procedure which, given two worms $A$
and $B$ in WNF, computes a worm $C$ so that $\gllam \vdash
A\wedge B \leftrightarrow C$ with ${\mods}(C) \subseteq
{\mods}(AB)$, and $l(C)\leq l(AB)$.
\end{corollary}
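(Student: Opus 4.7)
The plan is to extract an algorithm directly from the recursive proof of Corollary~\ref{theorem:WormsClosedUnderConjunction}, replacing the two non-syntactic steps --- the inductive hypothesis and the appeal to Corollary~\ref{theorem:ComparingWorms} --- by a recursive call on smaller arguments and by an invocation of the $\Lambda$-effective comparison procedure of Corollary~\ref{theorem:effectiveComparisonOfWorms}, respectively. Given $A, B$ in WNF, I first scan their finite symbol sequences to list $\mods(AB)$ and query the $\Lambda$-oracle pairwise to extract $\alpha := \min \mods(AB)$; if this set is empty, both worms are $\epsilon$ and I output $\epsilon$.

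Otherwise, I split $A$ at its leftmost occurrence of $\alpha$ as $A = A_1 \alpha A_2$ with $A_1 \in \NF_{\alpha+1}$ and $A_2 \in \NF_\alpha$, allowing the ``$\alpha A_2$'' segment to be empty when $\alpha \notin A$; the same for $B$. Since $A_1, B_1 \in \NF_{\alpha+1}$ both avoid $\alpha$, we have $w(A_1 B_1) < w(AB)$, so a recursive call on $(A_1, B_1)$ terminates and returns a worm $C_1$ with $\gllam \vdash C_1 \leftrightarrow A_1 \wedge B_1$. Applying the comparison procedure of Corollary~\ref{theorem:effectiveComparisonOfWorms} to $A_2$ and $B_2$ in $\NF_\alpha$ decides which of $\gllam \vdash \alpha A_2 \to \alpha B_2$ or its converse holds; in the first case I output $C := C_1 \alpha A_2$, otherwise $C := C_1 \alpha B_2$. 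The provable equivalence $\gllam \vdash A \wedge B \leftrightarrow C$ is then exactly the chain of equivalences displayed in the proof of Corollary~\ref{theorem:WormsClosedUnderConjunction}.

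The main obstacle is just the syntactic bookkeeping showing that the two stated bounds are preserved along the recursion. For the modal bound, $\mods(C) = \mods(C_1) \cup \{\alpha\} \cup \mods(A_2)$, and the inductive guarantee gives $\mods(C_1) \subseteq \mods(A_1) \cup \mods(B_1)$, so $\mods(C) \subseteq \mods(AB)$. For the length bound, $l(C_1) \leq l(A_1) + l(B_1)$ by induction, hence
\[
l(C) = l(C_1) + 1 + l(A_2) \leq l(A_1) + 1 + l(A_2) + l(B_1) \leq l(A) + l(B) = l(AB).
\]
It remains to dispose of the edge cases: if one of $A, B$ equals $\epsilon$ the procedure returns the other; if exactly one side contains $\alpha$ then the ``missing'' side is treated as having an empty $\alpha A_2$ part, which is dominated by the other side in the comparison step; and if $C_1 = \epsilon$ the output simply begins with $\alpha$. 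Each recursive call strictly decreases $w(AB)$, and at every level only finitely many $\Lambda$-oracle calls are made (for computing $\alpha$ and for Corollary~\ref{theorem:effectiveComparisonOfWorms}), so the overall procedure is $\Lambda$-effective.
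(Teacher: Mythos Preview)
Your proposal is correct and follows exactly the paper's approach: you extract the algorithm implicit in the constructive proof of Corollary~\ref{theorem:WormsClosedUnderConjunction} and invoke Corollary~\ref{theorem:effectiveComparisonOfWorms} to make the comparison step $\Lambda$-effective. The paper's own proof is terser, merely pointing to these two ingredients, whereas you additionally spell out the bookkeeping for the length and modal-set bounds and the edge cases.
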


\begin{proof}
The proof of Lemma \ref{theorem:WormsClosedUnderConjunction}
contains a decision procedure. For $w(AB)\leq 1$ computing the
conjunction just amounts to taking the longer of $A$ or $B$.

For $w(AB)>1$ we compute $C$ as dictated by the proof of Lemma
\ref{theorem:WormsClosedUnderConjunction} where we use Corollary
\ref{theorem:effectiveComparisonOfWorms} to decide which of $\alpha
A_2 \to \alpha B_2$ or $\alpha B_2 \to \alpha A_2$ is the case.
\end{proof}

In Lemma \ref{theorem:LinearOrder} we have proved that $<_{\alpha}$
defines a linear order on the set of normal forms of $\worms_{\alpha}$.
We shall next see through a series of lemmata that each worm $A$ is
equivalent in $\gllam$ to one in WNF. Thus, we can drop the condition of
worms being in WNF in various lemmata above
(\ref{theorem:LinearOrder}, \ref{theorem:ComparingWorms}, and
\ref{theorem:WormsClosedUnderConjunction}).

\begin{lemma}\label{theorem:RemoveMultipleMinimals}
For non-empty $A \in \worms_{\alpha+1}$ we have for any $B\in \worms$ that
\[
\gllam \vdash A \alpha B \leftrightarrow A \alpha^nB \ \ \ \mbox{for
$n\in \omega\setminus \{0\}$}.
\]
\end{lemma}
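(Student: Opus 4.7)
The plan is to prove both implications separately using Lemma~\ref{theorem:basicLemma}\ref{lemma:basicLemma3} as a bridge between concatenation notation and explicit modal conjunctions. Specifically, applying Lemma~\ref{theorem:basicLemma}\ref{lemma:basicLemma3} with the second worm taken to be $\alpha^{n-1}B$ gives
\[
\gllam \vdash A\alpha^n B \leftrightarrow A \wedge \langle\alpha\rangle^n B,
\]
valid since $A\in\worms_{\alpha+1}$. With this bridge both implications reduce to establishing that $\langle\alpha\rangle^n B$ and $\langle\alpha\rangle B$ are inter-derivable \emph{in the presence of the hypothesis $A$}.

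For the easy direction $A\alpha^n B \to A\alpha B$ it suffices to show $\langle\alpha\rangle^n B \to \langle\alpha\rangle B$, since then conjoining with $A$ and using the bridge twice gives the implication. This follows from $n-1$ applications of the transitivity of $\langle\alpha\rangle$, which is an immediate consequence of the Löb axiom (ii). No use of the non-emptiness of $A$ is required here.

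The nontrivial direction $A\alpha B \to A\alpha^n B$ I would prove by induction on $n$, with the case $n=1$ being trivial. For the inductive step from $n$ to $n+1$, I exploit that $A$ is non-empty: write $A = \langle\beta\rangle C$ with $\beta\geq\alpha+1>\alpha$. By Lemma~\ref{theorem:basicLemma}\ref{lemma:basicLemma1} (with the role of the larger modal played by $\beta$),
\[
\gllam \vdash \langle\beta\rangle C \wedge \langle\alpha\rangle^n B \leftrightarrow \langle\beta\rangle\bigl(C \wedge \langle\alpha\rangle^n B\bigr),
\]
and axiom $(\romannumeral5)$ then gives $\langle\beta\rangle(C \wedge \langle\alpha\rangle^n B) \to \langle\alpha\rangle(C \wedge \langle\alpha\rangle^n B) \to \langle\alpha\rangle^{n+1}B$. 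Combined with the induction hypothesis (which implies $A\alpha B \to A \wedge \langle\alpha\rangle^n B$ via the bridge) and $A\alpha B \to A$ (Lemma~\ref{theorem:basicLemma}\ref{lemma:basicLemma0}), we conclude $A\alpha B \to A \wedge \langle\alpha\rangle^{n+1}B$, which by the bridge is $A\alpha^{n+1}B$. The main subtlety is simply keeping track of where the hypothesis $A\in\worms_{\alpha+1}$ and the non-emptiness of $A$ enter; the former powers the bridge, the latter allows one to peel off an outer $\langle\beta\rangle$ in order to apply Lemma~\ref{theorem:basicLemma}\ref{lemma:basicLemma1} and axiom $(\romannumeral5)$ to obtain the extra diamond.
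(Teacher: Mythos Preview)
Your proof is correct and follows the same route the paper has in mind: the paper's proof is the single line ``By an easy induction on $n$,'' and what you have written is exactly the natural way to unpack that induction using Lemma~\ref{theorem:basicLemma}. One tiny remark: for the easy direction you cite L\"ob's axiom~(ii) for transitivity of $\langle\alpha\rangle$, but in this axiomatization the instance $\alpha=\beta$ of axiom~(iii) already gives $[\alpha]\chi\to[\alpha][\alpha]\chi$ directly, so the appeal to L\"ob is unnecessary (though not incorrect).
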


\begin{proof}
By an easy induction on $n$.
\end{proof}

\begin{lemma}\label{theorem:gettingRidOfSmallTerms}
Let $A : = A_{1}\alpha A_0 \therest$ with ($\therest = \epsilon$ or
$\therest = \alpha A'$) and each of $A_{1}, A_0$ in $\worms_{\alpha+1}$.
\[
\mbox{If } \ \ \gllam \vdash A_{1} \to \langle \alpha +1 \rangle A_0,
\mbox{ then } \gllam \vdash A \leftrightarrow A_{1} \therest.
\]
\end{lemma}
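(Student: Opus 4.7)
The plan is to prove the biconditional by splitting it into its two directions, in both cases using Lemma~\ref{theorem:basicLemma}(\ref{lemma:basicLemma3}) --- the equivalence $A \wedge \langle\alpha\rangle X \leftrightarrow A\alpha X$ for $A\in \worms_{\alpha+1}$ --- as the main tool for switching between worm-form and conjunctive form. Both cases of $\therest$ (namely $\therest=\epsilon$ and $\therest=\alpha A'$) will be handled in parallel, with $\therest=\epsilon$ appearing as the easy specialisation.

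For $A \to A_1\therest$, I first apply Lemma~\ref{theorem:basicLemma}(\ref{lemma:basicLemma0}) to get $A \to A_1$, which settles the case $\therest=\epsilon$. For $\therest = \alpha A'$ I additionally extract $\langle\alpha\rangle A'$ from $A$ by computing $\langle\alpha\rangle(A_0\alpha A') \to \langle\alpha\rangle\langle\alpha\rangle A' \to \langle\alpha\rangle A'$, where the last step is Axiom~(iii) at $\beta = \alpha$ followed by $K$-reasoning. Combining $A_1$ with $\langle\alpha\rangle A'$ and reassembling via Lemma~\ref{theorem:basicLemma}(\ref{lemma:basicLemma3}) then yields $A_1\alpha A' = A_1\therest$.

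For the converse $A_1\therest \to A$, the hypothesis $A_1 \to \langle\alpha+1\rangle A_0$ becomes essential. When $\therest=\epsilon$, Axiom~(v) weakens the hypothesis to $A_1 \to \langle\alpha\rangle A_0$, and Lemma~\ref{theorem:basicLemma}(\ref{lemma:basicLemma3}) rewrites $A_1 \wedge \langle\alpha\rangle A_0$ as $A_1\alpha A_0 = A$. When $\therest=\alpha A'$, Lemma~\ref{theorem:basicLemma}(\ref{lemma:basicLemma3}) first decomposes $A_1\alpha A'$ as $A_1 \wedge \langle\alpha\rangle A'$; then Axiom~(iv), applicable precisely because $\alpha < \alpha+1$, gives $\langle\alpha\rangle A' \to [\alpha+1]\langle\alpha\rangle A'$. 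Paired with $A_1 \to \langle\alpha+1\rangle A_0$ and routine normal-modal reasoning for $[\alpha+1]$, this produces $\langle\alpha+1\rangle(A_0 \wedge \langle\alpha\rangle A')$. Axiom~(v) descends this to $\langle\alpha\rangle(A_0 \wedge \langle\alpha\rangle A')$, and two further applications of Lemma~\ref{theorem:basicLemma}(\ref{lemma:basicLemma3}) --- first inside, using $A_0 \in \worms_{\alpha+1}$, then outside, using $A_1$ --- rewrite this as $A_1\alpha A_0\alpha A' = A$.

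The main obstacle is the bookkeeping around $\alpha$ versus $\alpha+1$. The strength of the hypothesis lies in its use of $\langle\alpha+1\rangle$ rather than merely $\langle\alpha\rangle$: this is what unlocks Axiom~(iv) for passing $\langle\alpha\rangle A'$ through $[\alpha+1]$ in the delicate converse direction. The side conditions $A_0, A_1 \in \worms_{\alpha+1}$ play the parallel role of making Lemma~\ref{theorem:basicLemma}(\ref{lemma:basicLemma3}) available exactly at the critical moments.
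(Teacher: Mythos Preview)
Your proof is correct and follows essentially the same approach as the paper's. Both directions use Lemma~\ref{theorem:basicLemma}(\ref{lemma:basicLemma3}) to pass between worm form and conjunctive form, and the converse direction in both cases pushes $\therest$ under $\langle\alpha+1\rangle A_0$ (you via Axiom~(iv) directly, the paper via Lemma~\ref{theorem:basicLemma}(\ref{lemma:basicLemma1}), which is the same thing packaged) before descending to $\langle\alpha\rangle$ and reassembling; the only cosmetic difference is that the paper handles $\therest=\epsilon$ and $\therest=\alpha A'$ uniformly in a single implication chain, while you make the case split explicit.
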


\begin{proof}
We assume $\gllam \vdash A_{1} \to \langle \alpha +1 \rangle A_0$.
(The first direction actually holds without the assumption.)

From $A_1\alpha A_0 \therest$ we get $A_1 \wedge A_0\therest$. If
$\therest$ is of the form $\alpha A'$, from $\alpha A_0{\therest}$
we get $\therest$ by repeatedly applying Axiom $(\romannumeral3)$
from inside out. When $\therest = \epsilon$, we have $\therest$
straight away of course. Thus,
\[
\begin{array}{llll}
\gllam \vdash & A&\to&A_1\wedge \alpha A_0\therest\\
 &  & \to & A_1 \wedge \therest\\
 &  & \to & A_1\therest.\\
\end{array}
\]
For the other direction we reason in $\gllam$ and use our assumption
that $A_1 \to \langle \alpha+1 \rangle A_0$.
\[
\begin{array}{lll}
A_1\therest &\to & A_1\therest \wedge \langle \alpha+1 \rangle A_0\\
 &\to & A_1 \wedge \therest \wedge \langle \alpha+1 \rangle A_0\\
  &\to & A_1 \wedge \langle \alpha+1 \rangle A_0 \therest\\
    &\to & A_1 \wedge \langle \alpha \rangle A_0 \therest\\
        &\to & A_1  \alpha A_0 \therest.\\
\end{array}
\]
\end{proof}

\begin{lemma}\label{theorem:ExistNormalForm}
Each worm $A \in \worms$ is equivalent in $\gllam$ to some ${\sf NF}(A)$ in
WNF. Moreover, ${\mods}({\sf NF}(A)) \subseteq
{\mods}(A)$.
\end{lemma}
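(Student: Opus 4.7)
The plan is to prove the lemma by induction on the length $l(A)$. The base case $l(A)=0$, so $A=\epsilon$, is immediate: $\epsilon$ is in WNF by definition and the modal-inclusion is trivial. Set ${\sf NF}(\epsilon):=\epsilon$.

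For the inductive step, assume $A\neq\epsilon$ and let $\alpha:=\min(\mods(A))$. Splitting $A$ at each of its $\alpha$-occurrences, I write
\[
A = B_k\,\alpha\,B_{k-1}\,\alpha\cdots\alpha\,B_1,
\]
where each $B_i\in\worms_{\alpha+1}$ (possibly empty). Each $B_i$ has $l(B_i)<l(A)$, so the inductive hypothesis supplies $C_i:={\sf NF}(B_i)\in\worms_{\alpha+1}$ in WNF with $\mods(C_i)\subseteq\mods(B_i)$. By iterated application of the congruence rule ``if $\gllam\vdash\varphi\leftrightarrow\psi$ then $\gllam\vdash\la\alpha\ra\varphi\leftrightarrow\la\alpha\ra\psi$'' (a standard consequence of necessitation and Axiom (i)), this yields
\[
\gllam\vdash A\leftrightarrow C_k\,\alpha\,C_{k-1}\,\alpha\cdots\alpha\,C_1.
\]

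It remains only to enforce the WNF ordering condition $C_{i+1}\leq_{\alpha+1}C_i$. I enforce it by iterated application of Lemma \ref{theorem:gettingRidOfSmallTerms}: while some adjacent pair satisfies $C_{i+1}>_{\alpha+1}C_i$, apply the lemma with $A_1:=C_{i+1}$, $A_0:=C_i$, and $\therest:=\alpha C_{i-1}\alpha\cdots\alpha C_1$ (or $\therest:=\epsilon$ when $i=1$), obtaining $C_{i+1}\alpha C_i\,\therest\leftrightarrow C_{i+1}\,\therest$; wrapping this equivalence inside the preceding modal context $C_k\alpha\cdots\alpha C_{i+2}\alpha(\cdot)$ by congruence yields an equivalent worm with $C_i$ deleted. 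Each removal strictly decreases the number of $C$-terms, so the procedure terminates after at most $k$ steps. The resulting subsequence $C_{i_r}\alpha\cdots\alpha C_{i_1}$ is in WNF -- every remaining $C_{i_j}$ is in WNF and every adjacent pair now respects $\leq_{\alpha+1}$ -- and the modal-inclusion $\mods\subseteq\mods(A)$ is inherited term-wise. Setting ${\sf NF}(A)$ to this worm completes the induction.

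The principal technical point is that Lemma \ref{theorem:gettingRidOfSmallTerms} is stated as a whole-formula equivalence, whereas here it must be applied to a suffix embedded inside a longer worm; the passage is justified by the aforementioned congruence under every $\la\alpha\ra$, but one must explicitly note that the $C_i$ and $B$ involved are all closed formulas so that the earlier machinery applies uniformly. A minor subtlety is that empty pieces $C_i=\epsilon$ never cause a violation: Axiom (v) implies $\epsilon\leq_{\alpha+1}D$ for every $D\in\worms_{\alpha+1}$, so the iteration only ever removes a $C_i$ that is strictly dominated from the left by a non-empty neighbour.
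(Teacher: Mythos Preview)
Your overall strategy is sound and close to the paper's, but there is a real gap in your second step. You claim that from $B_i\leftrightarrow C_i$ one obtains $A\leftrightarrow C_k\alpha\cdots\alpha C_1$ ``by iterated application of the congruence rule''. Congruence lets you replace \emph{subformulas}, and indeed $B_1$ (as the worm $B_1\top$) is a subformula of $A$, so $A\leftrightarrow B_k\alpha\cdots\alpha B_2\alpha C_1$ follows. But for $i\geq 2$ the formula $B_i$ is \emph{not} a subformula of $B_i\alpha C_{i-1}\alpha\cdots\alpha C_1$: the subformulas of the latter are $\top$, $C_1$, $\alpha C_1$, and so on, never $B_i\top$. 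Hence congruence alone does not yield $B_i\alpha X\leftrightarrow C_i\alpha X$. What you need here is Lemma~\ref{theorem:basicLemma}(\ref{lemma:basicLemma4}), or equivalently the splitting $B_i\alpha X\leftrightarrow B_i\wedge\alpha X\leftrightarrow C_i\wedge\alpha X\leftrightarrow C_i\alpha X$ via Lemma~\ref{theorem:basicLemma}(\ref{lemma:basicLemma3}), which is available precisely because $B_i,C_i\in\worms_{\alpha+1}$. Once you invoke this, step~2 goes through; your step~3 is fine as written, since there the suffix $C_{i+1}\alpha C_i\therest$ genuinely \emph{is} a subformula and congruence applies.

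With that fix your argument is a mild reorganisation of the paper's. The paper applies the IH \emph{once} to the entire tail $A_k\alpha\cdots\alpha A_0$ (obtaining a WNF tail in one shot) and separately to the leading block $A_{k+1}$, glues via Lemma~\ref{theorem:basicLemma}(\ref{lemma:basicLemma3}), and if the new boundary violates the order applies Lemma~\ref{theorem:gettingRidOfSmallTerms} once and recurses on the strictly shorter remainder --- tracking the length bound $l({\sf NF}(A))\leq l(A)$ to justify that final recursive call. Your version instead normalises each block individually and then iterates Lemma~\ref{theorem:gettingRidOfSmallTerms} until no violations remain; this trades away the need to track lengths for an extra recursive call, at the cost of needing the prefix-substitution argument above at every block rather than only at the leading one.
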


\begin{proof}
By induction on $l(A)$ we shall prove that each $A \in \worms_{\alpha}$
is equivalent to some $\tila \in \worms_{\alpha}$ with $l(\tila)\leq
l(A)$ and ${\mods}(\tila) \subseteq {\mods}(A)$. For
$l(A)= 0$ we see that $A = \epsilon \in$ WNF. We proceed to prove
the case when $l(A)>0$. All modal reasoning takes place in \gllam.

For $\alpha = \min{(A)}$, we use Lemma
\ref{theorem:RemoveMultipleMinimals} to write $A$ as $A_{k+1}\alpha
A_k\alpha \ldots  A_0$ with $k\geq 0$ and each $A_i \in
\worms_{\alpha+1}$. Recall that $ A_0\alpha \ldots A_0$ just means $
A_0$. By the IH we find some $A_i'\in \worms_{\alpha+1}$ such that
$A'_l\alpha \ldots A'_0$ is in WNF and equivalent to $A_k\alpha
\ldots A_0$. Moreover, we have that $l(A'_l\alpha \ldots A'_0) \leq
l(A_k\alpha \ldots A_0)$. It is easy to see that we also have that
$\alpha A'_l\alpha \ldots A'_0$ is equivalent to $\alpha A_k\alpha
\ldots A_0$.

Again, by the IH, we can find some $\twoprime \in \NF_{\alpha+1}$
which is equivalent to $A_{k+1}$ and with $l(\twoprime) \leq
l(A_{k+1})$.
Clearly we have that
\[
\begin{array}{lll}
\twoprime\alpha A_l' \alpha \ldots A_0' & \leftrightarrow & \twoprime \wedge \alpha A_l' \alpha \ldots A_0'\\
& \leftrightarrow & A_{k+1} \wedge \alpha A_l' \alpha \ldots A_0'\\
& \leftrightarrow & A_{k+1} \wedge \alpha A_k \alpha \ldots A_0\\
& \leftrightarrow & A.\\
\end{array}
\]
If $A_l' \geq_{\alpha+1} \twoprime$ then $\twoprime\alpha A_l'
\alpha \ldots A_0'$ is in WNF. Moreover, $l(\twoprime\alpha A_l'
\alpha \ldots A_0') \leq l(\twoprime) + l(\alpha A_l'\alpha \ldots
A_0')  \leq l(A_{k+1}) + l(\alpha A_k\alpha \ldots A_0)\leq l(A)$
and $\twoprime\alpha A_l' \alpha \ldots A_0' \in \worms_{\alpha}$.

If $A_l' \not \geq_{\alpha+1} \twoprime$ we conclude by Lemma
\ref{theorem:LinearOrder} that $\twoprime >_{\alpha+1}A_l'$. Now we
can apply Lemma \ref{theorem:gettingRidOfSmallTerms} to see that
\[
\begin{array}{lll}
A & \leftrightarrow & \twoprime\alpha A_l' \alpha \ldots A_0'\\
& \leftrightarrow & \twoprime\alpha A_l' \therest\\
& \leftrightarrow & \twoprime\therest.\\
\end{array}
\]
We conclude by yet another call upon the IH to find a WNF in
$\worms_{\alpha}$ equivalent to $\twoprime\therest$ and of length at most
$l(\twoprime\therest)$.

Thus, tranforming a worm into an equivalent one in WNF boils down to
repeatedly shortening the original worm by applying lemmata
\ref{theorem:RemoveMultipleMinimals} and
\ref{theorem:gettingRidOfSmallTerms} whence it is clear that
${\mods}({\sf NF}(A)) \subseteq {\mods}(A)$.
\end{proof}

\begin{corollary}\label{theorem:WNFisEffectivelyComputable}
Given some $\gllam$ worm $A \in \worms_{\alpha}$, there is a
$\Lambda$-computable procedure to obtain a worm $A' \in \NF_\alpha$
with ${\mods}(A') \subseteq {\mods}(A)$ and
$\gllam \vdash A \leftrightarrow A'$.
\end{corollary}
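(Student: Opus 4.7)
The plan is to extract a $\Lambda$-effective algorithm directly from the constructive proof of Lemma \ref{theorem:ExistNormalForm}. That proof proceeds by induction on $l(A)$, and each of its steps is already mechanical once we can (a) locate the minimum modal appearing in a worm, (b) split a worm at its minimum modal, and (c) compare two worms in $\NF_{\alpha+1}$ with respect to $\leq_{\alpha+1}$. Task (a) reduces to finitely many queries to the $\Lambda$-oracle, task (b) is a straightforward pass through the sequence representation of $A$, and task (c) is exactly the content of Corollary \ref{theorem:effectiveComparisonOfWorms}. So the algorithmic ingredients are all in place.

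First I would describe the recursive procedure $\textup{NF}(A)$: if $A = \epsilon$, return $\epsilon$; otherwise, compute $\alpha := \min(A)$ via an oracle sweep through the modals of $A$, parse $A$ as $A_{k+1}\alpha A_k \alpha\cdots\alpha A_0$ with $A_i\in\worms_{\alpha+1}$ (by scanning from right to left and cutting at every occurrence of $\alpha$), and recursively compute $\twoprime := \textup{NF}(A_{k+1})$ and $A_l'\alpha\cdots\alpha A_0' := \textup{NF}(A_k\alpha\cdots\alpha A_0)$. Then invoke Corollary \ref{theorem:effectiveComparisonOfWorms} to decide whether $A_l' \geq_{\alpha+1} \twoprime$. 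If so, output $\twoprime\alpha A_l'\alpha\cdots\alpha A_0'$, which is already in $\NF_\alpha$ by the proof of Lemma \ref{theorem:ExistNormalForm}. If not, form $\twoprime\therest$ (where $\therest$ is the tail $\alpha A_{l-1}'\alpha\cdots\alpha A_0'$ of the shorter NF, or $\epsilon$ when $l=0$) and return $\textup{NF}(\twoprime\therest)$.

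Correctness is inherited from Lemma \ref{theorem:ExistNormalForm}: each construction step is precisely one of the equivalences established there, and the inclusion $\mods(\textup{NF}(A)) \subseteq \mods(A)$ is preserved at every step because neither Lemma \ref{theorem:RemoveMultipleMinimals} nor Lemma \ref{theorem:gettingRidOfSmallTerms} introduces fresh modals. For termination the key invariant, also already proved in Lemma \ref{theorem:ExistNormalForm}, is that the recursive calls occur on strictly shorter worms: the calls on $A_{k+1}$ and on $A_k\alpha\cdots\alpha A_0$ strip at least one symbol from $A$, and the final call on $\twoprime\therest$ is on a worm of length at most $l(\twoprime) + l(\therest) < l(A)$ because the proof of the lemma gives $l(\twoprime)\leq l(A_{k+1})$ and $\therest$ is a proper suffix of the NF of the tail.

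The main technical point to be careful about is the mutual dependency with Corollary \ref{theorem:effectiveComparisonOfWorms}: that corollary compares worms already presented in WNF, and our algorithm applies it precisely to $\twoprime$ and $A_l'$, both of which have been put in NF by the recursive calls before the comparison is made. Apart from this ordering of operations, everything else is bookkeeping, and the procedure is $\Lambda$-effective because its only appeals outside pure list manipulation are to the $\Lambda$-oracle (for locating minima and for the comparisons in Corollary \ref{theorem:effectiveComparisonOfWorms}).
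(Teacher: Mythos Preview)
Your proposal is correct and follows essentially the same approach as the paper: extract the algorithm from the constructive proof of Lemma \ref{theorem:ExistNormalForm} and observe that the one nontrivial decision (whether $A_l' \geq_{\alpha+1} \twoprime$) is handled by Corollary \ref{theorem:effectiveComparisonOfWorms}. One small quibble: in your termination argument, $\therest$ need not be a \emph{proper} suffix of the normal form of the tail (take $A_l'=\epsilon$), but the strict inequality $l(\twoprime\therest)<l(A)$ still holds because the $\alpha$ separating $A_{k+1}$ from the tail has been dropped, giving $l(\twoprime)+l(\therest)\leq l(A_{k+1})+l(A_k\alpha\cdots\alpha A_0)=l(A)-1$.
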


\begin{proof}
We see that the proof of Lemma \ref{theorem:ExistNormalForm}
actually contains a description of this decision procedure. In the
inductive step, whether or not we have to apply Lemma
\ref{theorem:gettingRidOfSmallTerms} can be $\Lambda$-decided in
virtue of Corollary \ref{theorem:effectiveComparisonOfWorms}.
\end{proof}

Now that we have seen that we can $\Lambda$-effectively compute a
WNF, we conclude from Corollary
\ref{theorem:effectiveWormConjunction} that we can $\Lambda$-compute
the conjunction of any two worms $A$ and $B$. In other words, we can
omit the restriction that $A$ and $B$ be in WNF in Corollary
\ref{theorem:effectiveWormConjunction}.


\section{A normal form theorem for closed formulas}\label{section:NormalFormClosedFormulas}

So far in this paper, no irreflexivity of the relations $<_{\alpha}$
has been used in our reasoning. In this section we shall prove that
each closed formula is actually equivalent in $\gllam$ to a Boolean
combination of worms and some important corollaries thereof. In the
proofs, irreflexivity plays an essential role.

\subsection{Irreflexivity}

By {\em irreflexivity} we mean the claim that for no $A\in \worms$ and
for no $\alpha \in |\Lambda|$ do we have $\gllam \nvdash A \to \langle
\alpha \rangle A$. In view of the following result, this is equivalent to demanding that worms be consistent.

\begin{lemma}\label{theorem:LobOnNegatedWorms}
If $\gllam \vdash A \to \langle \alpha \rangle A$, then $\gllam \vdash
\neg A$.
\end{lemma}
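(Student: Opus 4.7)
The plan is to derive this as an instance of Löb's theorem for the modality $[\alpha]$, which is available since axiom (ii) of $\gllam$ is precisely the Löb schema $[\alpha]([\alpha]\chi \to \chi) \to [\alpha]\chi$. The observation is that $A \to \langle\alpha\rangle A$ is propositionally equivalent to $[\alpha]\neg A \to \neg A$, so an assumption of the shape in the hypothesis is exactly the antecedent of an application of Löb with $\chi := \neg A$ (after necessitation).

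In more detail, I would carry out the following steps. First, assume $\gllam \vdash A \to \langle\alpha\rangle A$; by propositional logic this gives $\gllam \vdash [\alpha]\neg A \to \neg A$. Second, apply the necessitation rule for $[\alpha]$ to obtain $\gllam \vdash [\alpha]\bigl([\alpha]\neg A \to \neg A\bigr)$. Third, apply Löb's axiom (axiom (ii)) with $\chi := \neg A$ and Modus Ponens to conclude $\gllam \vdash [\alpha]\neg A$, that is, $\gllam \vdash \neg \langle\alpha\rangle A$. Finally, combining this with the assumption $\gllam \vdash A \to \langle\alpha\rangle A$ yields $\gllam \vdash \neg A$ by propositional reasoning.

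There is no real obstacle here: each step is a direct application of one of the basic ingredients of $\gllam$ (propositional logic, necessitation for $[\alpha]$, and Löb's axiom), and the derivation does not require any of the machinery developed for worms or normal forms in the preceding sections. The only thing to flag is that the reasoning is uniform in $\alpha$ and does not depend on $\Lambda$ being well-ordered, so it applies in the full generality of the paper's setting.
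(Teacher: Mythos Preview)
Your proof is correct and is essentially identical to the paper's own argument: contrapose the hypothesis to obtain $[\alpha]\neg A \to \neg A$, necessitate, apply L\"ob's axiom to get $[\alpha]\neg A$, and then use the contraposed hypothesis once more (modus ponens) to conclude $\neg A$.
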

\begin{proof}
If we assume $\gllam \vdash A \to \langle \alpha \rangle A$, then we
would get by contraposition and necessitation that $\gllam \vdash
[\alpha ] ([\alpha] \neg A \to \neg A)$. One application of L\"ob's
axiom would yield $\vdash [\alpha] \neg A$. Using the contraposition
of our assumption again, we obtain $\gllam \vdash \neg A$.
\end{proof}

Fortunately, irreflexivity does hold. This is known for well-ordered $\Lambda$, in which case there are many arguments
in the literature as to why that is, each with its advantages and
disadvantages.

\paragraph{Arithmetic interpretations.} In case of $\glp_{\omega}$ all formulas $\psi$ come with a clearly
defined arithmetical interpretation $\psi^{\star}$ where each $[n]$
is interpreted as a natural formalization of ``provable in \ea
together all true $\Pi_n$-sentences'' \cite{Ignatiev:1993:StrongProvabilityPredicates}. The
soundness for this interpretation tells us that for any formula
$\varphi$ and any interpretation $\star$ mapping propositional
variables to sentences in the language of arithmetic we have that
$\glp_\omega \vdash \varphi \Rightarrow \pa \vdash \varphi^{\star}$. In
particular we get for worms $A$ that $\glp_\omega \vdash \neg A \Rightarrow
\pa \vdash \neg A^{\star}$. Now $\neg A^{\star}$ is just an
iteration of inconsistency assertions all of which are not provable
by \pa as everything provable by \pa is actually true. This
reasoning, although using quite some heavy machinery as reflection
over \pa, establishes the irreflexivity of $<_{n}$ in
$\glp_{\omega}$. Recent work by the authors and Dashkov suggests that this may be generalized to larger recursive ordinals than $\omega$, however arithmetic interpretations for non-recursive ordinals or for linear orders that are not well-founded are not currently known.

\paragraph{Kripke semantics.} Kripke semantics for $\glp_{\omega}$ have been studied extensively \cite{Ignatiev:1993:StrongProvabilityPredicates, Joosten:2004:InterpretabilityFormalized, BeklemishevJoostenVervoort:2005:FinitaryTreatmentGLP, Icard:2008:MastersThesis}. Using these
semantics it is easy to see that for each $n\in \omega$, and each
worm $A \in \glp_{\omega}$ we can find a model $\mathcal{M}$ and a
world $x$ of $\mathcal{M}$ where both $A$ and $[n] \neg A$ hold, thus establishing the irreflexivity of $<_n$ in $\glp_{\omega}$.
More recently this has been extended to $\gllam$ for an arbitrary ordinal
$\Lambda$ \cite{FernandezJoosten:2012:KripkeSemanticsGLP0}. One
drawback is that the methods used are not strictly finitary, whereas
\cite{BeklemishevJoostenVervoort:2005:FinitaryTreatmentGLP} gives a
full finitary treatment of $\glp^0_\omega$. Thus the irreflexivity
of $\glp_\omega$ can be proven on strictly finitary grounds. As before, the assumption that $\Lambda$ is well-ordered plays an important role and it is not obvious how one could generalize these methods, however they do have the advantage of working for arbitrary ordinals, including uncountable ones.

\paragraph{Topological semantics.} The same reasoning can also be performed using topological semantics of $\glp_\omega$
\cite{Icard:2009:TopologyGLP, Icard:2008:MastersThesis, BeklemishevGabelaia:2011:TopologicalCompletenessGLP}, which likewise have been generalized to arbitrary ordinals in \cite{FernandezJoosten:2012:ModelsOfGLP}. As before, however, the methods used in the transfinite setting are not strictly finitary and have been developed only for well-ordered $\Lambda$.\\\\

Now that we have provided a reduction from $\gllam$ to
$\glp_\omega$ in Theorem \ref{conserv}, we in particular have a
reduction from $\gllam^0$ to $\glp_\omega^0$. This gives us a
new proof of irreflexivity for the general logic. The present argument is both the first finitary proof
of irreflexivity for infinite orders different from $\omega$, provided that $\Lambda$ (and hence $\gllam$)
can be represented in a finitary framework such as Primitive
Recursive Arithmetic, as well as the first proof of irreflexivity which does not require that $\Lambda$ be well-founded.

\begin{theorem}\label{theorem:Irreflexive}
For each linear order $\Lambda$ and each $\alpha\in|\Lambda|$, the relation $<_{\alpha}$ is irreflexive on $\worms$.
\end{theorem}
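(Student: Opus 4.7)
The plan is to reduce the claim to the already-established irreflexivity for $\glp_\omega$, exploiting the conservativity bridge (Theorem~\ref{conserv}) proved in Section~\ref{section:ReductionForFullGLP}. Concretely, I would argue by contradiction. Suppose that, for some worm $A\in\worms$ and some $\alpha\in|\Lambda|$, we have $A<_\alpha A$; by definition this means $\gllam\vdash A\to\langle\alpha\rangle A$.

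Let $F:=\mods(A)\cup\{\alpha\}$, a finite subset of $|\Lambda|$. The formula $A\to\langle\alpha\rangle A$ lies in $\cL_F$, so Theorem~\ref{conserv} yields $\glp_F\vdash A\to\langle\alpha\rangle A$. Let $n=|F|$ and let $\xi\colon\{0,\dots,n-1\}\to F$ be the order-preserving enumeration; write $\hat\phi$ for $\xi^{-1}(\phi)$. Since $\xi^{-1}$ simply relabels modalities, $\hat A$ is still a worm in $\cL_n$ and $\hat\alpha\in\{0,\dots,n-1\}$. By Lemma~\ref{isom}, $\glp_n\vdash\hat A\to\langle\hat\alpha\rangle\hat A$, and hence $\glp_\omega\vdash\hat A\to\langle\hat\alpha\rangle\hat A$ since $\glp_n$ is a sublogic of $\glp_\omega$.

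Now apply Lemma~\ref{theorem:LobOnNegatedWorms} (whose proof uses only L\"ob's axiom, not irreflexivity) inside $\glp_\omega$: we obtain $\glp_\omega\vdash\neg\hat A$. But $\hat A$ is a worm in $\glp_\omega$, and the consistency of every individual worm of $\glp_\omega$ with $\glp_\omega$ is precisely the irreflexivity of each $<_n$ on $\worms$ in $\glp_\omega$, which is established finitarily in \cite{BeklemishevJoostenVervoort:2005:FinitaryTreatmentGLP}. This contradiction proves the theorem.

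The only real step is the translation move: the conservativity theorem reduces arbitrary $\Lambda$ (including non-well-founded orders) to the finite-modality case, and the order-isomorphism $\xi$ then transports everything into $\glp_\omega$, where the result is already known by finitary means. I do not expect a significant obstacle here; one small check worth making explicit is that the translations $\xi$ and $\xi^{-1}$ send worms to worms (immediate from their definition) and that $F$ may be taken finite because $A$ is a finite syntactic object together with the single extra modality $\alpha$. This also yields, as a bonus, that the whole argument is formalizable in any metatheory in which $\Lambda$ and the syntax of $\gllam$ can be represented and in which the finitary proof for $\glp_\omega$ goes through, e.g.\ Primitive Recursive Arithmetic.
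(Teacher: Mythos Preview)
Your proof is correct and follows essentially the same approach as the paper: reduce to $\glp_\omega$ via the conservativity Theorem~\ref{conserv} (together with Lemma~\ref{isom}) and invoke the known finitary irreflexivity there. The only difference is cosmetic: the paper observes directly that $\glp_\omega\vdash\hat A\to\langle n\rangle\hat A$ contradicts irreflexivity of $<_n$, whereas you take a small detour through Lemma~\ref{theorem:LobOnNegatedWorms} to obtain $\glp_\omega\vdash\neg\hat A$ and then appeal to worm consistency---but as the paper notes just before that lemma, these two formulations are equivalent, so the detour is harmless (and unnecessary).
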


\proof The relation $<_n$ is known to be irreflexive over
$\glp_\omega$, and this fact may be proven by finitary means
\cite{Joosten:2004:InterpretabilityFormalized, BeklemishevJoostenVervoort:2005:FinitaryTreatmentGLP}.
Moreover, if for some worm $A$ we had that $\gllam\vdash
(A\to\langle\alpha\rangle A)=\psi$, then we would have that
$\glp_\omega\vdash \hat\psi$, contradicting the irreflexivity of
$<_n$ for some $n$.
\endproof

Thus, we have shown that $<_{\alpha}$ is transitive and irreflexive
and defines a linear order on the worm normal forms in $\worms_{\alpha}$.
In fact, in \cite{Beklemishev:2005:VeblenInGLP} it has been shown to
be a well-order on $\worms_{\alpha}$, if it is irreflexive and $\Lambda$ is well-founded. In particular, if we allow $\Lambda$ to be the clas of all ordinals,
there is a one-one correspondence between normal forms in $\worms$ and
ordinals in \ord. In \cite{FernandezJoosten:2012:WellOrders2} the
relation $<_{\alpha}$ is also studied and seen to be a non-tree-like
partial well-order on $\worms$.

Without using irreflexivity we proved two major results on worms and
WNFs. First, that WNFs are linearly ordered by $<_0$, and second,
that each worm is equivalent to one in WNF. Using irreflexivity we
readily see that the WNFs actually form a strict linear order under
$<_0$ and that each formula is equivalent to a \emph{unique} WNF.

\begin{lemma}\label{theorem:WNFsAreUnique}
Each worm $A$ is equivalent in $\gllam$ to a unique worm ${\sf NF}(A)$
in WNF.
\end{lemma}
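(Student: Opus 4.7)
The plan is to establish uniqueness, since existence is already provided by Lemma \ref{theorem:ExistNormalForm}. Suppose $B$ and $C$ are both worms in WNF with $\gllam \vdash A \leftrightarrow B$ and $\gllam \vdash A \leftrightarrow C$, so in particular $\gllam \vdash B \leftrightarrow C$. I want to show $B = C$ as sequences of modals.

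The first step is to find a common $\alpha$ such that $B, C \in \worms_\alpha$, so that Lemma \ref{theorem:LinearOrder} (trichotomy on $\NF_\alpha$) applies. If $B = C = \epsilon$ there is nothing to prove, so assume at least one is non-empty; then $\mods(B) \cup \mods(C)$ is a non-empty finite subset of $|\Lambda|$, and I take $\alpha$ to be its minimum. By construction both $B$ and $C$ belong to $\NF_\alpha$, since being in $\worms_\alpha$ only requires that every modal appearing is $\geq \alpha$, and the empty worm is in $\worms_\alpha$ trivially.

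Now Lemma \ref{theorem:LinearOrder} gives three cases: $B = C$, $B <_\alpha C$, or $C <_\alpha B$. The first case is what we want, so suppose towards a contradiction that one of the other two holds; by symmetry, say $B <_\alpha C$. Unfolding the definition, $\gllam \vdash C \to \langle\alpha\rangle B$. Combining this with the hypothesis $\gllam \vdash B \leftrightarrow C$ (replacing $C$ by $B$ in the antecedent and $B$ by $C$ in the consequent), we obtain
\[
\gllam \vdash B \to \langle\alpha\rangle B,
\]
which contradicts Theorem \ref{theorem:Irreflexive}. The case $C <_\alpha B$ is entirely symmetric. Hence $B = C$, establishing uniqueness.

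The only mild subtlety is the handling of the empty worm and the choice of $\alpha$, but both are cleanly resolved by taking $\alpha = \min(\mods(B) \cup \mods(C))$; the core of the argument is simply trichotomy plus irreflexivity, both of which have already been secured in the excerpt. There is no real obstacle here, as all the heavy lifting (finitary reduction of $\gllam$ to $\glp_\omega$, and the existence of WNFs) has been done.
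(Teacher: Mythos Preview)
Your proof is correct and follows essentially the same approach as the paper: invoke trichotomy (Lemma~\ref{theorem:LinearOrder}) on the two candidate normal forms and derive a violation of irreflexivity (Theorem~\ref{theorem:Irreflexive}) from a strict inequality. The paper chains through $A$ to obtain $A \to \langle\alpha\rangle A$ with $\alpha = \min(A)$, whereas you substitute directly to get $B \to \langle\alpha\rangle B$ with $\alpha = \min(\mods(B)\cup\mods(C))$; your choice of $\alpha$ is in fact slightly more careful, since it does not presuppose that the modals of an arbitrary WNF equivalent to $A$ are contained in $\mods(A)$.
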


\begin{proof}
Suppose for a contradiction that $A$ had over \gllam two different
WNFs ${B}$ and $\tila$. Then, by Lemma \ref{theorem:LinearOrder} and
reasoning in \gllam we may assume that ${B} \to \langle \alpha \rangle
\tila$ where $\alpha = \min(A)$. Thus,
\[
\begin{array}{lll}
A & \to & {B} \\
& \to& \langle \alpha \rangle \tila\\
& \to& \langle \alpha \rangle A,\\
\end{array}
\]
which contradicts irreflexivity.
\end{proof}

Using irreflexivity it also immediate that our new definition of
normal forms is equivalent to the one previously used in the
literature. In the remainder of this paper we shall freely use
irreflexivity.

\subsection{Closed formulas and worms}

In this section we shall show that each closed formula is equivalent
to a Boolean combination of worms. We follow Section 3 of
\cite{Beklemishev:2005:VeblenInGLP} very closely, formulating
slightly stronger versions of the lemmata in
\cite{Beklemishev:2005:VeblenInGLP} leading up to important further
observations.

The first lemma of this section in a sense tells us that whatever
piece of genuine information we add to a worm, this will always
increase the consistency strength of it (equivalently, increase the
corresponding order-type).

\begin{lemma}\label{theorem:AddingWormsIncreasesOrder}
Let $A, A_1,\hdots A_I \in \worms_{\alpha}$ be such that for each $i\leq
I$, $\gllam \nvdash A \to A_i$. Then it follows that $\gllam \vdash A
\wedge \bigvee_{i=1}^I A_i \to \langle \alpha \rangle A$.
\end{lemma}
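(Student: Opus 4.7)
The plan is to show the single-disjunct version, namely that $\gllam \vdash A \wedge A_i \to \langle \alpha \rangle A$ for every $i \le I$, and then take the disjunction. So fix some $i$ and set $B := A_i$; I must derive $\gllam \vdash A \wedge B \to \langle \alpha \rangle A$ from $\gllam \nvdash A \to B$. Using Lemma \ref{theorem:ExistNormalForm} together with the uniqueness of WNF (Lemma \ref{theorem:WNFsAreUnique}), I first reduce to the case where $A$ and $B$ are in $\NF_\alpha$.

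Next, by Corollary \ref{theorem:WormsClosedUnderConjunction}, there exists a worm $C \in \worms_\alpha$ with $\gllam \vdash A \wedge B \leftrightarrow C$, and I may further assume $C \in \NF_\alpha$ by normalizing. The key observation at this point is that $C$ and $A$ must differ as syntactic normal forms: were $C = A$ then $\gllam \vdash A \leftrightarrow A \wedge B$, giving $\gllam \vdash A \to B$ and contradicting the hypothesis. Here I am using that uniqueness of WNF lets me identify $\gllam$-equivalence of WNFs with syntactic equality.

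Since $A, C$ are distinct elements of $\NF_\alpha$, Lemma \ref{theorem:LinearOrder} leaves only two cases: $C <_\alpha A$ or $A <_\alpha C$. The heart of the argument is to rule out the first case using irreflexivity. Suppose $C <_\alpha A$, meaning $\gllam \vdash A \to \langle \alpha \rangle C$. From $\gllam \vdash C \to A \wedge B \to A$ and the standard diamond-monotonicity derivable from Axiom $(\romannumeral1)$ and necessitation, I obtain $\gllam \vdash \langle \alpha \rangle C \to \langle \alpha \rangle A$. Composing, $\gllam \vdash A \to \langle \alpha \rangle A$, contradicting the irreflexivity of $<_\alpha$ established in Theorem \ref{theorem:Irreflexive}.

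Therefore $A <_\alpha C$, which by definition of $<_\alpha$ gives $\gllam \vdash C \to \langle \alpha \rangle A$. Combining with $\gllam \vdash A \wedge B \leftrightarrow C$ yields $\gllam \vdash A \wedge A_i \to \langle \alpha \rangle A$, and taking the disjunction over $i$ finishes the proof. The only substantive step is the use of irreflexivity to eliminate the $C <_\alpha A$ alternative; the rest is bookkeeping with normal forms and the linearity of $<_\alpha$ on $\NF_\alpha$.
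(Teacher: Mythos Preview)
Your proof is correct and follows essentially the same route as the paper's: form the conjunction $A\wedge A_i$ as a worm in $\NF_\alpha$, invoke linearity of $<_\alpha$ on $\NF_\alpha$, rule out equality by the hypothesis $\nvdash A\to A_i$, rule out the wrong strict inequality by irreflexivity, and then take the disjunction over $i$. The only difference is that you spell out the preliminary normalization of $A$ and $B$ and the appeal to uniqueness of WNF more explicitly than the paper does, which is harmless (and arguably cleaner, since Lemma~\ref{theorem:LinearOrder} is stated only for elements of $\NF_\alpha$).
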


\begin{proof}
All modal reasoning will be in $\gllam$. By Corollary
\ref{theorem:WormsClosedUnderConjunction} for each $i$, let ${\sf
Conj}(A,A_i)$ be the worm in $\NF_{\alpha}$ that is equivalent to $A
\wedge A_i$. By Lemma \ref{theorem:LinearOrder} we can
$<_{\alpha}$-compare ${\sf Conj}(A,A_i)$ to $A$. However, ${\sf
Conj}(A,A_i) = A$ contradicts $\nvdash A \to A_i$. Likewise, ${\sf
Conj}(A,A_i) <_{\alpha} A$ contradicts the irreflexivity of
$<_{\alpha}$. We conclude that ${\sf Conj}(A,A_i) \to \langle \alpha
\rangle A$ whence $A \wedge A_i \to \langle \alpha \rangle A$. As
$i$ was arbitrary, we obtain $A \wedge \bigvee_{i=1}^I A_i \to
\langle \alpha \rangle A$.
\end{proof}

A direct and nice corollary to this lemma is that worms satisfy a
certain form of disjunction property.

\begin{corollary}\label{theorem:DisjunctionPropertyWorms}
For $A,A_i \in \worms$ we have that
\[
\gllam \vdash A \to \bigvee_{i=1}^I A_i \ \ \ \ \Leftrightarrow \ \ \
\ \mbox{for some $i\leq I$, }\gllam \vdash A \to  A_i.
\]
\end{corollary}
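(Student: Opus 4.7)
The right-to-left direction is immediate from propositional logic: if $\gllam\vdash A\to A_i$ for some $i$, then certainly $\gllam\vdash A\to\bigvee_{i=1}^I A_i$. For the left-to-right direction I will argue by contrapositive, assuming $\gllam\nvdash A\to A_i$ for every $i\leq I$ and deriving $\gllam\nvdash A\to\bigvee_{i=1}^I A_i$.

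The first step is a bookkeeping move: I pick a single $\alpha\in|\Lambda|$ such that $A$ and every $A_i$ lie in $\worms_\alpha$. The collective set $M$ of modals occurring in $A,A_1,\dots,A_I$ is finite, so if $M\neq\varnothing$ it has a minimum $\alpha$ and then, by definition, all the worms in question belong to $\worms_\alpha$. If instead $M=\varnothing$, then $A=A_1=\cdots=A_I=\top$, in which case $\gllam\vdash A\to A_1$ already and the contrapositive hypothesis fails, so this case is vacuous.

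The second step applies Lemma \ref{theorem:AddingWormsIncreasesOrder} directly to the data $A,A_1,\dots,A_I\in\worms_\alpha$ together with the hypotheses $\gllam\nvdash A\to A_i$; this yields
\[
\gllam\vdash A\wedge\bigvee_{i=1}^I A_i\to\langle\alpha\rangle A.
\]
If, for contradiction, we also had $\gllam\vdash A\to\bigvee_{i=1}^I A_i$, then combining the two implications via propositional logic would give $\gllam\vdash A\to\langle\alpha\rangle A$, i.e.\ $A<_\alpha A$. By Theorem \ref{theorem:Irreflexive} this is impossible, completing the proof.

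The only substantive ingredient is irreflexivity, which is already in hand; everything else is a routine assembly of earlier lemmas. The only potential obstacle is the edge case where some worms have no modals in common with $A$, which is why the first paragraph above fixes a common $\alpha$ as the global minimum of modals appearing in all the worms at once; with that choice in place, Lemma \ref{theorem:AddingWormsIncreasesOrder} applies uniformly.
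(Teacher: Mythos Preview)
Your proof is correct and follows essentially the same route as the paper: contrapositive, then Lemma~\ref{theorem:AddingWormsIncreasesOrder}, then irreflexivity. The paper simply applies the lemma with $\alpha=0$ and invokes irreflexivity of $<_0$, whereas you take the extra care of choosing $\alpha$ as the minimum modal actually occurring in $A,A_1,\dots,A_I$ (and dispose of the all-$\top$ edge case separately); this makes your argument work even when $\Lambda$ has no least element, but otherwise the two proofs are the same.
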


\begin{proof}
We reason about derivability in $\gllam$ by contraposition and suppose
that for each $i\leq I$, $\nvdash A \to A_i$. Then, by Lemma
\ref{theorem:AddingWormsIncreasesOrder} we obtain that $\vdash A
\wedge \bigvee_{i=1}^I A_i\to \langle 0 \rangle A$. Irreflexivity of
$<_0$ imposes that $\nvdash A \to \bigvee_{i=1}^I A_i$, as required.
\end{proof}

\begin{lemma}\label{theorem:WormsClosedUnderDiamonds}
For $A, A_1, \ldots A_k \in \worms_{\alpha}$ we have in $\gllam$ that
either
\begin{itemize}
\item
$\langle \alpha \rangle (A \wedge \bigwedge_i \neg A_i)
\leftrightarrow \langle \alpha \rangle A$, or that
\item
$A \wedge \bigwedge_i \neg A_i \leftrightarrow \bot$ whence also $\langle \alpha
\rangle (A \wedge \bigwedge_i \neg A_i) \leftrightarrow \bot$.
\end{itemize}
\end{lemma}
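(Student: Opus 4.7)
The plan is to dichotomize on whether $A$ entails any of the $A_i$. If $\gllam \vdash A \to A_i$ for some $i$, then already $A \wedge \neg A_i \leftrightarrow \bot$, so $A \wedge \bigwedge_i \neg A_i \leftrightarrow \bot$, putting us in the second case. So assume throughout the rest of the argument that $\gllam \nvdash A \to A_i$ for every $i$; I aim to establish the first alternative by showing that $\langle\alpha\rangle A \to \langle\alpha\rangle B$, where I abbreviate $B := A \wedge \bigwedge_i \neg A_i$ (the converse $\langle\alpha\rangle B \to \langle\alpha\rangle A$ being immediate from $B \to A$ by monotonicity of $\langle\alpha\rangle$).

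Next I would invoke Lemma \ref{theorem:AddingWormsIncreasesOrder} on each $A_i$ to obtain $\gllam \vdash A \wedge \bigvee_{i=1}^k A_i \to \langle\alpha\rangle A$. Combined with the propositional tautology $A \to B \vee (A \wedge \bigvee_i A_i)$, this yields the key implication
\[
\gllam \vdash A \to B \vee \langle\alpha\rangle A. \tag{$\ast$}
\]
At any point, $A$ either is ``last'' (so $B$ holds there) or there is a strictly higher $\alpha$-successor where $A$ still holds.

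The final step is a standard L\"ob-style ``last point'' argument at the modality $\alpha$. From L\"ob's axiom one derives $\langle\alpha\rangle A \to \langle\alpha\rangle(A \wedge [\alpha]\neg A)$ (contrapositive of $[\alpha]([\alpha]\neg A \to \neg A) \to [\alpha]\neg A$ applied to $\neg A$). At any point witnessing $A \wedge [\alpha]\neg A$, the implication $(\ast)$ forces $B$, since $\langle\alpha\rangle A$ is ruled out by $[\alpha]\neg A$. Formally, $(\ast)$ together with $[\alpha]\neg A$ yields $A \to B$ by necessitation and normality is not even needed here --- just Boolean reasoning at the point in question. Thus $\langle\alpha\rangle(A \wedge [\alpha]\neg A) \to \langle\alpha\rangle B$, which combined with the L\"ob step gives $\langle\alpha\rangle A \to \langle\alpha\rangle B$, as required.

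I expect the only mildly delicate step to be the combination of $(\ast)$ with the L\"ob trick: one must be careful that the conclusion $A \to B$ is derived from $(\ast)$ and $[\alpha]\neg A$ only at the ``last'' witness, which is precisely what the syntactic derivation $\langle\alpha\rangle A \to \langle\alpha\rangle(A \wedge [\alpha]\neg A) \to \langle\alpha\rangle(A \wedge [\alpha]\neg A \wedge (B \vee \langle\alpha\rangle A)) \to \langle\alpha\rangle B$ captures. Irreflexivity itself is used implicitly through L\"ob's axiom, but the statement of the lemma does not refer to it; the use of Lemma \ref{theorem:AddingWormsIncreasesOrder} is where the deeper normal-form theory enters.
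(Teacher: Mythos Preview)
Your proof is correct and follows essentially the same approach as the paper: both dichotomize on whether $\gllam\vdash A\to A_i$ for some $i$, invoke Lemma~\ref{theorem:AddingWormsIncreasesOrder} in the remaining case, and then apply L\"ob's axiom at modality $\alpha$. The paper works contrapositively, showing $[\alpha](A\to\bigvee_i A_i)\leftrightarrow[\alpha]\neg A$ via the chain $[\alpha](A\to\bigvee_i A_i)\to[\alpha](A\to\langle\alpha\rangle A)\to[\alpha]\neg A$, while you present the dual ``last point'' argument $\langle\alpha\rangle A\to\langle\alpha\rangle(A\wedge[\alpha]\neg A)\to\langle\alpha\rangle B$; these are the same argument in contrapositive form. (One small quibble with your closing remark: irreflexivity enters through Lemma~\ref{theorem:AddingWormsIncreasesOrder}, whose proof uses it explicitly, rather than ``implicitly through L\"ob's axiom''.)
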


\begin{proof}
All modal reasoning will concern $\gllam$. In case that for some $i$
we have that $\vdash A\to A_i$, clearly $\langle
\alpha \rangle (A \wedge \bigwedge_i \neg A_i) \leftrightarrow
\bot$. In case that for no $i$, $\vdash A\to A_i$ we apply Lemma
\ref{theorem:AddingWormsIncreasesOrder}:
\[
\begin{array}{llll}
[\alpha ] (A \to \bigvee_i A_i) & \to & [\alpha ] (A \to (A \wedge \bigvee_i A_i)) & \mbox{by Lemma \ref{theorem:AddingWormsIncreasesOrder}}\\
 & \to & [\alpha] (A \to \langle \alpha \rangle A) & \mbox{by L\"ob's axiom}\\
 & \to & [\alpha] \neg A & \\
 & \to & [\alpha ] (A \to \bigvee_i A_i)  \\
\end{array}
\]
Thus, $[\alpha ] (A \to \bigvee_i A_i) \leftrightarrow [\alpha] \neg
A$, whence $\langle \alpha \rangle (A \wedge \bigwedge_i \neg A_i)
\leftrightarrow \langle \alpha \rangle A$.
\end{proof}

\begin{corollary}\label{theorem:generalizedWormsClosedUnderDiamonds}
For any worm $A\in \worms$, and $A_1, \ldots A_k \in \worms_{\alpha}$ we have
in $\gllam$ that either
\begin{itemize}
\item
$\langle \alpha \rangle (A \wedge \bigwedge_i
\neg A_i) \leftrightarrow \langle \alpha \rangle A$, or that
\item
$A \wedge \bigwedge_i \neg A_i \leftrightarrow \bot$ whence also  $\langle \alpha \rangle (A \wedge \bigwedge_i \neg A_i)
\leftrightarrow \bot$.
\end{itemize}
\end{corollary}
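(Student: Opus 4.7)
The plan is to reduce the statement to Lemma \ref{theorem:WormsClosedUnderDiamonds} by peeling off the ``large'' modalities of $A$ from the outside. Writing $A$ as a string $\gamma_1 \gamma_2 \cdots \gamma_n$ (so that $A = \langle\gamma_1\rangle \cdots \langle\gamma_n\rangle \top$), let $k$ be the least index with $\gamma_k < \alpha$, or $k := n+1$ if no such index exists. Put $A^h := \gamma_1 \cdots \gamma_{k-1}$, which lies in $\worms_\alpha$, and $A^l := \gamma_k \cdots \gamma_n$, with the conventions $A^h = \top$ if $k = 1$ and $A^l = \top$ if $k = n+1$. Since every modal of $A^h$ is $\geq \alpha > \gamma_k$, a single application of Lemma \ref{theorem:basicLemma}.\ref{lemma:basicLemma3} (trivial when $A^h$ or $A^l$ equals $\top$) yields $\gllam \vdash A \leftrightarrow A^h \wedge A^l$.

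The key observation is that $A^l$ behaves as a ``constant'' under $\langle\alpha\rangle$. When $A^l \neq \top$ its outermost modality $\gamma_k$ is strictly below $\alpha$, so Lemma \ref{theorem:basicLemma}.\ref{lemma:basicLemma1} applied with $\beta := \gamma_k$ gives
\[
\gllam \vdash \langle\alpha\rangle X \wedge A^l \leftrightarrow \langle\alpha\rangle(X \wedge A^l)
\]
for every closed formula $X$; when $A^l = \top$ this equivalence is trivial. Instantiating $X := A^h \wedge \bigwedge_i \neg A_i$ and using the split from the first paragraph,
\[
\gllam \vdash \langle\alpha\rangle\Bigl(A \wedge \bigwedge_i \neg A_i\Bigr) \leftrightarrow A^l \wedge \langle\alpha\rangle\Bigl(A^h \wedge \bigwedge_i \neg A_i\Bigr).
\]

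Since $A^h, A_1, \ldots, A_k$ all lie in $\worms_\alpha$, Lemma \ref{theorem:WormsClosedUnderDiamonds} now applies to the inner diamond. In its first alternative, $\langle\alpha\rangle(A^h \wedge \bigwedge_i \neg A_i) \leftrightarrow \langle\alpha\rangle A^h$, and running Lemma \ref{theorem:basicLemma}.\ref{lemma:basicLemma1} backwards (with $X := A^h$) converts $A^l \wedge \langle\alpha\rangle A^h$ into $\langle\alpha\rangle(A^h \wedge A^l) \leftrightarrow \langle\alpha\rangle A$, as required. In its second alternative, $A^h \wedge \bigwedge_i \neg A_i \leftrightarrow \bot$, whence $A \wedge \bigwedge_i \neg A_i \leftrightarrow A^h \wedge A^l \wedge \bigwedge_i \neg A_i \leftrightarrow \bot$ and $\langle\alpha\rangle(A \wedge \bigwedge_i \neg A_i) \leftrightarrow \bot$ follows automatically. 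The only point demanding care is verifying the split $A \leftrightarrow A^h \wedge A^l$ and checking the corner cases $A^h = \top$ and $A^l = \top$; beyond this bookkeeping no ingredient beyond the already established lemmas is needed.
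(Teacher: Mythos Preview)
Your proof is correct and follows essentially the same approach as the paper: both split $A$ into its maximal $\worms_\alpha$-prefix (your $A^h$, the paper's $A_\alpha$) and the remainder beginning with a modality below $\alpha$ (your $A^l$, the paper's $A_{<\alpha}$), pull the low part out of the $\langle\alpha\rangle$ via Lemma~\ref{theorem:basicLemma}.\ref{lemma:basicLemma1}, apply Lemma~\ref{theorem:WormsClosedUnderDiamonds} to the high part, and then reassemble. Your write-up is in fact a bit more explicit about the corner cases and about which clause of Lemma~\ref{theorem:basicLemma} is invoked at each step.
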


\begin{proof}
We can split $A$ into the largest prefix $A_{\alpha}$ of $A$ that
belongs to $\worms_\alpha$ and the remainder $A_{<\alpha}$of $A$.
Consequently, $A_{<\alpha}$ starts with a symbol smaller than
$\alpha$ or is empty and we have $A = A_{\alpha}A_{<\alpha}
\leftrightarrow A_{\alpha} \wedge A_{<\alpha}$. Thus,
\[
\begin{array}{llll}
\langle \alpha \rangle (A \wedge \bigwedge_i \neg A_i) & \leftrightarrow & \langle \alpha \rangle (A_{\alpha} \wedge A_{<\alpha} \wedge \bigwedge_i \neg A_i)& \\
& \leftrightarrow &  A_{<\alpha}\wedge\langle \alpha \rangle (A_{\alpha}  \wedge \bigwedge_i \neg A_i)& \mbox{first case of Lemma \ref{theorem:WormsClosedUnderDiamonds}}\\
& \leftrightarrow &  A_{<\alpha}\wedge\langle \alpha \rangle A_{\alpha}&\\
& \leftrightarrow &  \langle \alpha \rangle (A_{\alpha}\wedge A_{<\alpha})&\\
& \leftrightarrow &  \langle \alpha \rangle A.&\\
\end{array}
\]
Note that in the second case of Lemma
\ref{theorem:WormsClosedUnderDiamonds} we end up with $\bot$ as
desired.
\end{proof}

\begin{lemma}\label{theorem:closedFormulasClosedUnderDiamonds}
Let $\phi(A_1, \ldots, A_n)$ be a Boolean combination of the  worms $A_1,
\ldots, A_n$. Then $\langle \alpha \rangle \phi(A_1, \ldots, A_n)$
is equivalent in $\gllam$ to some formula ${\sf
Diamond}_{\alpha}(\phi)$ which is a disjunction of conjunctions of
worms or negated worms such that non-empty worms that are not negated have a
first modality $\alpha$ and non-empty worms that are negated have a first
modality strictly less than $\alpha$. Moreover, we have that
${\mods}({\sf Diamond}_{\alpha}(\phi))\subseteq \{ \alpha \}
\cup {\mods}(\phi)$.
\end{lemma}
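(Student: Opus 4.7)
The plan is to first put $\phi(A_1,\ldots,A_n)$ in disjunctive normal form with respect to the worms $A_i$, writing $\phi \leftrightarrow \bigvee_j \psi_j$ with each $\psi_j$ a conjunction of the $A_i$'s and their negations. Since $\langle \alpha \rangle$ distributes over $\vee$ in $\gllam$, it suffices to rewrite each $\langle \alpha \rangle \psi_j$ in the desired shape and then take the disjunction; so I fix one conjunct $\psi = \bigwedge_i B_i \wedge \bigwedge_k \neg C_k$, where each $B_i$ and each $C_k$ is a worm.

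The first concrete step is to collapse the positive block using Corollary \ref{theorem:WormsClosedUnderConjunction} (which, as observed just after Corollary \ref{theorem:WNFisEffectivelyComputable}, applies to arbitrary worms): replace $\bigwedge_i B_i$ by a single equivalent worm $B$. Then, for every worm $A \in \{B, C_1, \ldots, C_k\}$, split it as $A = A^\alpha A^{<\alpha}$, where $A^\alpha$ is the maximal initial segment of $A$ lying in $\worms_\alpha$ and $A^{<\alpha}$ is either empty or begins with some modal $\beta < \alpha$. Iterated use of Lemma \ref{theorem:basicLemma}(\ref{lemma:basicLemma1}) yields $A \leftrightarrow A^\alpha \wedge A^{<\alpha}$, so $\neg C_k \leftrightarrow \neg C_k^\alpha \vee \neg C_k^{<\alpha}$. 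Distributing these disjunctions through the outer conjunction and distributing $\langle \alpha \rangle$ over the resulting $\vee$ again reduces us to conjuncts of the form $B \wedge \bigwedge_l \neg D_l$ in which each $D_l$ is a worm that either lies in $\worms_\alpha$ or begins with a modal strictly less than $\alpha$.

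For such a conjunction I would split further $B = B^\alpha B^{<\alpha}$ and partition the $D_l$'s into those $\bar D_m \in \worms_\alpha$ and the remaining $\tilde D_r = \langle \beta_r \rangle \tilde D_r'$ with $\beta_r < \alpha$. Writing $B^{<\alpha}$ in the same outermost form when non-empty, and using that $\neg \tilde D_r = [\beta_r] \neg \tilde D_r'$, Lemma \ref{theorem:basicLemma}(\ref{lemma:basicLemma1}) and Lemma \ref{theorem:basicLemma}(\ref{lemma:basicLemma2}) commute all of $B^{<\alpha}$ and the $\neg \tilde D_r$ past $\langle \alpha \rangle$, yielding
\[
\langle \alpha \rangle \Big( B \wedge \bigwedge_l \neg D_l \Big) \ \leftrightarrow \ B^{<\alpha} \wedge \bigwedge_r \neg \tilde D_r \wedge \langle \alpha \rangle \Big( B^\alpha \wedge \bigwedge_m \neg \bar D_m \Big).
\]
The inner subformula is precisely of the shape handled by Corollary \ref{theorem:generalizedWormsClosedUnderDiamonds}, which replaces it by either $\langle \alpha \rangle B^\alpha$ or $\bot$. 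In the second case the whole disjunct is $\bot$ and may be dropped; in the first, one last application of Lemma \ref{theorem:basicLemma}(\ref{lemma:basicLemma1}) fuses $B^{<\alpha}$ back into $\langle \alpha \rangle B^\alpha$ to produce $\langle \alpha \rangle B$, a worm whose first modal is $\alpha$.

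Thus every surviving disjunct has the form $\langle \alpha \rangle B \wedge \bigwedge_r \neg \tilde D_r$: one non-negated worm with first modal $\alpha$, together with negated worms each beginning with a modal strictly less than $\alpha$, exactly as required. The bound $\mods({\sf Diamond}_\alpha(\phi)) \subseteq \{\alpha\} \cup \mods(\phi)$ is immediate because the consolidation of positive worms (Corollary \ref{theorem:WormsClosedUnderConjunction}), the syntactic splitting $A = A^\alpha A^{<\alpha}$, and Corollary \ref{theorem:generalizedWormsClosedUnderDiamonds} all preserve the ambient set of modals, while the only newly introduced modal is the $\alpha$ from the outer diamond. The main obstacle in writing this out in full is the bookkeeping: after each distribution and commutation step one must carefully track which conjuncts lie in $\worms_\alpha$ and which begin with a small modal, so that the correct commutation lemma and Corollary \ref{theorem:generalizedWormsClosedUnderDiamonds} are invoked at each stage.
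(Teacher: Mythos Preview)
Your proposal is correct and follows essentially the same route as the paper: put things in DNF, split each worm at the $\alpha$ threshold, commute the low-modal pieces past $\langle\alpha\rangle$ via Lemma~\ref{theorem:basicLemma}, apply Lemma~\ref{theorem:WormsClosedUnderDiamonds}/Corollary~\ref{theorem:generalizedWormsClosedUnderDiamonds} to the remaining $\worms_\alpha$-part, and re-fuse the positive low piece back under the diamond. The only difference is the order of operations---the paper splits each $A_i$ into its $\worms_\alpha$-part and low part \emph{before} passing to DNF, whereas you take DNF first, collapse the positives, and then split---but the key ingredients and the final shape of each disjunct are identical.
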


\begin{proof}
All modal reasoning concerns $\gllam$. Any word $A_i$ in $\phi(A_1, \ldots, A_n)$ is equivalent to
some ${B_i}\wedge {C_i}$ where ${B_i}\in \worms_{\alpha}$ and such that
the first element of ${C_i}$ is less than $\alpha$. Thus,
$\phi(A_1, \ldots , A_n)$ is equivalent to some other Boolean
combination $\psi({B_1}, \ldots , {B_n}, {C_1}, \ldots , {C_n})$ of
the worms ${B_1}, \ldots , {B_n}, {C_1}, \ldots , {C_n}$.

We write $\psi$ in disjunctive normal form. In the remainder of this proof we shall not be too precise in writing indices and subindices as the context should make clear what is meant. As $\langle \alpha
\rangle  \bigvee_j \chi_j \leftrightarrow \bigvee_i \langle \alpha
\rangle \chi_j$, it suffices to prove the lemma for formulas of the
form $\bigwedge_i \pm D_i$ where each $D_i \in \{ {B_1}, \ldots ,
{B_n}, {C_1}, \ldots , {C_n} \}$. By Lemma \ref{theorem:basicLemma}
we see that
\[
\langle \alpha \rangle \bigwedge_i \pm D_i \ \leftrightarrow \
\bigwedge_j \pm {C_i} \wedge \langle \alpha \rangle \bigwedge_k
\pm {B_i}.
\]
As worms are closed under taking conjunctions, we can write
$\bigwedge_k \pm {B_i}$ of the form $B \wedge \bigwedge_l \neg C_l$
where each of $B, C_l \in \worms_{\alpha}$.

Now we can apply Lemma \ref{theorem:WormsClosedUnderDiamonds} to
obtain $\langle \alpha \rangle \bigwedge_k \pm {B_k} \leftrightarrow
\langle \alpha \rangle B$, and
\[
\langle \alpha \rangle \bigwedge_i \pm D_i \ \leftrightarrow \
\bigwedge_j \pm {C_j} \wedge \langle \alpha \rangle B.
\]
All the positive worms in $\bigwedge_i \pm {C_i}$ can be moved
as conjunctions under the $\langle \alpha \rangle$ modality of $\langle \alpha \rangle B$ again to form a single worm as the conjunctions of all
those worms are equivalent to a single one.
\end{proof}

\begin{corollary}\label{theorem:eachClosedFormulaEquivalentToWorms}
Each closed formula $\phi$ is equivalent in $\gllam$ to a Boolean
combination ${\sf BCW}(\phi)$ of worms such that ${\mods}({\sf
BCW}(\phi)) \subseteq {\mods}(\phi)$.
\end{corollary}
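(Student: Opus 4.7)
The plan is to proceed by induction on the construction of the closed formula $\phi$. The base case is $\phi = \top$, which is already a worm (the empty worm $\epsilon$), so there is nothing to do. For the Boolean cases ($\neg \psi$ and $\psi_1 \wedge \psi_2$), the induction hypothesis supplies equivalent Boolean combinations of worms for the subformulas, and a Boolean combination of Boolean combinations of worms is again manifestly a Boolean combination of worms; the containment $\mods({\sf BCW}(\phi)) \subseteq \mods(\phi)$ is preserved trivially since no new modals are introduced.

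The only nontrivial case is the modal one. Since $[\alpha]\psi$ is classically equivalent to $\neg \langle\alpha\rangle \neg\psi$, it suffices to handle $\phi = \langle\alpha\rangle \psi$. By the induction hypothesis, $\psi$ is $\gllam$-equivalent to some Boolean combination $\theta(A_1,\ldots,A_n)$ of worms with $\mods(\theta) \subseteq \mods(\psi)$. Then $\langle\alpha\rangle \psi$ is $\gllam$-equivalent to $\langle\alpha\rangle \theta(A_1,\ldots,A_n)$, and Lemma \ref{theorem:closedFormulasClosedUnderDiamonds} applies directly: it produces a formula ${\sf Diamond}_{\alpha}(\theta)$ which is itself a disjunction of conjunctions of worms and negated worms, hence a Boolean combination of worms. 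Setting ${\sf BCW}(\langle\alpha\rangle \psi) := {\sf Diamond}_{\alpha}(\theta)$ does the job.

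To verify the modal containment in this case, observe that Lemma \ref{theorem:closedFormulasClosedUnderDiamonds} gives $\mods({\sf Diamond}_{\alpha}(\theta)) \subseteq \{\alpha\} \cup \mods(\theta) \subseteq \{\alpha\} \cup \mods(\psi) = \mods(\langle\alpha\rangle \psi)$, as required.

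There is really no main obstacle here: all of the genuine work was done in establishing Lemma \ref{theorem:closedFormulasClosedUnderDiamonds}, which packages together the basic commutation principles of Lemma \ref{theorem:basicLemma}, the closure of worms under conjunction (Corollary \ref{theorem:WormsClosedUnderConjunction}), and the absorption identities of Lemma \ref{theorem:WormsClosedUnderDiamonds}. The present corollary is simply an inductive wrap-up of that lemma across all Boolean and modal constructors, with the modal-containment clause verified clause-by-clause in the induction.
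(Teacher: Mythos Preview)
Your proof is correct and follows exactly the paper's approach: induction on the complexity of $\phi$, with the only nontrivial case being $\langle\alpha\rangle\psi$, which is handled by Lemma~\ref{theorem:closedFormulasClosedUnderDiamonds}. Your write-up is simply more detailed than the paper's terse version, and your explicit verification of the modal-containment clause is a welcome addition.
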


\begin{proof}
By induction on the complexity of $\psi$. The only interesting case
is $\langle \alpha \rangle$ which is taken care of by Lemma
\ref{theorem:closedFormulasClosedUnderDiamonds}. Note that in
principle ${\sf BCW}(\phi)$ need not be unique as, for example, one
could consider various equivalent disjunctive normal forms along the
way of constructing ${\sf BCW}(\phi)$.
\end{proof}

\begin{corollary}\label{theorem:eachClosedFormulaEffectivelyEquivalentToWorms}
For each closed formula $\psi$ of $\gllam$ we can
$\Lambda$-effectively compute an $\gllam$-equivalent formula $\chi$
which is a Boolean combination of worms such that
${\mods}(\chi) \subseteq {\mods}(\phi)$.
\end{corollary}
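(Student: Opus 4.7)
The plan is to convert the non-effective proof of Corollary \ref{theorem:eachClosedFormulaEquivalentToWorms} into a $\Lambda$-effective algorithm by following the same induction on formula complexity while verifying that every step invoked along the way is $\Lambda$-computable. Concretely, I will exhibit a recursive procedure $\mathrm{BCW}(\cdot)$ that on input $\psi$ outputs a formula in disjunctive normal form whose atoms are worms, such that $\gllam\vdash\psi\leftrightarrow\mathrm{BCW}(\psi)$ and $\mods(\mathrm{BCW}(\psi))\subseteq\mods(\psi)$.

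The base and Boolean cases are trivial: for $\top$ return $\top$ (viewed as the empty worm); for $\neg\psi$ and $\psi_1\wedge\psi_2$, recurse and apply the Boolean operator, re-normalizing into disjunctive normal form. The only nontrivial case is $\langle\alpha\rangle\psi$, where by the induction hypothesis we may assume $\psi$ is already a Boolean combination of worms $A_1,\dots,A_n$. Here I will follow the constructive content of Lemma \ref{theorem:closedFormulasClosedUnderDiamonds} step by step. First, each $A_i$ is split as $B_iC_i$ where $B_i\in\worms_\alpha$ and $C_i$ begins with a modality strictly below $\alpha$ (or is empty); this split is $\Lambda$-effective because comparison of modals to $\alpha$ is an oracle query. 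Put the resulting Boolean combination in disjunctive normal form, and distribute $\langle\alpha\rangle$ over the disjuncts. For a single conjunct $\bigwedge_j \pm C_j \wedge \langle\alpha\rangle\bigwedge_k \pm B_k$ (using Lemma \ref{theorem:basicLemma}), collapse the positive $B_k$'s into a single worm $B\in\NF_\alpha$ via Corollary \ref{theorem:effectiveWormConjunction}, yielding an expression of the form $\langle\alpha\rangle(B\wedge\bigwedge_\ell\neg C_\ell)$ with $C_\ell\in\worms_\alpha$.

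At this point the key algorithmic subtask is to decide which of the two cases of Lemma \ref{theorem:WormsClosedUnderDiamonds} applies, i.e., whether $\gllam\vdash B\to C_\ell$ for some $\ell$. This reduces to the ordering decision $B\leq_\alpha C_\ell$ after passing to normal forms via Corollary \ref{theorem:WNFisEffectivelyComputable}, and the ordering is $\Lambda$-decidable by Corollary \ref{theorem:effectiveComparisonOfWorms}. If some such $\ell$ exists, replace the whole conjunct by $\bot$; otherwise replace $\langle\alpha\rangle(B\wedge\bigwedge_\ell\neg C_\ell)$ by $\langle\alpha\rangle B$. The remaining positive $C_j$-factors outside the diamond can then be absorbed inside $\langle\alpha\rangle B$ using Lemma \ref{theorem:basicLemma} and another effective conjunction of worms. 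All modals introduced lie in $\{\alpha\}\cup\mods(\psi)\subseteq\mods(\langle\alpha\rangle\psi)$, preserving the modal-set invariant.

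The main conceptual obstacle is verifying effectiveness of the case distinction in Lemma \ref{theorem:WormsClosedUnderDiamonds}, since this is where the proof uses provability questions rather than syntactic manipulations; but this is exactly subsumed by the decidability of $<_\alpha$ on worms in normal form that was established earlier. Termination of the recursion is clear since every recursive call is on a strictly less complex closed subformula, and each modal case performs only finitely many $\Lambda$-effective subroutines (normal-form reduction, worm comparison, worm conjunction), so the overall procedure is $\Lambda$-effective as required.
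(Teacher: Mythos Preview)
Your overall strategy---recursing on formula complexity and unpacking the constructive content of Lemma~\ref{theorem:closedFormulasClosedUnderDiamonds} and Lemma~\ref{theorem:WormsClosedUnderDiamonds}---is exactly the paper's approach, and you correctly isolate the one non-syntactic step: deciding, for worms $B,C_\ell\in\worms_\alpha$, whether $\gllam\vdash B\to C_\ell$.

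However, your claimed reduction of this question to ``the ordering decision $B\leq_\alpha C_\ell$'' is not correct. The relation $\gllam\vdash B\to C$ is \emph{not} equivalent to $B\leq_\alpha C$ for worms in $\NF_\alpha$. For a concrete counterexample take $\alpha=0$, $B=\langle 1\rangle\top$ and $C=\langle 0\rangle\top$: one has $\gllam\vdash B\to C$ by axiom~(v), yet $C<_0 B$ (since $\gllam\vdash\langle 1\rangle\top\to\langle 0\rangle\langle 0\rangle\top$), so $B\not\leq_0 C$. Conversely, for $B=\langle 0\rangle\langle 1\rangle\top$ and $C=\langle 1\rangle\top$ one has $C<_0 B$ but $\gllam\nvdash B\to C$. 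Thus neither direction of your proposed reduction holds.

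The fix is immediate using the very tools you already invoke. By Corollary~\ref{theorem:effectiveWormConjunction} compute a worm equivalent to $B\wedge C_\ell$, and by Corollary~\ref{theorem:WNFisEffectivelyComputable} bring it to normal form. Then $\gllam\vdash B\to C_\ell$ iff ${\sf NF}(B)={\sf NF}(B\wedge C_\ell)$, which is a syntactic equality check; this equivalence holds because normal forms are unique (Lemma~\ref{theorem:WNFsAreUnique}). With this correction the argument goes through and matches the paper's (terse) proof.
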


\begin{proof}
By inspection of the proofs of Lemma
\ref{theorem:closedFormulasClosedUnderDiamonds} and Lemma
\ref{theorem:WormsClosedUnderDiamonds} we can retrieve a
$\Lambda$-effective recipe. We use that we already know that we can
$\Lambda$-effectively compare two worms and compute their
conjunction.
\end{proof}

\begin{corollary}\label{theorem:zeroDiamondYieldWorms}
For each consistent closed formula $\phi$ there is a worm $A$ with
${\mods}(A) = {\mods}(\phi)\cup \{ 0\}$ so that $\gllam
\vdash \langle 0\rangle \phi \leftrightarrow A.$

Moreover, $\gllam^0 \vdash \langle \max(A)\rangle^{l(A)}
\top \ \to \ A.$
\end{corollary}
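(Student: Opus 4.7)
The plan is to prove the ``moreover'' assertion by an easy induction on the length of $A$ using axiom $(\romannumeral5)$, and to establish the main equivalence in three stages: first decompose $\phi$ into a Boolean combination of worms, then apply $\langle 0\rangle$ to obtain a disjunction of worms each starting with $0$, and finally collapse this disjunction to a single worm. For the ``moreover'' part, if $A=\alpha_1\alpha_2\cdots\alpha_n$ and $M=\max(A)$, axiom $(\romannumeral5)$ yields $\langle M\rangle\chi\to\langle\alpha_i\rangle\chi$ for every $\alpha_i\le M$, and a straightforward induction on $n$ using necessitation and the $K$-axiom delivers $\langle M\rangle^{n}\top\to A$ in $\gllam$; being closed, this is already a theorem of $\gllam^0$.

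For the main equivalence I first apply Corollary \ref{theorem:eachClosedFormulaEquivalentToWorms} to rewrite $\phi$ as a Boolean combination of worms, and then put it in disjunctive normal form $\phi\equiv\bigvee_i\bigl(C_i\wedge\bigwedge_k\neg D_{i,k}\bigr)$, where the $C_i$ and $D_{i,k}$ are worms whose modalities lie in $\mods(\phi)$. Distributing the outer $\langle 0\rangle$ over the disjunction and invoking Corollary \ref{theorem:generalizedWormsClosedUnderDiamonds} with $\alpha=0$ on each disjunct---noting that since no modality is strictly below $0$, any non-empty negated worm would force the conjunction to $\bot$---each $\langle 0\rangle\bigl(C_i\wedge\bigwedge_k\neg D_{i,k}\bigr)$ collapses to either $\langle 0\rangle C_i=0C_i$ or $\bot$. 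Discarding the $\bot$-disjuncts I obtain
\[
\gllam\vdash\langle 0\rangle\phi\leftrightarrow\bigvee_{i\in I'}0C_i,
\]
where $I'\neq\emptyset$: the consistency of $\phi$ produces at least one consistent conjunct $C_i\wedge\bigwedge_k\neg D_{i,k}$, and by Corollary \ref{theorem:DisjunctionPropertyWorms} this forces $C_i\not\to D_{i,k}$ for every $k$, so the associated disjunct is $\langle 0\rangle C_i=0C_i$ rather than $\bot$; this worm is itself consistent by Theorem \ref{theorem:Irreflexive}.

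The third and pivotal step is to collapse $\bigvee_{i\in I'}0C_i$ into a single worm. Each $0C_i$ lies in $\worms_0$ and starts with the modality $0$; by Lemmas \ref{theorem:ExistNormalForm} and \ref{theorem:LinearOrder} these worms are linearly ordered by $<_0$ modulo $\gllam$-equivalence, so there is a $<_0$-minimum $A:=0C_k$ in the finite family. For any other $i\in I'$ we have $A\leq_0 0C_i$, which by definition means either $0C_i\equiv A$ or $0C_i\to\langle 0\rangle A=\langle 0\rangle\langle 0\rangle C_k$; in the latter case the dual of axiom $(\romannumeral3)$ at $\alpha=\beta=0$ yields $\langle 0\rangle\langle 0\rangle C_k\to\langle 0\rangle C_k=A$, so $0C_i\to A$ in any case. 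Combined with the trivial converse this gives $\gllam\vdash\langle 0\rangle\phi\leftrightarrow A$. Since $A$ starts with $0$ and the modality sets are controlled at every stage by the lemmata used, we also obtain $0\in\mods(A)\subseteq\mods(\phi)\cup\{0\}$.

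The main obstacle lies in this third step: the collapse works only because each disjunct $0C_i$ begins with the outer $\langle 0\rangle$, so that the transitivity principle $\langle 0\rangle\langle 0\rangle\psi\to\langle 0\rangle\psi$ (the dual of axiom $(\romannumeral3)$) turns the $<_0$-inequality defining the minimum into a genuine $\gllam$-implication. For an arbitrary finite disjunction of worms the reduction need not succeed---already $1\vee 01$ is not $\gllam$-equivalent to any single worm---and it is precisely the position of the leading $\langle 0\rangle$ that rescues the argument here.
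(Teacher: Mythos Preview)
Your proof is correct and follows essentially the same route as the paper: write $\phi$ in DNF over worms, distribute $\langle 0\rangle$, collapse each disjunct to $\langle 0\rangle C_i$ or $\bot$ via Lemma~\ref{theorem:WormsClosedUnderDiamonds}/Corollary~\ref{theorem:generalizedWormsClosedUnderDiamonds}, and then use the linear order on worms (the paper cites Corollary~\ref{theorem:ComparingWorms}) to reduce the surviving disjunction to its $<_0$-minimum. Your parenthetical claim that ``any non-empty negated worm would force the conjunction to $\bot$'' is not correct as stated (e.g.\ $\top\wedge\neg\langle 0\rangle\top$ is consistent), but it is harmless, since Corollary~\ref{theorem:generalizedWormsClosedUnderDiamonds} already delivers the dichotomy you need; on the other hand your argument for $I'\neq\emptyset$ via Corollary~\ref{theorem:DisjunctionPropertyWorms} is more careful than the paper's, and your $\mods(A)\subseteq\mods(\phi)\cup\{0\}$ is in fact all that the argument yields.
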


\begin{proof}

Write
$\phi$ in disjunctive normal form where the
atoms are worms. As $\la 0\ra$ distributes over our disjunction, to each disjunct we apply Lemma
\ref{theorem:WormsClosedUnderDiamonds}. As $\varphi$ was consistent, so is each of the disjuncts whence  each disjunct is equivalent $\langle 0\rangle A_i$ for some
worm $A_i$. Thus, we end up with a disjunction of worms that start with a
$\langle 0\rangle$ modality.  Corollary \ref{theorem:ComparingWorms}
tells us that there is a `minimal' disjunct and thus we see that
such a disjunction can actually be replaced by a single disjunct.

By an easy proof similar to that of Lemma
\ref{theorem:RemoveMultipleMinimals}, we further see that
\[
\gllam^0 \vdash \langle \max(A)\rangle^{l(A)} \top \ \to \
A,
\]
from which our second claim immediately follows.
\end{proof}

Corollary \ref{theorem:zeroDiamondYieldWorms} has an important
consequence for the model theory of $\gllam^0$. This result is used in
\cite{FernandezJoosten:2012:ModelsOfGLP} to give a completeness
proof for certain models of the closed fragment. Namely, if we have
a Kripke frame $\mathfrak F$ such that $\mathfrak F\models
\gllam^0$ and we wish to check that $\gllam^0$ is
moreover {\em complete} for $\mathfrak F$, it suffices to check that
$\mathfrak F$ satisfies enough worms:

\begin{corollary}\label{theorem:CompletenessFollowsFromSoundness}
Suppose $\mathfrak F=\langle W,\langle
R_\xi\rangle_{\xi<\Lambda}\rangle$ is any Kripke frame such that
$\mathfrak F\models\glp^0_\Lambda$ and, for all $\lambda<\Lambda$
and $n<\omega$, there is $w\in W$ such that $\mathfrak
F,w \models \langle \lambda\rangle^n\top$.

Then, for every consistent closed formula $\phi$ there is $w\in W$
such that $\mathfrak F,w\models\phi$.

If $\Lambda$ is a limit ordinal, it suffices to consider $n=1$.
\end{corollary}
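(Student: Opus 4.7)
The plan is to leverage Corollary \ref{theorem:zeroDiamondYieldWorms} directly. Given a consistent closed formula $\phi$, that result produces a worm $A$ such that $\gllam\vdash \langle 0 \rangle\phi\leftrightarrow A$ and, more importantly, $\gllam^0\vdash\langle\max(A)\rangle^{l(A)}\top\to A$. Since $\max(A)<\Lambda$ and $l(A)<\omega$, the hypothesis on $\mathfrak F$ supplies some $w\in W$ with $\mathfrak F,w\models\langle\max(A)\rangle^{l(A)}\top$. Because $\gllam^0$ is valid on $\mathfrak F$, this propagates through the implication to give $\mathfrak F,w\models A$, hence $\mathfrak F,w\models\langle 0\rangle\phi$; any $R_0$-successor $w'$ of $w$ then satisfies $\mathfrak F,w'\models\phi$, as required.

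For the final clause, where $\Lambda$ is a limit ordinal, the idea is to reduce to the previous paragraph by replacing the witness $\langle\max(A)\rangle^{l(A)}\top$ by $\langle\max(A)+1\rangle\top$. Since $\Lambda$ is a limit, $\max(A)+1<\Lambda$, so by hypothesis (now needed only with $n=1$) there exists $w\in W$ with $\mathfrak F,w\models\langle\max(A)+1\rangle\top$. I would then invoke the auxiliary schema $\gllam^0\vdash\langle\beta+1\rangle\top\to\langle\beta\rangle^n\top$, valid for all $n<\omega$ and $\beta<\Lambda$, in order to conclude $\mathfrak F,w\models\langle\max(A)\rangle^{l(A)}\top$; from there the argument of the first paragraph finishes the proof.

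The only nonroutine ingredient is this auxiliary schema, which I expect to be the main obstacle. The cleanest justification is that the formula $\langle\beta+1\rangle\top\to\langle\beta\rangle^n\top$ involves only the two modals $\beta$ and $\beta+1$, so by Theorem \ref{conserv} combined with Lemma \ref{isom} its provability in $\gllam$ is equivalent to the provability of $\langle 1\rangle\top\to\langle 0\rangle^n\top$ in $\glp_\omega$, which is a classical theorem of Japaridze's logic. With this schema in hand, both cases follow immediately from Corollary \ref{theorem:zeroDiamondYieldWorms} and the assumed soundness of $\gllam^0$ on $\mathfrak F$.
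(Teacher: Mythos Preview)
Your proposal is correct and follows essentially the same route as the paper's own proof, which likewise invokes Corollary~\ref{theorem:zeroDiamondYieldWorms} to obtain $\langle 0\rangle\phi$ at a point satisfying a suitable iterated consistency statement, and for the limit case uses the implication $\langle\max\phi+1\rangle\top\to\langle\max\phi\rangle^{l(\phi)}\top$ (which the paper simply asserts, whereas you justify it via the reduction of Theorem~\ref{conserv}). One minor wording slip: from $\mathfrak F,w\models\langle 0\rangle\phi$ you should conclude that \emph{some} $R_0$-successor of $w$ satisfies $\phi$, not that \emph{any} one does.
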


\proof Suppose that $\mathfrak F,w \models \langle
\lambda\rangle^n\top$ for all $n<\omega$ and $\lambda<\Lambda$ and
$\phi$ is consistent.

Then we have in particular that for some $w\in W$, $\mathfrak
F,v\models \langle \max\phi\rangle^{l(\phi)}\top$, so that by
Corollary \ref{theorem:zeroDiamondYieldWorms} we also have
$\mathfrak F,v\models \langle 0\rangle\phi$. But then
we have $w$ with $v\mathrel R_0 w$ and $\mathfrak
F,w\models \phi$, i.e., $\phi$ is satisfied on $\mathfrak F$,
as claimed.

If $\Lambda$ is a limit ordinal we observe that
\[
\glp^0_\Lambda \vdash \langle\max\phi+1\rangle
\top \rightarrow \langle \max\phi\rangle^{l(\phi)}\top,
\]
so we may
choose $v$ satisfying $\langle\max\phi+1\rangle \top$ instead.
\endproof

Note that this corollary is here stated for Kripke semantics but actually holds true for any reasonable notion of $\gllam$ semantics.


\section{Alternative axiomatizations}

In \cite{BeklemishevJoostenVervoort:2005:FinitaryTreatmentGLP} it
was observed that one could simultaneously restrict L\"ob's axiom
and the monotonicity axiom $\la \alpha \ra \phi \to \la \beta \ra
\phi$ for $\alpha \geq \beta$ to worms and still obtain a full
axiomatization of $\glp_\omega^0$. In this section we shall prove
that we can also simultaneously restrict the axiom of negative
introspection $\la \alpha \ra \phi \to [\beta ]\la \alpha \ra \phi$
with $\alpha < \beta$ to worms and still obtain a full
axiomatization of $\glp_\omega^0$. In order to prove this, we need
to recall the decision procedure as exposed in
\cite{Beklemishev:2005:VeblenInGLP}.

\subsection{A decision procedure}

\begin{theorem}\label{theorem:DecisionProcedure}
There is a $\Lambda$-effective decision procedure for
$\gllam^0\vdash \phi$.
\end{theorem}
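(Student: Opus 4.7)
\emph{Plan.} My plan is to reduce the problem to the known decidability of $\glp_\omega^0$, relying on the conservativity result of Section 3. Given a closed formula $\phi\in\cL_\Lambda^0$, I would first extract the finite set $F:=\mods(\phi)$ of modals occurring in $\phi$ and, using the $\Lambda$-oracle, enumerate $F$ in increasing order as $\xi_0<\xi_1<\cdots<\xi_{n-1}$. Writing $\xi$ for this enumeration, form $\hat\phi:=\xi^{-1}(\phi)$ as in Lemma \ref{isom}. By the Corollary following Theorem \ref{conserv}, $\gllam\vdash\phi$ iff $\glp_\omega\vdash\hat\phi$, so it suffices to decide the latter question.

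\emph{Deciding $\glp_\omega^0$.} Decidability of $\glp_\omega^0$ is already in hand from the tools assembled in Section 2: Lemma \ref{two} reduces $\glp_\omega\vdash\hat\phi$ to $\J_\omega\vdash N^+(\hat\phi)\to\hat\phi$, and the completeness proposition for finite $\J_\omega$-models makes the latter effectively checkable by brute search over models of bounded size. Only the initial sorting of $F$ uses the $\Lambda$-oracle; every other step is a standard primitive-recursive computation on finite syntactic objects. Composing these steps yields the desired $\Lambda$-effective decision procedure.

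\emph{An alternative route, and the main obstacle.} A more self-contained approach would use the closed-fragment machinery developed in the preceding sections. Using Corollaries \ref{theorem:eachClosedFormulaEffectivelyEquivalentToWorms}, \ref{theorem:WNFisEffectivelyComputable}, and \ref{theorem:effectiveWormConjunction}, I would $\Lambda$-effectively rewrite $\neg\phi$ as a disjunction $\bigvee_j(A_j\wedge\bigwedge_i\neg B_{ij})$ in which each $A_j,B_{ij}\in\NF_0$. Since $\gllam^0\vdash\phi$ iff $\neg\phi$ is inconsistent iff every such disjunct is inconsistent, and since by Corollary \ref{theorem:DisjunctionPropertyWorms} a disjunct $A\wedge\bigwedge_i\neg B_i$ is inconsistent iff $\gllam\vdash A\to B_i$ for some $i$, the whole problem reduces to deciding worm-to-worm implication $A\to B$. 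This last step is the hard part of the alternative route: linearity of $<_0$ alone does not settle it, since $A\to B$ is not in general equivalent to $A\geq_0 B$. One would in the end fall back on the same $\hat\cdot$-reduction to $\glp_\omega$, so for a concise proof I would prefer the direct route given in the first two paragraphs.
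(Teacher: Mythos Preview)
Your main route---reduce $\phi$ to $\hat\phi\in\cL^0_\omega$ via the order-isomorphism on $\mods(\phi)$ and then invoke decidability of $\glp_\omega$ through the $\J_\omega$ reduction---is correct and economical. It is, however, not the route the paper takes. The paper proves the theorem entirely with the worm machinery of Sections~\ref{section:AlternativeAxiomatization} and~\ref{section:NormalFormClosedFormulas}: rewrite $\phi$ in conjunctive normal form over worms, so that provability of $\phi$ becomes a conjunction of questions of the form $A_i\to\bigvee_j B_{ij}$; apply the disjunction property (Corollary~\ref{theorem:DisjunctionPropertyWorms}) to reduce each to a finite set of implications $A_i\to B_{ij}$; and finally decide each such implication \emph{syntactically}.

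That last step is precisely the ``hard part'' you flagged in your alternative route, and the paper resolves it without any detour through $\glp_\omega$: by Corollary~\ref{theorem:WormsClosedUnderConjunction} and Lemma~\ref{theorem:WNFsAreUnique}, $\gllam\vdash A\to B$ iff $\mathsf{NF}(A)=\mathsf{NF}(A\wedge B)$, and both normal forms are $\Lambda$-effectively computable (Corollaries~\ref{theorem:WNFisEffectivelyComputable} and~\ref{theorem:effectiveWormConjunction}). So worm implication reduces to a syntactic equality test on normal forms---no appeal to $<_0$-comparability is needed, and no fallback to $\glp_\omega$ is required.

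What each approach buys: yours is shorter and piggybacks on established facts about $\glp_\omega$, but it treats the closed fragment as a black box. The paper's approach is self-contained within the worm calculus and, crucially, is reused verbatim in the proof of Theorem~\ref{theorem:ConservativeFragments}, where one must argue that the \emph{same} decision procedure goes through in the restricted system $\mathsf{w{-}GLP_\Lambda^0}$; your reduction to $\glp_\omega$ would not obviously survive that restriction.
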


\begin{proof}
We shall first outline a decision procedure and then see that this
is indeed effective. By Corollary
\ref{theorem:eachClosedFormulaEquivalentToWorms} we know that each
closed formula $\phi$ is equivalent in $\gllam$ to a Boolean
combination of worms. We can write this Boolean combination in
conjunctive normal form and as worms are closed under conjunctions,
each conjunct can be written of the form $A_i \to \bigvee_j B_{ij}$
with each $A_i$ and $B_{ij}$ in WNF. Let us call this the \emph{worm
normal form} and we write ${\sf WNF}(\varphi)$.

The decision procedure is represented by the following scheme:

\[
\begin{array}{llll}
\gllam^0 \vdash \phi & \Leftrightarrow & \gllam^0 \vdash {\sf WNF}(\phi)& \\
 & \Leftrightarrow & \gllam^0 \vdash \bigwedge_i (A_i \to \bigvee_jB_{ij}) & \\
 & \Leftrightarrow & \forall i\ \gllam^0 \vdash A_i \to \bigvee_j B_{ij} & \mbox{by Lemma \ref{theorem:DisjunctionPropertyWorms}}\\
 & \Leftrightarrow & \forall i\, \exists j\  \gllam^0 \vdash A_i \to B_{ij} & \\
 & \Leftrightarrow & \forall i\, \exists j\  \gllam^0 \vdash A_i \leftrightarrow A_i\wedge  B_{ij} & \mbox{by Lemma \ref{theorem:WNFsAreUnique}}\\
  & & &\mbox{and Corollary \ref{theorem:WormsClosedUnderConjunction}}\\
& \Leftrightarrow & {\mods}(\phi) \subseteq \Lambda \ \mbox{ and } \ & \\
& &\forall i\, \exists j\ {\sf NF}(A_i) = {\sf NF}(A_i \wedge
B_{ij})
\end{array}
\]
The ${\mods}(\phi) \subseteq \Lambda$ in the last line we have
in virtue of our conservation result as stated in \ref{conserv}. In
order to see that the above equivalences yield a $\Lambda$-effective
decision procedure, there are three major things that we need to
check.
\begin{enumerate}
\item\label{item:eachClosedFormulaEffectivelyEquivalentToWorms}
${\sf WNF}(\phi)$ can be $\Lambda$-effectively computed from a
closed formula $\phi$;
\item\label{item:WNFisEffectivelyComputable}
${\sf NF}(A)$  can be $\Lambda$-effectively computed from a worm
$A$;
\item\label{item:effectiveWormConjunction}
The worm corresponding to $A\wedge B$ can be $\Lambda$-effectively
computed from $A$ and $B$.
\end{enumerate}
But, Item \ref{item:effectiveWormConjunction} is just Corollary
\ref{theorem:effectiveWormConjunction}, Item
\ref{item:WNFisEffectivelyComputable} is just Corollary
\ref{theorem:WNFisEffectivelyComputable}, and Item
\ref{item:eachClosedFormulaEffectivelyEquivalentToWorms} follows
directly from Corollary
\ref{theorem:eachClosedFormulaEffectivelyEquivalentToWorms} and
Corollary \ref{theorem:effectiveWormConjunction}.
\end{proof}

In practice we will always only be interested in notation systems
that are easy, say primitive recursive, for which the following
corollary is relevant.

\begin{corollary}
For each effective ordinal $\Lambda$, there is an effective decision
procedure for $\gllam^0\vdash \phi$.
\end{corollary}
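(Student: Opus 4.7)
The plan is to reduce the corollary directly to Theorem \ref{theorem:DecisionProcedure}. That theorem already supplies a $\Lambda$-effective decision procedure for $\gllam^0 \vdash \phi$, where $\Lambda$-effective means effective modulo an oracle that decides the relation $\alpha < \beta$ for $\alpha,\beta \in |\Lambda|$ (cf.\ the earlier definition preceding Corollary \ref{theorem:effectiveComparisonOfWorms}). So the task is essentially to observe that the oracle can be eliminated once $\Lambda$ itself is given effectively.

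More precisely, first I would unpack what ``effective ordinal $\Lambda$'' means: a representation of $|\Lambda|$ by natural numbers (codes) together with a total recursive function deciding, for any two codes, whether the first represents an element strictly below the second in $\Lambda$. Next I would invoke Theorem \ref{theorem:DecisionProcedure} to obtain the $\Lambda$-effective algorithm, and note that every call this algorithm makes to the comparison oracle is of exactly the form covered by the assumed recursive comparison function on codes. Replacing each oracle call by a call to this recursive function gives a plain effective procedure, which is the decision procedure required.

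I do not expect any real obstacle here, since nothing in the proof of Theorem \ref{theorem:DecisionProcedure} (or in the $\Lambda$-effective subroutines it invokes, Corollaries \ref{theorem:effectiveComparisonOfWorms}, \ref{theorem:effectiveWormConjunction}, \ref{theorem:WNFisEffectivelyComputable}, and \ref{theorem:eachClosedFormulaEffectivelyEquivalentToWorms}) uses anything about $\Lambda$ beyond the ability to compare two of its elements. The mild point to be careful about is that the input formula $\phi$ mentions only finitely many elements of $|\Lambda|$, so their codes are supplied with $\phi$ itself; hence the comparison function is only ever queried on codes that actually appear in the computation, and no enumeration of the whole of $|\Lambda|$ is needed. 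This observation, combined with the straightforward oracle-elimination above, completes the argument.
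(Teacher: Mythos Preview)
Your proposal is correct and matches the paper's approach: the paper states this corollary without proof, treating it as an immediate consequence of Theorem \ref{theorem:DecisionProcedure}, and your argument spells out precisely the obvious oracle-elimination that justifies this.
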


In virtue of Theorem \ref{conserv} we knew already that
$\glp^0_\Lambda$ has a very easy reduction to $\glp^0_\omega$ where
the latter is know tho be {\sc PSpace} complete.

\begin{corollary}
If the ordering on $\Lambda$ is decidable in poly-time, then the
computational complexity of $\gllam^0$ is {\sc PSpace}
complete.
\end{corollary}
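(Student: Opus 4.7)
The plan is to prove PSpace-completeness by separately establishing PSpace-membership and PSpace-hardness, using in both directions the reduction between $\gllam^0$ and $\glp^0_\omega$ already developed in the paper.

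For the PSpace upper bound, I would invoke the corollary immediately following Theorem~\ref{conserv}, which asserts that $\gllam \vdash \phi$ iff $\glp_\omega \vdash \hat\phi$, where $\hat\phi$ is obtained by replacing each modal occurring in $\phi$ by its rank in the enumeration $\xi$ of $F := \mods(\phi)$. Given the poly-time oracle for $<$ on $\Lambda$, the set $F$ is extracted in linear time and sorted in polynomial time; each occurrence of a modal in $\phi$ is then replaced by its (binary-encoded) rank. The resulting formula $\hat\phi$ has length polynomial in $|\phi|$ and is produced in polynomial time. Feeding $\hat\phi$ to the known PSpace decision procedure for $\glp^0_\omega$ then decides $\gllam^0 \vdash \phi$ in PSpace.

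For the PSpace lower bound I would go in the reverse direction: given $\psi \in \cL^0_\omega$ involving only the modals $0,\ldots,n-1$, pick $n$ elements $\alpha_0 < \cdots < \alpha_{n-1}$ of $|\Lambda|$ and replace each $[i]$ in $\psi$ by $[\alpha_i]$ to obtain $\xi(\psi)$. By Lemma~\ref{isom} together with Theorem~\ref{conserv}, $\glp^0_\omega \vdash \psi$ iff $\gllam^0 \vdash \xi(\psi)$. Since $\glp^0_\omega$ is PSpace-hard, so is $\gllam^0$, provided $\xi$ itself is poly-time computable; this requires being able to exhibit an initial segment of $|\Lambda|$ of length $n$ in poly-time, a mild side assumption complementing poly-time decidability of $<$ and automatic in every case of interest (for instance, any primitive-recursive presentation).

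The main obstacle I anticipate is really on the lower bound: strictly speaking, a poly-time ordering on $|\Lambda|$ does not a priori yield a poly-time enumeration of representatives of $|\Lambda|$, so either one reads the hypothesis to supply such an enumeration or states the corollary under this slightly strengthened side condition. The upper bound, by contrast, is an essentially mechanical consequence of the reduction $\phi \mapsto \hat\phi$ together with the normalization machinery already established earlier in the paper.
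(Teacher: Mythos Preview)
Your proposal is correct and takes essentially the same approach as the paper: both use the poly-time reduction $\phi \mapsto \hat\phi$ furnished by Theorem~\ref{conserv} together with Pakhomov's result that $\glp_\omega^0$ is {\sc PSpace}-complete. Your treatment is in fact more careful than the paper's, which states only the upward reduction explicitly and leaves the hardness direction (and the side condition you flag about producing representatives of $|\Lambda|$) implicit.
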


\begin{proof}
Theorem \ref{conserv}, provides a poly-time reduction from
$\gllam^0$ to $\glp_{\omega}^0$. Although the closed
fragment for \gl is decidable in {\sc PTime}
(\cite{ChagrovRybakov:2003:HowManyVariablesForPSPACE}), Pakhomov has
shown (\cite{Pakhomov:2011:ComplexityResultsInGLP}) that the closed
fragment of $\glp_{\omega}$ is {\sc PSpace} complete.
\end{proof}

\subsection{Restricting to worms}

We are now ready to prove the main theorem of this section. By
$\mathsf{w{-}GLP_\Lambda^0}$ we denote the logic that is as
$\glp^0_\Lambda$ but the axioms
\[
\begin{array}{ll}
[\alpha] ([\alpha] A \to A) \to [\alpha] A & \\
\la \alpha \ra A \to \la \beta \ra A & \alpha \geq \beta\\
\la \alpha \ra A \to [\beta ]\la \alpha \ra A & \alpha < \beta\\
\end{array}
\]
restricted to worms $A$.

\begin{theorem}\label{theorem:ConservativeFragments}
The logics $\mathsf{w{-}GLP_\Lambda^0}$ and $\gllam^0$ prove
the same set of theorems.
\end{theorem}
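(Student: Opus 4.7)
The inclusion $\mathsf{w{-}GLP_\Lambda^0}\vdash\phi \Rightarrow \gllam^0\vdash\phi$ is immediate, since every axiom of the former is an axiom of the latter. My plan for the non-trivial direction is to trace through the decision procedure of Theorem \ref{theorem:DecisionProcedure} and certify that each step it invokes is already derivable in $\mathsf{w{-}GLP_\Lambda^0}$. Thus whenever $\gllam^0\vdash\phi$, the procedure produces, for each conjunct $A_i\to\bigvee_j B_{ij}$ of ${\sf WNF}(\phi)$, a witness $j$ with ${\sf NF}(A_i)={\sf NF}(A_i\wedge B_{ij})$, and from this combinatorial data I would reconstruct a formal $\mathsf{w{-}GLP_\Lambda^0}$-derivation of $\phi$.

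The plan decomposes into verifying three claims inside $\mathsf{w{-}GLP_\Lambda^0}$: (a) any worm is provably equivalent to its WNF; (b) the conjunction of two worms is provably equivalent to a single worm; and (c) any closed formula is provably equivalent to a Boolean combination of worms. For (a) and (b), the proofs of Lemmas \ref{theorem:RemoveMultipleMinimals}, \ref{theorem:gettingRidOfSmallTerms}, \ref{theorem:ExistNormalForm} and Corollary \ref{theorem:WormsClosedUnderConjunction} apply L\"ob, monotonicity and backward introspection only to worms, and so transfer verbatim. Once (a)--(c) are granted, the identity ${\sf NF}(A_i)={\sf NF}(A_i\wedge B_{ij})$ yields $A_i \leftrightarrow A_i\wedge B_{ij}$, hence $A_i \to B_{ij}$, in $\mathsf{w{-}GLP_\Lambda^0}$, and a purely propositional rearrangement recovers $\phi$.

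The main obstacle lies in (c), specifically in the proof of Lemma \ref{theorem:basicLemma}(1)--(2), which invokes axiom (iv) on an arbitrary closed formula. To overcome this I would proceed by induction on the complexity of $\phi$: before applying Lemma \ref{theorem:closedFormulasClosedUnderDiamonds} to a subformula $\la\alpha\ra\phi$, first use the induction hypothesis to rewrite $\phi$ in $\mathsf{w{-}GLP_\Lambda^0}$ as a disjunction of conjunctions of worms and negated worms. Inspection of the proof of that lemma then shows that the applications of Lemma \ref{theorem:basicLemma}(1) are to factors $\la\beta\ra\psi$ with $\psi$ a worm (the relevant $C_i$-tail), and the applications of Lemma \ref{theorem:basicLemma}(2) on negated worms $\neg\la\beta\ra\psi=[\beta]\neg\psi$ reduce, after one negation step, to an instance $\la\beta\ra\psi \to [\alpha]\la\beta\ra\psi$ whose inner formula is again a worm. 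Hence every invocation of axiom (iv) along the reduction can be arranged to be at a worm, so the entire computation is internalised in $\mathsf{w{-}GLP_\Lambda^0}$, and the proof is complete.
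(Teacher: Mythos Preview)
Your approach is correct and matches the paper's: both ultimately verify that every lemma feeding into the decision procedure invokes L\"ob, monotonicity and negative introspection only at worms, so that the procedure runs unchanged inside $\mathsf{w{-}GLP_\Lambda^0}$ and any $\gllam^0$-theorem can be re-derived there. The only organisational difference is that the paper first derives the \emph{unrestricted} forms of axioms (iv) and (v) explicitly inside $\mathsf{w{-}GLP_\Lambda^0}$---by rewriting an arbitrary closed $\phi$ as a worm-DNF and manipulating---and falls back on the decision-procedure argument only for L\"ob, whereas you apply that argument uniformly to all three schemata; your explicit induction on the complexity of $\phi$ for claim~(c) in fact spells out the step the paper's derivation of (iv)/(v) leaves implicit, namely that the rewriting of $\phi$ as a Boolean combination of worms must itself be carried out inside the restricted system.
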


\begin{proof}
We will first prove
\[
\begin{array}{ll}
\langle \alpha \rangle \phi \to \langle \beta \rangle \phi& \mbox{ for $\alpha \geq \beta$ and }\\
\langle \alpha \rangle \phi \to [\beta] \langle \alpha \rangle \phi
& \mbox{for $\alpha < \beta$}
\end{array}
\]
for $\phi$ any closed formula within $\mathsf{w{-}GLP_\Lambda^0}$.
We  write $\phi$ in disjunctive normal form as $\bigvee_i  (A_i
\wedge \bigwedge_j \neg B_{ij} \wedge \bigwedge_k \neg C_{ik})$
where each $B_{ij} \in \worms_{\alpha}$ and each $C_{ik}$ starts with a
modality smaller than $\alpha$.

When $\langle \alpha \rangle \phi \leftrightarrow \bot$ there is
nothing to prove, so we may assume that $\nvdash A_i \to B_{ij}$ and
$\nvdash A_i \to C_{ik}$ and use Corollary
\ref{theorem:generalizedWormsClosedUnderDiamonds} to see that for
each $i$ we have that
\[
\begin{array}{lll}
\langle \alpha \rangle (A_i \wedge \bigwedge_j \neg B_{ij} \wedge \bigwedge_k \neg C_{ik})& \leftrightarrow & \langle \alpha \rangle (A_i \wedge \bigwedge_j \neg B_{ij} \wedge \bigwedge_k \neg C_{ik})\\
& \leftrightarrow &  \bigwedge_k \neg C_{ik} \wedge \langle \alpha \rangle (A_i \wedge \bigwedge_j \neg B_{ij} )\\
& \leftrightarrow &  \bigwedge_k \neg C_{ik} \wedge \langle \alpha \rangle A_i \\
& \leftrightarrow &   \langle \alpha \rangle (A_i \wedge \bigwedge_k \neg C_{ik}).\\
\end{array}
\]
Let us first see that $\langle \alpha \rangle \phi \to \langle \beta
\rangle \phi \mbox{ for $\alpha \geq \beta$}$. We observe that $\worms_{\alpha} \subset \worms_{\beta}$. We shall write $\bigwedge_k
\neg C_{ik}$ as $\bigwedge_{k'} \neg C_{ik'} \wedge \bigwedge_l \neg
D_{il}$ where the first modality in each $C_{ik'}$ is strictly below
$\beta$ and the first modality in each $D_{il}$ is between $\beta$
and strictly below $\alpha$.
\[
\begin{array}{llll}
\langle \alpha \rangle \phi & \to & \langle \alpha \rangle \bigvee_i  (A_i \wedge \bigwedge_j \neg B_{ij} \wedge \bigwedge_k \neg C_{ik})\\
 & \to &  \bigvee_i  \langle \alpha \rangle(A_i \wedge \bigwedge_j \neg B_{ij} \wedge \bigwedge_k \neg C_{ik})\\
 & \to &  \bigvee_i  (\bigwedge_k \neg C_{ik} \wedge \langle \alpha \rangle (A_i \wedge \bigwedge_j \neg B_{ij} ))\\
 & \to  &  \bigvee_i (\bigwedge_k \neg C_{ik} \wedge \langle \alpha \rangle A_i )\\
 & \to &   \bigvee_i( \bigwedge_k \neg C_{ik} \wedge \langle \beta \rangle A_i )\\
 & \to & \bigvee_i (\bigwedge_{k'} \neg C_{ik'} \wedge \bigwedge_l \neg D_{il}\wedge \langle \beta \rangle A_i) \\
 & \to  & \bigvee_i (\bigwedge_{k'} \neg C_{ik'} \wedge \langle \beta \rangle A_i ) \\
 & \to & \bigvee_i \langle \beta \rangle (A_i \wedge \bigwedge_{k'} \neg C_{ik'}))& \mbox{As $\nvdash A_i \to D_{il}$}\\
 & \to & \bigvee_i  \langle \beta \rangle (A_i \wedge \bigwedge_j \neg B_{ij} \wedge \bigwedge_{k'} \neg C_{ik'} \wedge\bigwedge_l \neg D_{il})\\
 & \to &   \langle \beta \rangle \bigvee_i(A_i \wedge \bigwedge_j \neg B_{ij} \wedge \bigwedge_{k'} \neg C_{ik'} \wedge\bigwedge_l \neg D_{il})\\

 & \to &  \langle \beta \rangle \phi.
\end{array}
\]

For the proof of $\langle \alpha \rangle \phi \to [\beta] \langle
\alpha \rangle \phi  \mbox{ for $\alpha < \beta$}$ it clearly
suffices to show for each $i$ that
\[
\langle \alpha \rangle(A_i \wedge \bigwedge_j \neg B_{ij} \wedge
\bigwedge_k \neg C_{ik}) \to [\beta] \langle \alpha \rangle
\bigvee_i (A_i \wedge \bigwedge_j \neg B_{ij} \wedge \bigwedge_k
\neg C_{ik}).
\]
To establish this we observe that $\vdash \neg C_{ik} \to [\beta]
\neg C_{ik}$ and use large part of our reasoning before:
\[
\begin{array}{llll}
\langle \alpha \rangle(A_i \wedge \bigwedge_j \neg B_{ij} \wedge \bigwedge_k \neg C_{ik}) & \to & \bigwedge_{k} \neg C_{ik} \wedge \langle \alpha \rangle A_i\\
 & \to & \bigwedge_{k} \neg C_{ik} \wedge [\beta] \langle \alpha \rangle A_i\\
  & \to & \bigwedge_{k} [\beta] \neg C_{ik} \wedge [\beta] \langle \alpha \rangle A_i\\
 & \to & [\beta](\bigwedge_{k}  \neg C_{ik}) \wedge [\beta] \langle \alpha \rangle A_i\\
 & \to & [\beta](\bigwedge_{k}  \neg C_{ik}) \wedge [\beta] \langle \alpha \rangle (A_i\wedge \bigwedge_j \neg B_{ij})\\
 & \to &  [\beta]  (\bigwedge_{k}  \neg C_{ik}\wedge \langle \alpha \rangle (A_i\wedge \bigwedge_j \neg B_{ij}))\\
 & \to &  [\beta]  \langle \alpha \rangle (A_i\wedge \bigwedge_j \neg B_{ij} \wedge \bigwedge_{k}  \neg C_{ik})\\
 & \to &  [\beta]  \langle \alpha \rangle \bigvee_i (A_i\wedge \bigwedge_j \neg B_{ij} \wedge \bigwedge_{k}  \neg C_{ik}).\\

\end{array}
\]

Giving an explicit proof for the full version of L\"ob's axiom from
the restricted ones seems to be rather involved thus we choose
another proof strategy.

We observe that the only (!) application of L\"ob's axiom in this
paper is in Lemma \ref{theorem:WormsClosedUnderDiamonds} where it is
actually restricted to worms. Thus, with the restricted version of
L\"ob's axiom we come to the same decision procedure and the same
set of unique WNFs whence the two logics
$\mathsf{w{-}GLP_\Lambda^0}$ and $\gllam^0$ prove the same set
of theorems.
\end{proof}

\section{Acknowledgements}
The first author was supported by the Russian Foundation for Basic Research (RFBR), the Presidential council for support of leading scientific schools, and the Swiss--Russian
cooperation project STCP--CH--RU ``Computational proof theory''.

The second author was supported by the project ``Alternative interpretations of non-classical logics'' (HUM-5844) of the {\em Junta de Andaluc\'ia.}

The third author wishes to thank the participants of the Seminari Cuc in Barcelona for feedback, question, suggestions and discussions: Joan Bagaria, Felix Bou, Ramon Jansana and Enrique Casanovas.

\bibliographystyle{plain}
\bibliography{References}

\begin{thebibliography}{10}

\bibitem{BeklemishevGabelaia:2011:TopologicalCompletenessGLP}
L.~D. Beklemishev and D.~Gabelaia.
\newblock Topological completeness of the provability logic $\mathsf{GLP}$.
\newblock {\em ArXiv}, 1106.5693v1 [math.LO], 2011.
\newblock To appear in \emph{Annals of Pure and Applied Logic}.

\bibitem{BeklemishevJoostenVervoort:2005:FinitaryTreatmentGLP}
L.~D. Beklemishev, J.~J. Joosten, and M.~Vervoort.
\newblock A finitary treatment of the closed fragment of {J}aparidze's
  provability logic.
\newblock {\em Journal of Logic and Computation}, 15:447--463, 2005.

\bibitem{Beklemishev:2004}
L.D. Beklemishev.
\newblock Provability algebras and proof-theoretic ordinals, {I}.
\newblock {\em Annals of Pure and Applied Logic}, 128:103--124, 2004.

\bibitem{Beklemishev:2005:VeblenInGLP}
L.D. Beklemishev.
\newblock Veblen hierarchy in the context of provability algebras.
\newblock In P.~H\'ajek, L.~Vald\'es-Villanueva, and D.~Westerst{\aa}hl,
  editors, {\em Logic, Methodology and Philosophy of Science, Proceedings of
  the Twelfth International Congress}, pages 65--78. Kings College
  Publications, 2005.

\bibitem{Beklemishev:2006}
L.D. Beklemishev.
\newblock The {Worm} principle.
\newblock In Z.~Chatzidakis, P.~Koepke, and W.~Pohlers, editors, {\em Logic
  Colloquium 2002, Lecture Notes in Logic 27}, pages 75--95. ASL Publications,
  2006.

\bibitem{Beklemishev:2010}
L.D. Beklemishev.
\newblock Kripke semantics for provability logic \glp.
\newblock {\em Annals of Pure and Applied Logic}, 161(6):737--744, 2010.

\bibitem{Beklemishev:2010:OnCraigInterpolation}
L.D. Beklemishev.
\newblock On the {Craig} interpolation and the fixed point properties of {GLP}.
\newblock In S.~Feferman et~al., editor, {\em Proofs, Categories and
  Computations. Essays in honor of G. Mints}, Tributes, pages 49--60. College
  Publications, London, 2010.
\newblock Preprint: Logic Group Preprint Series 262, University of Utrecht,
  Dec. 2007.

\bibitem{Beklemishev:2011:SimplifiedArithmeticalCompleteness}
L.D. Beklemishev.
\newblock A simplified proof of the arithmetical completeness theorem for the
  provability logic \glp.
\newblock {\em Trudy Matematicheskogo Instituta imeni V.A. Steklova},
  274(3):32--40, 2011.
\newblock English translation: \emph{Proceedings of the Steklov Institute of
  Mathematics}, 274(3):25--33, 2011.

\bibitem{Boolos:1993:LogicOfProvability}
G.~S. Boolos.
\newblock {\em The {L}ogic of {P}rovability}.
\newblock Cambridge University Press, Cambridge, 1993.

\bibitem{ChagrovRybakov:2003:HowManyVariablesForPSPACE}
{Chagrov, A. V. and Rybakov, M. N.}
\newblock How many variables does one need to prove \textsc{PSpace}-hardness of
  modal logics.
\newblock In {\em Advances in Modal Logic}, volume~4, pages 71--82, 2003.

\bibitem{FefSpe62}
S.~Feferman and C.~Spector.
\newblock Incompleteness along paths in progressions of theories.
\newblock {\em The Journal of Symbolic Logic}, 27:383--390, 1962.

\bibitem{Fernandez:2012:TopologicalCompleteness}
D.~Fern\'andez-Duque.
\newblock The polytopologies of transfinite provability logic.
\newblock {\em ArXiv}, 1207.6595 [math.LO], 2012.

\bibitem{FernandezJoosten:2012:ModelsOfGLP}
D.~Fern\'andez-Duque and J.~J. Joosten.
\newblock Models of transfinite provability logics.
\newblock {\em Journal of Symbolic Logic}, 2012.
\newblock Accepted for publication.

\bibitem{FernandezJoosten:2012:Hyperations}
{Fern\'andez-Duque, D. and Joosten, J. J.}
\newblock {Hyperations, {V}eblen progressions and transfinite iteration of
  ordinal functions}.
\newblock Submitted, May 2012.

\bibitem{FernandezJoosten:2012:KripkeSemanticsGLP0}
{Fern\'andez-Duque, D. and Joosten, J. J.}
\newblock Kripke models of transfinite provability logic.
\newblock In {\em Advances in Modal Logic}, volume~9, pages 185--199. College
  Publications, 2012.

\bibitem{FernandezJoosten:2012:TuringProgressions}
{Fern\'andez-Duque, D. and Joosten, J. J.}
\newblock Turing progressions and their well-orders.
\newblock In {\em How the world computes}, Lecture Notes in Computer Science,
  pages 212--221. Springer, 2012.

\bibitem{FernandezJoosten:2012:WellOrders2}
{Fern\'andez-Duque, D. and Joosten, J. J.}
\newblock {Well-orders in the transfinite {J}aparidze algebra {II}}.
\newblock forthcoming, 2012.

\bibitem{Icard:2008:MastersThesis}
T.~F. Icard~III.
\newblock Models of the polymodal provability logic.
\newblock Master's thesis, Institute for Logic Language and Information, 2008.

\bibitem{Icard:2009:TopologyGLP}
T.~F. Icard~III.
\newblock A topological study of the closed fragment of $\mathsf{GLP}$.
\newblock {\em Journal of Logic and Computation}, 21:683--696, 2011.

\bibitem{Ignatiev:1993:StrongProvabilityPredicates}
K.~N. Ignatiev.
\newblock On strong provability predicates and the associated modal logics.
\newblock {\em The Journal of Symbolic Logic}, 58:249--290, 1993.

\bibitem{Japaridze:1986}
G.K. Japaridze.
\newblock {\em The modal logical means of investigation of provability}.
\newblock PhD thesis, Moscow State University, 1986.
\newblock In {R}ussian.

\bibitem{Joosten:2004:InterpretabilityFormalized}
J.~J. Joosten.
\newblock {\em Intepretability Formalized}.
\newblock PhD thesis, Utrecht University, 2004.

\bibitem{Pakhomov:2011:ComplexityResultsInGLP}
F.~Pakhomov.
\newblock On the complexity of the closed fragment of {J}aparidze's provability
  logic.
\newblock In T.~Bolander, T.~Bra\"{u}ner, S.~Ghilardi, and L.~Moss, editors,
  {\em 9-th Advances in Modal Logic, AiML 2012, Short Presentations}, pages
  56--59, 2012.

\bibitem{Shamkanov:2011:InterpolationProperties}
D.S. Shamkanov.
\newblock Interpolation properties of provability logics {GL} and {GLP}.
\newblock {\em Trudy Matematicheskogo Instituta imeni V.A. Steklova},
  274(3):329--342, 2011.
\newblock English translation: \emph{Proceedings of the Steklov Institute of
  Mathematics}, 274(3):303--316, 2011.

\end{thebibliography}

\end{document}